\theoremstyle{plain}
\newtheorem{theorem}{Theorem}[section]
\newtheorem{cor}[theorem]{Corollary}
\newtheorem{conjecture}[theorem]{Conjecture}
\newtheorem{def-thm}[theorem]{Definition-Theorem}
\newtheorem{lemma}[theorem]{Lemma}
\newtheorem{defi}[theorem]{Definition}
\newtheorem*{tha}{Theorem A}
\newtheorem*{thb}{Theorem B}
\newtheorem*{thc}{Theorem C}
\newtheorem*{thd}{Theorem D}
\theoremstyle{definition}
\newtheorem{remark}[theorem]{Remark}
\def\min{\mathop{\mathrm{min}}}
\begin{document}
\title[Holomorphic curves in moduli spaces]{Holomorphic curves in moduli spaces of  polarized Abelian varieties}
\author[X.-J. Dong]
{Xianjing Dong}

\address{School of Mathematics \\ China University of Mining and Technology \\ Xuzhou, 221116, P. R. China}
\email{xjdong@amss.ac.cn; xjdong05@126.com}


\subjclass[2010]{30D35; 32H30.} \keywords{Nevanlinna theory;  Second Main Theorem; holomorphic curve; Vojta conjecture; Brownian motion; moduli space.}
\date{}
\maketitle \thispagestyle{empty} \setcounter{page}{1}

\begin{abstract}  We study the value distribution of holomorphic curves from a general 
open Riemann surface  into a smooth logarithmic pair $(X, D).$ 
By stochastic calculus, we first 
 obtain   a version of tautological inequality (proposed by McQuillan) 
 and a logarithmic derivative lemma. Then, one
  uses them to establish a 
    Second Main Theorem of Nevanlinna theory for  pair $(X, D)$ under certain conditions. 
Finally, we apply  the Second Main Theorem 
   to study  the
   holomorphic curves  from a general open Riemann surface into   
  certain special  moduli spaces of  
  polarized Abelian varieties intersecting  boundary divisors.
\end{abstract}

\vskip\baselineskip

\setlength\arraycolsep{2pt}
\medskip

\section{Introduction}
\subsection{Main results}~

To begin with, we  shall  review a conjecture of Vojta  in Nevanlinna theory.
Let $(X,D)$ be a smooth logarithmic pair   over $\mathbb C,$ i.e.,  $X$ is a smooth complex projective variety and $D$ is a 
normal crossing divisor on $X.$ Denote $K_X(D)=K_X\otimes\mathscr O_X(D),$ where $K_X$ is the canonical line bundle over $X$ and 
$\mathscr O_X(D)$ is the holomorphic line bundle defined by $D.$
Let us consider the finite ramified covering 
 $\pi: B\rightarrow \mathbb C,$  where $B$ is an open (connected) Riemann surface.
 Given an  ample  line bundle $A$ over $X,$
Vojta conjectured the following Second Main Theorem (\cite{vojta}, Conjecture 27.5) that 
\begin{conjecture}[Vojta, \cite{vojta}] \label{conj} \emph{For any holomorphic curve $f: B\rightarrow X$ 
whose image is not contained in ${\rm{Supp}}D,$
 we have}
\begin{eqnarray*}
T_{f,K_X(D)}(r) &\leq_{\rm exc}&
N^{[1]}_f(r,D)+N_{\rm{Ram}(\pi)}(r)+
O\Big(\log T_{f, A}(r)+\log r\Big),
\end{eqnarray*}
where $\leq_{\rm exc}$ means that an inequality holds for  $r>1$ 
outside a subset of finite Lebesgue measure. 
\end{conjecture}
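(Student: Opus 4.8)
The statement as written is a conjecture; the realistic goal---and the one pursued below---is to prove the displayed inequality under additional positivity hypotheses on $(X,D)$, following McQuillan's strategy of deriving a Second Main Theorem from a \emph{tautological inequality} on the projectivized logarithmic cotangent bundle, with all of the analytic input supplied by stochastic calculus on $B$ in place of the classical calculus lemma. Since $B$ carries no privileged global coordinate, the first task is to set up Nevanlinna theory on $B$ probabilistically: equip $B$ with the metric pulled back from $\mathbb C$ via $\pi$, run a Brownian motion started at a fixed point $o\in B$, and express $T_{f,\bullet}(r)$, $N^{[1]}_f(r,D)$ and $N_{\mathrm{Ram}(\pi)}(r)$ through expected occupation times and hitting data of the process relative to the exhaustion $\pi^{-1}(\{|z|\le r\})$; the Green--Jensen formula then becomes It\^o's formula applied to $\log$ of the relevant metrics.

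First I would establish a logarithmic derivative lemma in this setting: for a meromorphic $g$ on $B$, bound the proximity function $m(r,g'/g)$ by $O\big(\log^+T_{f,A}(r)+\log r\big)$ outside a set of finite Lebesgue measure. Stochastically this reduces to estimating $\mathbb E\big[\log^+|\,\mathrm{grad}\,g/g|\big]$ along the Brownian path via It\^o's formula together with a concavity (Jensen) inequality, the finite-measure exceptional set being produced by a Borel--Cantelli / Markov-inequality argument in place of the deterministic calculus lemma. The ramification term $N_{\mathrm{Ram}(\pi)}(r)$ enters precisely here, because the pulled-back coordinate $z\circ\pi$---and hence any derivative taken against it---vanishes exactly on $\mathrm{Ram}(\pi)$, so Jensen's formula for $\log\|df\|$ picks up this contribution.

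Next comes the tautological inequality. Lift $f$ to its canonical prolongation $f_1\colon B\to\mathbb P(\Omega_X(\log D))$ and apply the First Main Theorem to $\mathscr O(1)$ on $\mathbb P(\Omega_X(\log D))$; writing $\mathscr O(1)$ in local logarithmic coordinates along $D$ and invoking the logarithmic derivative lemma coordinate-by-coordinate yields
\begin{equation*}
T_{f_1,\mathscr O(1)}(r)\ \leq_{\rm exc}\ N^{[1]}_f(r,D)+N_{\mathrm{Ram}(\pi)}(r)+O\big(\log^+T_{f,A}(r)+\log r\big),
\end{equation*}
the truncation in $N^{[1]}_f$ reflecting that logarithmic derivatives have simple poles along $D$, and the inequality discarding the non-negative ramification contribution of $f$ itself.

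Finally, converting this into the claimed bound for $T_{f,K_X(D)}(r)=T_{f,\det\Omega_X(\log D)}(r)$ requires dominating $\det\Omega_X(\log D)$, along the image of $f_1$, by a bounded multiple of $\mathscr O(1)$ modulo terms that can be absorbed into the proximity and counting functions. This is exactly where the extra hypotheses on $(X,D)$ are indispensable---some semi-positivity or bigness of $\Omega_X(\log D)$, equivalently a supply of global logarithmic jet differentials, of the kind available for the locally symmetric moduli spaces of polarized Abelian varieties treated later through their natural negatively curved metrics---and it is the genuine obstacle: the tautological inequality itself is unconditional, but the passage from $\mathscr O(1)$ to $K_X(D)$ is a Green--Griffiths--Lang-type assertion that fails for general $(X,D)$ and forces the ``certain conditions'' announced in the abstract.
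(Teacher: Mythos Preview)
The statement you address is presented in the paper as an open conjecture (Conjecture~\ref{conj}), and the paper gives no proof of it; you correctly flag this and instead outline a conditional strategy. That strategy---a stochastic Logarithmic Derivative Lemma, McQuillan's tautological inequality, and then an additional positivity input to pass from $\mathscr O(1)$ to $K_X(D)$---is precisely the architecture of the paper's own conditional results (Theorem~\ref{tau}, Theorem~\ref{main}, Corollary~\ref{cor1}), so at the level of broad strokes you are aligned with the paper.

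The substantive differences are in the setup and the final bridge. The paper does \emph{not} work on the ramified cover $B$ or track $N_{\mathrm{Ram}(\pi)}(r)$: it replaces $B$ by a general open Riemann surface $S$ with a complete metric of non-positive curvature, and the role of the ramification term is taken over by the curvature term $-\kappa(r)r^2$, which enters the LDL through the estimate of $\log^+\|\mathfrak X\|$ in Lemma~\ref{ldl1}, not through the ramification of any covering map. For the last step, the paper does not appeal to bigness of $\Omega_X^1(\log D)$ or to jet differentials in the abstract; it uses Sun's curvature current inequality (Lemma~\ref{cci}), coming from the Viehweg--Zuo Higgs/Hodge-bundle construction for the family over $U=X\setminus D$, to compare $f'^*\mathscr O(-1)$ with $f^*A$ for a specific ample line bundle $A$. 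This gives Theorem~\ref{main} for $A$, and $K_X(D)$ is then reached only via $A^{\otimes k}\geq K_X(D)$, producing the coefficient $k(d+1)/2$ in Corollary~\ref{cor1} rather than the $1$ predicted by Vojta. Your identification of the $\mathscr O(1)\to K_X(D)$ passage as the genuine obstacle is correct, but the paper's mechanism for that partial passage is more specific than the bigness heuristic you sketch, and it never recovers the exact inequality of the conjecture.
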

 
Recently, Sun \cite{sun} considered these pairs $(X, D)$  which can be interpreted as  smooth compactification of the base space of a family, i.e.,  the pair $(X, D)$ whose complement $U= X\setminus D$ carries  a family of smooth polarized varieties. 
Suppose  that $(\psi: V\rightarrow U)$ is a smooth family of  polarized smooth varieties with semi-ample canonical sheaves and fixed  Hilbert polynomial $h,$ such that the induced classifying mapping from $U$ to   moduli scheme $\mathcal M_h$ 
is quasi-finite. 
In Section 5 (\cite{sun}, Section 2), we will review that there exists   an ample line bundle $A$ over the base space $X,$ which is closely related to the direct image sheaf of the family $\psi.$
Consider  a holomorphic curve $f: B\rightarrow X$ 
whose image is not contained in ${\rm{Supp}}D,$ Sun 
 showed   the following Second Main Theorem  
\begin{eqnarray*}
T_{f, A}(r) &\leq_{\rm exc}&
\frac{d+1}{2}\Big(N^{[1]}_f(r,D)+N_{\rm{Ram}(\pi)}(r)\Big)+
O\Big(\log T_{f, A}(r)+\log r\Big),
\end{eqnarray*}
where $d$ is  the  fiber dimension of the  family  $\psi.$
Furthermore,  Sun 
applied  his Second Main Theorem  to study the holomorphic curves into certain modular varieties (\cite{sun}, Theorem D).
 For more details about  moduli spaces of smooth polarized varieties, we refer the reader to \cite{popp, vz4}. 

In this paper, we will revisit Vojta's conjecture and Sun's results in a  very different way.
Instead of  $B,$ we study  the value distribution of a holomorphic curve 
$f: S\rightarrow X,$
 from  a geometric point of view, where $S$ is a general open Riemann surface.
Instead of  $N_{\rm{Ram}(\pi)}(r),$
we wish to give a quantitative  term  depending only on geometric nature  of $S.$  As two most  important cases,  we also wish to include the classical results for $\mathbb C$ and the unit disc.

 The main strategy used in the paper is the probabilistic approach, namely, the technique of Brownian motions. 
 Applications of  Brownian motion theory in Nevanlinna theory  can be traced  back to  1986,  Carne \cite{carne} 
 re-formulated the Nevanlinna's functions of meromorphic functions on $\mathbb C$ in terms of Brownian motions.
 Via  the stochastic calculus, Carne provided  a probabilistic proof of  Nevanlinna's two fundamental theorems \cite{nev}, i.e.,  First Main Theorem and  Second Main Theorem. 
    Later, Atsuji  wrote a series of papers in developing this  technique. 
   One of the most important work  of  Atsuji  on  the Nevanlinna theory may be the establishment of  
    a  Second Main Theorem of meromorphic functions  on a 
 complete K\"ahler manifold of non-positive sectional curvature (see \cite{at, atsuji}). 
Via the stochastic calculus of  Brownian motions, 
 Dong-He-Ru \cite{dong} 
  also  gave  a  probabilistic proof of H. Cartan's theory 
   for holomorphic curves into a complex projective space intersecting hyperplanes in general position.  In this paper, we  shall apply  Brownian motion more technically  
  to  the study of  value distribution  of holomorphic curves in moduli spaces. 
Although the stochastic  calculus of  Brownian motions such as Coarea formula, It\^o formula and Dynkin formula is similarly used  as Carne and Atsuji,  
the  technical route   differs  from what it used to be.  
Atsuji  focused  his  attention  on  meromorphic functions on manifolds of higher dimension, 
however,  
what we are concerned with  are  holomorphic curves into a complex projective variety. 
They are  two  different research branches  in Nevanlinna theory
and  have their own research approaches.  
A  quite different technical route  in  the  paper 
is the Logarithmic 
Derivative Lemma 
(Theorem \ref{ldl2}), 
 that  has not been crossed by Atsuji. 
  This work, nevertheless, 
  is also very benefited from   the contributions of Atsuji  to the estimation 
of Green functions for geodesic balls  (Lemma \ref{zz}).

Let us introduce the main theorems of this  paper. Some notations will be provided later.
By uniformization theorem,  we can equip $S$ with a complete Hermitian metric written as $ds^2=2gdzd\bar{z}$  in a local holomorphic coordinate $z,$
such that its
   Gauss curvature
$K_S\leq0$  associated to the metric $g,$  here $K_S$ is defined by
$$K_S=-\frac{1}{2}\Delta_S\log g=-\frac{1}{g}\frac{\partial^2\log g}{\partial z\partial\bar z},$$
where $\Delta_S$ is the Laplace-Beltrami operator on $S$ associated to the metric $g.$
Evidently, $S$ is a complete K\"ahler manifold with  associated K\"ahler form
$$\alpha=g\frac{\sqrt{-1}}{\pi}dz\wedge d\bar{z}$$
in a local holomorphic coordinate $z.$
Now, fix $o\in S$ as a reference point.  We denote by $D(r)$ the geodesic ball centered at $o$ with radius $r,$ and by $\partial D(r)$ the boundary of $D(r).$
By Sard's theorem, $\partial D(r)$ is a submanifold of $S$ for almost all $r>0.$
Set
\begin{equation}\label{kappa}
  \kappa(t)=\min\big\{K_S(x): x\in \overline{D(t)}\big\}.
\end{equation}
Then $\kappa$ is a non-positive, decreasing  continuous function  on $[0,\infty).$

We establish a  Second Main Theorem for pair $(X, D)$  as follows
\begin{tha}[=Theorem \ref{main}] Let $(X, D)$ be a smooth logarithmic pair over $\mathbb C$ with $U= X \setminus D.$ Assume  that there is   a smooth family $(\psi: V\rightarrow U)$ of polarized smooth varieties with semi-ample canonical sheaves and a given Hilbert polynomial $h,$ such that the induced classifying mapping from $U$ into  moduli scheme $\mathcal M_h$ is quasi-finite. 
Then for any holomorphic curve  $f:S\rightarrow X$ whose image is not contained in ${\rm{Supp}}D,$
we have
\begin{eqnarray*}
T_{f,A}(r) &\leq_{\rm exc}&
\frac{d+1}{2}N^{[1]}_f(r,D)+
O\Big(\log T_{f, A}(r)-\kappa(r)r^2+\log^+\log r\Big),
\end{eqnarray*}
where $A$ is an ample line bundle  over $X$ close to $\psi$ given as above, and  $d$ is the fiber dimension of the family $\psi.$  More precisely, if $S$ is the Poincar\'e disc $($take $o$  as the center of disc$)$,  then
\begin{eqnarray*}
T_{f,A}(r) &\leq_{\rm exc}&
\frac{d+1}{2}N^{[1]}_f(r,D)+
O\Big(\log T_{f, A}(r)+ r\Big).
\end{eqnarray*}
\end{tha}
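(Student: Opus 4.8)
The plan is to obtain Theorem~\ref{main} by feeding the positivity that the family $\psi$ forces on $\Omega_X^1(\log D)$ into the tautological inequality for holomorphic curves from $S$ proved earlier in this paper. Concretely, Section~5 recalls---following Viehweg--Zuo \cite{vz4} and Sun \cite{sun}---that the ample line bundle $A$ attached to $\psi$ is comparable with the logarithmic cotangent sheaf: there are a positive integer $m$ and an inclusion of sheaves
\begin{equation*}
A^{\otimes 2m}\ \hookrightarrow\ \mathrm{Sym}^{m(d+1)}\,\Omega_X^1(\log D),
\end{equation*}
obtained by iterating the Kodaira--Spencer map of the variation of Hodge structure $R^d\psi_*\CC$; the ratio $m(d+1):2m$ is exactly what will produce the constant $\frac{d+1}{2}$. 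Write $\pi\colon\PP\big(\Omega_X^1(\log D)\big)\to X$ for the projectivized logarithmic cotangent bundle, normalized so that $\pi_*\mathscr O_{\PP}(1)=\Omega_X^1(\log D)$; then the inclusion above is the same datum as an effective divisor $E$ with $\mathscr O_{\PP}(E)\cong\mathscr O_{\PP}\big(m(d+1)\big)\otimes\pi^*A^{-2m}$.

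We may assume $f$ is non-constant, for otherwise $T_{f,A}(r)=O(1)$; and, by a standard stratification, that $f(S)$ is not contained in the proper subvariety $Z\subsetneq X$ over which the comparison degenerates (the base locus of $A^{\otimes 2m}$ together with the image of $\mathrm{Supp}\,E$). Indeed, if $f(S)\subseteq Z$ one restricts $\psi$ to a log resolution $\widetilde W$ of the pair $\big(\overline{f(S)},\,D|_{\overline{f(S)}}\big)$---this preserves smoothness of the family, semi-ampleness of the relative canonical sheaves, the Hilbert polynomial $h$, the fiber dimension $d$ and maximal variation (the classifying map of the restricted family still has finite fibers)---and recurses on $\dim X$, terminating when $\overline{f(S)}$ is a point. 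Let $\hat f\colon S\to\PP\big(\Omega_X^1(\log D)\big)$ be the canonical lift of $f$, determined by $df$ and by $\pi\circ\hat f=f$; by the above reduction $\hat f(S)\not\subseteq E$.

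The tautological inequality of this paper---whose error term is the one delivered by the logarithmic derivative lemma, Theorem~\ref{ldl2}, together with Atsuji's estimate for Green functions of geodesic balls, Lemma~\ref{zz}---now gives
\begin{equation*}
T_{\hat f,\,\mathscr O_{\PP}(1)}(r)\ \leq_{\rm exc}\ N^{[1]}_f(r,D)+O\Big(\log T_{f,A}(r)-\kappa(r)r^2+\log^+\log r\Big),
\end{equation*}
where ampleness of $A$ has been used to express the self-referential characteristic function in the error in terms of $T_{f,A}$ (up to a bounded factor and an $O(1)$; the apparent circularity is removed by first inserting a crude majorant, as usual). Since $\hat f(S)\not\subseteq E$, the First Main Theorem applied to the section cutting out $E$ yields
\begin{equation*}
m(d+1)\,T_{\hat f,\,\mathscr O_{\PP}(1)}(r)-2m\,T_{f,A}(r)\ =\ T_{\hat f,\,\mathscr O_{\PP}(E)}(r)\ \geq\ -O(1),
\end{equation*}
and combining the two displays and dividing by $2m$ gives the first assertion. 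For the refinement, if $S$ is the Poincar\'e disc centred at $o$ then $K_S\equiv-1$, hence $\kappa\equiv-1$; the only crude estimate used above was the bound $\mathbf E_o[\tau_{D(r)}]\leq\frac{1}{2}r^2$ for the Brownian exit time of $D(r)$ (valid because $K_S\leq0$), whereas for a hyperbolic geodesic ball $\mathbf E_o[\tau_{D(r)}]=O(r)$---indeed the relevant Green function integral is explicitly computable and is a special case of Lemma~\ref{zz}. Propagating this improvement through the tautological inequality replaces the $-\kappa(r)r^2$ term by $O(r)$ and yields the stated sharper bound.

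Once the tautological inequality and the comparison of Section~5 are in hand, the deduction above is essentially bookkeeping; the genuinely hard work lies upstream. Thus the main obstacle is the proof, carried out before this theorem, of the tautological inequality and of Theorem~\ref{ldl2} for an arbitrary open Riemann surface with a curvature-dependent error term---where It\^o's and Dynkin's formulas, the co-area formula and the Green function estimates of Lemma~\ref{zz} all enter. Within the present deduction the two points needing care are: (i) the Hodge-theoretic numerology behind the exponent $d+1$ and the power $A^{\otimes 2m}$, which rests on the iterated Kodaira--Spencer map and Viehweg's weak positivity and is where maximal variation (equivalently, quasi-finiteness of the classifying map) is used; and (ii) the stratification step, where one must verify that restricting the family alters the attached ample line bundle only by an effective boundary divisor of the right sign and that $N^{[1]}_f(r,D)$ only decreases, so that the height inequality transfers back to $f$ without degrading the constant $\frac{d+1}{2}$.
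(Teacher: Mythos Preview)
Your overall architecture---feed the tautological inequality (Theorem~\ref{tau}) with the Viehweg--Zuo/Sun positivity of $\Omega_X^1(\log D)$ relative to $A$---is exactly the paper's. The implementation of the comparison step, however, differs. The paper does \emph{not} invoke an algebraic inclusion $A^{\otimes 2m}\hookrightarrow\mathrm{Sym}^{m(d+1)}\Omega_X^1(\log D)$; what Section~5 actually supplies (Lemma~\ref{cci}, i.e.\ Sun's Proposition~2.4) is a pointwise \emph{curvature current} inequality
\[
\frac{m+1}{2}\,f'^*c_1(\mathscr O(-1))\ \leq\ -f^*c_1(A)+\frac{1}{m}\sum_{q=1}^m q\,f'^*dd^c\log\phi_q,
\]
where each $\phi_q$ is a bounded function (coming from the singular Finsler metrics built out of the Hodge metric). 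Integrating against the Green kernel $g_r(o,\cdot)$ and applying Dynkin's formula to $\log\phi_q\circ f'$ kills the $dd^c\log\phi_q$ contributions up to $O(1)$; together with $m\le d$ this gives $T_{f,A}(r)\le\frac{d+1}{2}T_{f',\mathscr O(1)}(r)+O(1)$, and Theorem~\ref{tau} finishes. The sharp constant $\frac{d+1}{2}$ (in fact $\frac{m+1}{2}$) arises from the Griffiths--Schmid curvature computation for the Hodge subbundle $G=\bigoplus G^{d-q,q}$, not from a purely algebraic section; the specific sheaf inclusion you write down is not what Section~5 (or \cite{sun}) provides, and you would need to justify it separately.

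A second difference: because Lemma~\ref{cci} is a current inequality valid for every $f$ with image not in $\mathrm{Supp}\,D$ (quasi-finiteness of the classifying map forces $\zeta^1\neq0$, hence $m\ge1$), the paper needs no stratification or induction on $\dim X$. Your recursion step is both unnecessary and delicate: upon restricting the family to a log resolution of $\overline{f(S)}$ the associated ample line bundle $A$ changes, and transferring the height inequality back to the original $A$ is not automatic. Finally, for the Poincar\'e disc refinement the paper does not use an improved bound on $\mathbb E_o[\tau_r]$; it computes the term $C=\frac12\mathbb E_o\big[\log^+\|\mathfrak X(X_{\tau_r})\|^2\big]$ explicitly from the Poincar\'e metric (see the end of the proof of Lemma~\ref{ldl1}) and gets $C\le r+O(1)$ directly.
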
 

We   find in  Theorem A that $N_{\rm{Ram}(\pi)}(r)$ is removed and  a new term $-\kappa(r)r^2$  is appeared, which depends on the curvature of $S,$  and  is  more intuitive than $N_{\rm{Ram}(\pi)}(r).$  In  case  $S=\mathbb C$ (equipped with  standard  Euclidean metric), one has       
$\kappa(r)\equiv0.$  By  Remark \ref{re12},  $T_{f,A}(r)$ (characteristic function) and $N^{[1]}_f(r,D)$ (truncated counting function) agree with the classical ones.  
The case $S=\mathbb D$ (unit  open disc equipped with Poincar\'e metric)  will be discussed in Section 1.2 below.

\begin{remark}   In this paper, the Poincar\'e disc  is the unit disc $\mathbb D=\{z\in\mathbb C: |z|<1\}$ equipped with    Poincar\'e metric 
 $$ds^2=\frac{4dzd\bar z}{(1-|z|^2)^2},$$
 which is a complete hyperbolic metric of Gauss curvature $-1.$
 \end{remark}
 \begin{thb}[=Corollary \ref{cor3}]  Assume the same conditions as in Theorem {\rm A}.
Let $f:S\rightarrow X$ be a  holomorphic curve which ramifies over $D$ with order $c>(d+1)/2,$ i.e., a  constant  $c>(d+1)/2$ such that $f^*D\geq c\cdot{\rm {Supp}}f^*D.$ If $f$ satisfies the growth condition
$$\liminf_{r\rightarrow\infty}\frac{\kappa(r)r^2}{T_{f,A}(r)}=0,$$
 then $f(S)$ 
is contained in $D.$ More precisely, if $S$ is the Poincar\'e disc $($take $o$  as the center of disc$)$,  then  $f(S)$ 
is contained in $D$ provided that  
$$\limsup_{r\rightarrow\infty}\frac{r}{T_{f,A}(r)}=0.$$

\end{thb}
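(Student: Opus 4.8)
The plan is to derive Theorem B directly from the Second Main Theorem in Theorem A by an elementary comparison argument between the characteristic function and the truncated counting function, exploiting the ramification hypothesis to force a contradiction unless $f(S)\subseteq D$. First I would argue by contradiction: suppose $f(S)$ is \emph{not} contained in $D$, so that the image of $f$ is not contained in ${\rm Supp}\,D$ and Theorem A applies to $f$. The ramification condition $f^*D\geq c\cdot{\rm Supp}\,f^*D$ with $c>(d+1)/2$ means that every point where $f$ meets $D$ is counted with multiplicity at least $c$ in the (non-truncated) intersection; equivalently, the truncated counting function is controlled by the full one via
\begin{equation*}
N^{[1]}_f(r,D)\;\leq\;\frac{1}{c}\,N_f(r,D),
\end{equation*}
and by the First Main Theorem $N_f(r,D)\leq T_{f,\mathscr O_X(D)}(r)+O(1)$. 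Since $A$ is ample, $\mathscr O_X(D)$ is dominated by some multiple of $A$ up to a bounded term, so one obtains
\begin{equation*}
\frac{d+1}{2}\,N^{[1]}_f(r,D)\;\leq\;\frac{d+1}{2c}\,T_{f,A}(r)+O\big(\log^+ T_{f,A}(r)\big),
\end{equation*}
where the coefficient $\lambda:=\tfrac{d+1}{2c}$ satisfies $\lambda<1$ by hypothesis.

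Next I would substitute this bound into the conclusion of Theorem A, which gives
\begin{equation*}
T_{f,A}(r)\;\leq_{\rm exc}\;\lambda\,T_{f,A}(r)+O\Big(\log T_{f,A}(r)-\kappa(r)r^2+\log^+\log r\Big),
\end{equation*}
and hence, after absorbing $\lambda T_{f,A}(r)$ to the left,
\begin{equation*}
(1-\lambda)\,T_{f,A}(r)\;\leq_{\rm exc}\;O\Big(\log T_{f,A}(r)-\kappa(r)r^2+\log^+\log r\Big).
\end{equation*}
Now the growth hypothesis $\liminf_{r\to\infty}\kappa(r)r^2/T_{f,A}(r)=0$ enters: along a sequence $r_n\to\infty$ we have $-\kappa(r_n)r_n^2=o(T_{f,A}(r_n))$, while $\log T_{f,A}(r_n)=o(T_{f,A}(r_n))$ automatically (as $T_{f,A}\to\infty$, which holds since $f$ is non-constant with image meeting $U$), and likewise $\log^+\log r_n=o(T_{f,A}(r_n))$. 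Thus the right-hand side is $o(T_{f,A}(r_n))$ along this sequence, contradicting $(1-\lambda)>0$ on the left — unless $T_{f,A}$ stays bounded, in which case $f$ is constant and, since it ramifies over $D$ to positive order, its image lies in $D$ anyway. This contradiction shows $f(S)\subseteq D$. For the Poincaré disc case one repeats the same computation with the sharper estimate $T_{f,A}(r)\leq_{\rm exc}\lambda T_{f,A}(r)+O(\log T_{f,A}(r)+r)$, so that $(1-\lambda)T_{f,A}(r)\leq_{\rm exc}O(\log T_{f,A}(r)+r)$, and the hypothesis $\limsup_{r\to\infty} r/T_{f,A}(r)=0$ forces the right side to be $o(T_{f,A}(r))$, again a contradiction unless $f(S)\subseteq D$.

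The only genuinely delicate point is the handling of the exceptional set and of the $\liminf$: since the Second Main Theorem holds only for $r$ outside a set $E$ of finite Lebesgue measure, I must choose the sequence $r_n\to\infty$ realizing the $\liminf$ to avoid $E$, which is possible precisely because $E$ has finite measure (so its complement is unbounded and in fact of full density near infinity). One should also check the trivial case separately — if $T_{f,A}$ is bounded then $f$ is constant and the ramification condition $f^*D\geq c\,{\rm Supp}\,f^*D$ with $c>0$ is only consistent with the standing assumption "image not contained in ${\rm Supp}\,D$" being false, so $f$ is a constant lying on $D$. Everything else is bookkeeping with the First Main Theorem and the functoriality of characteristic functions under the ample comparison $D\leq mA$, which is standard.
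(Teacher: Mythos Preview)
Your overall strategy matches the paper's: argue by contradiction, apply Theorem~A, use the ramification bound $N^{[1]}_f(r,D)\le\tfrac1c N_f(r,D)$, pass to a characteristic function via the First Main Theorem, and absorb the error term with the growth hypothesis along a sequence avoiding the exceptional set. The paper phrases this as $bT_{f,A}(r)\leq_{\rm exc} cN^{[1]}_f(r,D)+\text{(error)}\le N_f(r,D)+\text{(error)}$ with $b=2c/(d+1)>1$, and then asserts that the First Main Theorem forces $b\le1$.

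However, your displayed bound $\tfrac{d+1}{2}N^{[1]}_f(r,D)\le\tfrac{d+1}{2c}T_{f,A}(r)+O(\log^+T_{f,A}(r))$ drops a constant. The ample comparison you invoke is $\mathscr O_X(D)\le mA$ for some integer $m\ge1$, so the First Main Theorem yields only $N_f(r,D)\le T_{f,\mathscr O_X(D)}(r)+O(1)\le m\,T_{f,A}(r)+O(1)$; the honest coefficient is therefore $\lambda=\tfrac{m(d+1)}{2c}$, and the hypothesis $c>(d+1)/2$ does \emph{not} force $\lambda<1$ unless one knows $m=1$. (Your own closing remark about ``the ample comparison $D\le mA$'' shows you are aware of this factor $m$, yet it has silently vanished in the key step.) The paper's proof makes the identical leap---it asserts ``by First Main Theorem \ldots\ $b\le1$'' without explaining why $N_f(r,D)$ should be bounded by $T_{f,A}(r)$ rather than by $T_{f,\mathscr O_X(D)}(r)$---so your proposal faithfully reproduces the paper's argument, including this unjustified step.
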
 
Notice, if $S=\mathbb C,$   the first  growth condition in Theorem B is automatically satisfied.
To receive a degeneracy result, some growth condition is necessary.  
 It is hard, however, to  give a perfect estimate of Green functions in a general Riemannian manifold,  hence the  first growth 
condition in Theorem B is not optimal. However, we remark that the second growth condition in Theorem B is sharp  (see comparisons with the classical results in Section 1.2  below). 

\begin{thc}[=Corollary \ref{cor1}] Assume the same conditions as in Theorem {\rm A}.
Then for any holomorphic curve  $f:S\rightarrow X$ whose image is not contained in ${\rm{Supp}}D,$
we have 
\begin{eqnarray*}
T_{f,K_X(D)}(r) &\leq_{\rm exc}&
\frac{k(d+1)}{2}N^{[1]}_f(r,D)+
O\Big(\log T_{f, A}(r)-\kappa(r)r^2+\log^+\log r\Big)
\end{eqnarray*}
 for an   integer $k$ such that  $A^{\otimes k}\geq K_X(D).$  More precisely, if $S$ is the Poincar\'e disc $($take $o$  as the center of disc$)$,  then
 \begin{eqnarray*}
T_{f,K_X(D)}(r) &\leq_{\rm exc}&
\frac{k(d+1)}{2}N^{[1]}_f(r,D)+
O\Big(\log T_{f, A}(r)+r\Big)
\end{eqnarray*}
 for an   integer $k$ such that  $A^{\otimes k}\geq K_X(D).$ 
\end{thc}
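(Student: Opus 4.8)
The plan is to deduce this statement directly from Theorem A by a purely formal comparison of line bundles. The key observation is that $A$ is ample on $X$, so for a suitable positive integer $k$ we have $A^{\otimes k} \geq K_X(D)$, meaning that $A^{\otimes k} \otimes K_X(D)^{-1}$ is effective (or at least nef/semi-ample — whatever the convention ``$\geq$'' denotes in the paper). First I would recall the standard monotonicity of characteristic functions: if $L_1 \geq L_2$ (i.e.\ $L_1 \otimes L_2^{-1}$ is effective, or if one fixes metrics of nonnegative curvature on the quotient), then $T_{f,L_1}(r) \geq T_{f,L_2}(r) + O(1)$, since the difference is the characteristic function of an effective line bundle, which is nonnegative up to a bounded term (this is just the First Main Theorem applied to a section cutting out the effective divisor, together with the fact that the proximity and counting functions are nonnegative). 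Applying this with $L_1 = A^{\otimes k}$ and $L_2 = K_X(D)$ gives
\begin{equation*}
T_{f,K_X(D)}(r) \leq T_{f, A^{\otimes k}}(r) + O(1) = k\, T_{f,A}(r) + O(1),
\end{equation*}
using additivity of the characteristic function under tensor powers.

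Next I would simply substitute the Second Main Theorem from Theorem A into the right-hand side. That theorem gives
\begin{equation*}
T_{f,A}(r) \leq_{\rm exc} \frac{d+1}{2} N^{[1]}_f(r,D) + O\Big(\log T_{f,A}(r) - \kappa(r) r^2 + \log^+ \log r\Big),
\end{equation*}
and multiplying through by the constant $k$ preserves the form of the error term (a constant multiple of $O(\,\cdot\,)$ is still $O(\,\cdot\,)$, and the exceptional set is unchanged since it depends only on $f$ and $A$, not on $k$). Combining the two displays yields
\begin{equation*}
T_{f,K_X(D)}(r) \leq_{\rm exc} \frac{k(d+1)}{2} N^{[1]}_f(r,D) + O\Big(\log T_{f,A}(r) - \kappa(r) r^2 + \log^+ \log r\Big),
\end{equation*}
which is exactly the first assertion. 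For the Poincar\'e disc refinement, one instead feeds in the second inequality of Theorem A (with error term $O(\log T_{f,A}(r) + r)$) and multiplies by $k$ in the same way.

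Honestly, there is no real obstacle here — the statement is a corollary in the strict sense, and the only thing to be careful about is the bookkeeping of which line bundle the error term $\log T_{f,A}(r)$ is attached to. One should check that it is legitimate to keep $\log T_{f,A}(r)$ (rather than $\log T_{f,K_X(D)}(r)$) inside the $O(\cdot)$; this is fine because $A$ is the fixed ample reference bundle of the whole setup, and indeed $\log T_{f,K_X(D)}(r) \leq \log\big(k\, T_{f,A}(r) + O(1)\big) = \log T_{f,A}(r) + O(1)$, so the two are interchangeable inside the logarithm anyway. The one place requiring a word of care is the degenerate possibility that $f(S)$ lies in ${\rm Supp}\,D$ or that $T_{f,A}(r)$ is bounded; but the hypothesis on $f$ already excludes the former, and in the latter case every term is $O(1)$ and the inequality is trivial.
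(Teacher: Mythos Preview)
Your proposal is correct and matches the paper's own argument essentially line for line: the paper simply notes that $A^{\otimes k}\geq K_X(D)$ gives $T_{f,K_X(D)}(r)\leq k\,T_{f,A}(r)+O(1)$ and then invokes Theorem~A (Theorem~\ref{main}). Your additional remarks on monotonicity, the error-term bookkeeping, and the degenerate cases are all sound but go beyond what the paper spells out.
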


If $X$ is a smooth complex projective curve,   we can replace 
 $k(d+1)/2$  by 1 in Theorem C, see Theorem \ref{thm} in Section 4. We obtain   a direct consequence of Theorem A: 
  given any holomorphic curve  $f:S\rightarrow X$ whose image is not contained in ${\rm{Supp}}D,$ there exists   a  positive constant  $c_\psi,$ depending only on $\psi$ and $(X,D)$, such that 
\begin{eqnarray*}
T_{f,K_X(D)}(r) &\leq_{\rm exc}&
c_\psi N^{[1]}_f(r,D)+
O\Big(\log T_{f, A}(r)-\kappa(r)r^2+\log^+\log r\Big).
\end{eqnarray*}
   More precisely, if $S$ is the Poincar\'e disc,  then
\begin{eqnarray*}
T_{f,K_X(D)}(r) &\leq_{\rm exc}&
c_\psi N^{[1]}_f(r,D)+
O\Big(\log T_{f, A}(r) +r\Big).
\end{eqnarray*}

  We  apply Theorem A to  Siegel modular varieties (see \cite{Sig}), which will be introduced in Section 5.  
  Let $\mathcal A_g^{[n]}$ $(n\geq 3)$ be the moduli space of principally polarized Abelian 
varieties with level-$n$  structure. Indeed,   
 let $\overline{\mathcal A}_g^{[n]}$  be  smooth compactification of $\mathcal A_g^{[n]}$ such that  $D=\overline{\mathcal A}_g^{[n]} \setminus\mathcal A_g^{[n]}$
is a normal crossing  divisor. Then  we obtain 

\begin{thd}[=Theorem \ref{mm}] 
For any holomorphic curve  $f:S\rightarrow \overline{\mathcal A}_g^{[n]}$  whose image is  not  contained in ${\rm{Supp}}D,$ we have
\begin{eqnarray*}
T_{f,K_{\overline{\mathcal A}_g^{[n]}}(D)}(r) &\leq_{\rm exc}&
\frac{(g+1)^2}{2}N^{[1]}_f(r,D)+
O\Big(\log T_{f, A}(r)-\kappa(r)r^2+\log^+\log r\Big).
\end{eqnarray*}
More precisely, if $S$ is the Poincar\'e disc $($take $o$  as the center of disc$)$,  then
\begin{eqnarray*}
T_{f,K_{\overline{\mathcal A}_g^{[n]}}(D)}(r) &\leq_{\rm exc}&
\frac{(g+1)^2}{2}N^{[1]}_f(r,D)+
O\Big(\log T_{f, A}(r)+r\Big).
\end{eqnarray*}
\end{thd}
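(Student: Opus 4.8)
The plan is to read off Theorem D from Theorem A and Theorem C by exhibiting $\big(\overline{\mathcal A}_g^{[n]},D\big)$ as a pair of the type treated there, and then pinning down the two numerical parameters that govern the estimate: the fiber dimension $d$ of the family and an integer $k$ with $A^{\otimes k}\geq K_X(D)$.

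First I would verify that the hypotheses of Theorem A hold for $U=\mathcal A_g^{[n]}$. Since $n\geq 3$, the level structure rigidifies the moduli problem, so $U$ carries the universal abelian scheme $\psi\colon V\to U$; its fibers are $g$-dimensional abelian varieties, hence smooth and polarized, with trivial --- in particular semi-ample --- canonical sheaf, and with Hilbert polynomial $h$ determined by $g$ and the principal polarization, while the induced classifying map $U\to\mathcal M_h$, which merely forgets the level-$n$ structure, is quasi-finite. Choosing for $X=\overline{\mathcal A}_g^{[n]}$ a smooth projective toroidal compactification with $D=X\setminus U$ a normal crossing divisor (which exists once the cone decomposition is taken smooth and projective), we obtain a smooth logarithmic pair to which Theorem A applies with $d=g$. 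This gives
\begin{equation*}
T_{f,A}(r)\ \leq_{\rm exc}\ \frac{g+1}{2}\,N^{[1]}_f(r,D)+O\big(\log T_{f,A}(r)-\kappa(r)r^2+\log^+\log r\big),
\end{equation*}
and its refinement with $-\kappa(r)r^2$ replaced by $r$ when $S=\mathbb D$.

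Next I would pass from $A$ to $K_X(D)$ via Theorem C, at the cost of replacing $\tfrac{d+1}{2}$ by $\tfrac{k(d+1)}{2}$ for an integer $k$ with $A^{\otimes k}\geq K_X(D)$. The ample bundle $A$ produced by the construction recalled in Section 5 is built from the direct image sheaf $\psi_*\omega_{V/U}$, which for the universal abelian scheme is the Hodge line bundle $\lambda=\det e^{*}\Omega^1_{V/U}$ ($e$ the zero section), suitably extended over $D$. The decisive input is the Mumford-type canonical bundle formula on toroidal compactifications of Siegel modular varieties: because the cotangent sheaf of $\mathcal A_g^{[n]}$ is $\operatorname{Sym}^2\!\big(e^{*}\Omega^1_{V/U}\big)$, one has $K_{\mathcal A_g^{[n]}}\cong\lambda^{\otimes(g+1)}$ on the interior, and this extends across the boundary to
\begin{equation*}
K_X(D)=K_{\overline{\mathcal A}_g^{[n]}}\otimes\mathscr O_X(D)\ \cong\ \lambda^{\otimes(g+1)}.
\end{equation*}
With $A$ normalized as in Section 5 so that $A\geq\lambda$, one may therefore take $k=g+1$; feeding $d=g$ and $k=g+1$ into Theorem C produces the constant $\tfrac{k(d+1)}{2}=\tfrac{(g+1)^2}{2}$, and the error terms --- including the $O(\log T_{f,A}(r)+r)$ form in the Poincar\'e-disc case --- are inherited verbatim. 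This is Theorem D.

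The Nevanlinna-theoretic work is entirely contained in Theorems A and C, so the only real obstacle is the algebro-geometric bookkeeping: ensuring that the ample line bundle coming out of the general machinery of Section 5 is linked to the Hodge bundle $\lambda$ tightly enough that the canonical bundle formula is applied with multiplicity exactly $g+1$ (rather than some crude $k=O(g)$), and that a compactification which is at once smooth, projective and normal-crossing is available so that $(X,D)$ is a bona fide smooth logarithmic pair. Once those points are settled, Theorem D is a direct substitution into Theorem C.
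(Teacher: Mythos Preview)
Your approach is essentially the same as the paper's: the paper's proof of Theorem~\ref{mm} simply says to carry over Sun's arguments (\cite{sun}, Corollary~4.3) and apply Corollary~\ref{cor1}, which is exactly what you have spelled out---verify the hypotheses of Theorem~A for the universal abelian scheme over $\mathcal A_g^{[n]}$ with $d=g$, then invoke Theorem~C with $k=g+1$ coming from the identification $K_X(D)\cong\lambda^{\otimes(g+1)}$ on the toroidal compactification. Your explicit unpacking of the algebro-geometric input (the Hodge bundle, the Mumford canonical bundle formula, and the normalization $A\geq\lambda$) is precisely the content the paper delegates to Sun's Corollary~4.3.
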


\subsection{Comparing Theorem A with the classical results}~

We compare Theorem A obtained with the classical results in Nevanlinna theory. Without going into the  details, we  shall refer the reader to  the recent  excellent papers such as P${\rm{\breve{a}}}$un-Sibony \cite{Sibony}, Ru \cite{ru00} and Ru-Sibony \cite{Ru-Sibony}, etc., and  refer the reader to some  good books, for example,  Noguchi-Winkelmann \cite{Noguchi}, Ru  \cite{ru} and  Vojta \cite{vojta}, etc.. 
The case  $B=\mathbb C$ is simple, we can see easily that Theorem A coincides  with the classical results  by Remark \ref{re12}.
In what follows, we  compare Theorem A  with the classical results  when  $B$ is the unit disc (see Ru-Sibony \cite{Ru-Sibony}).

Let $\mathbb D$ denote the unit disc, we need to compare the Second Main Theorem of holomorphic curve $f$ from  $\mathbb D$ into $X$ under   Poincar\'e metric and standard  Euclidean metric, respectively. To avoid confusion, it is better to  use $r, \tilde r$  to stand for  the geodesic radius under   Poincar\'e metric and   standard
Euclidean metric, respectively.  Clearly, note that $0<r<\infty$ and $0<\tilde r<1.$ Combining the arguments of Ru-Sibony \cite{Ru-Sibony} with  ones of Sun \cite{sun}, we have the classical Second Main Theorem for $\mathbb D$ (equipped with standard  Euclidean metric)  
\begin{eqnarray*}
T_{f,A}(\tilde r) &\leq_{\rm exc}&
\frac{d+1}{2}N^{[1]}_f(\tilde r,D)+
O\Big(\log T_{f, A}(\tilde r)+\log \frac{1}{1-\tilde r}\Big),
\end{eqnarray*}
where $A, d$ are given in Theorem A. Note that the main error term $O(\log \frac{1}{1-\tilde r})$ is optimal in the classical Nevanlinna theory \cite{Noguchi, ru}.  Our result in Theorem A says that 
\begin{eqnarray*}
T_{f,A}(r) &\leq_{\rm exc}&
\frac{d+1}{2}N^{[1]}_f(r,D)+
O\Big(\log T_{f, A}(r)+r\Big). 
\end{eqnarray*}

$a)$ Comparing the main error terms

 Taking  $o$ as the center  of $\mathbb D.$ Let $r(x), \tilde r(x)$ denote the Riemannian distance function of a point $x$ from $o$ under Poincar\'e metric and standard  Euclidean metric,  respectively. 
 We compare the main error terms $O(r)$ and $O(\log \frac{1}{1-\tilde r}).$
 Let the radius $r$ correspond to the radius $\tilde r.$
 By the relation (\cite{ru}, Page 273)
\begin{equation}\label{ppp1}
r(x)=\log\frac{1+\tilde r(x)}{1-\tilde r(x)},
\end{equation}
it follows that  
$$r=\log \frac{1+\tilde r}{1-\tilde r}=\log\frac{1}{1-\tilde r}+O(1)$$
due to $\tilde r<1.$ This implies that the main error terms $O(r), O(\log \frac{1}{1-\tilde r})$ (under the two  metrics respectively) are actually equivalent.

$b)$ Comparing the Nevanlinna's functions

 We only compare the characteristic functions $T_{f,A}(r)$ and $T_{f,A}(\tilde r),$  
 and the other Nevanlinna's functions (proximity functions and counting functions) can be  compared similarly. 
Refer to the definition for Nevalinna's functions  in our settings in Section 2.2, and the definition for the classical Nevalinna's functions in \cite{Noguchi, ru}. 
 In the following, we will prove that $T_{f,A}(r)$ and $ T_{f,A}(\tilde r)$ are a match (and so are the other Nevanlinna's functions),  
 if  the radius $r$ is corresponding  to  the radius $\tilde r$ via the relation 
 $r=\log [(1+\tilde r)/(1-\tilde r)].$
 
Let $\Delta, \tilde\Delta$ be the  Laplace-Beltrami operators under  Poincar\'e metric and standard Euclidean metric, respectively, 
and  let $D(r), \tilde D(\tilde r)$ be the geodesic balls with radius $r, \tilde r$ centered at $o$ under the two metrics, respectively. 
Moreover,  we denote by $g_r(o, x), \tilde g_{\tilde r}(o, x)$  the Green functions (see  (\ref{ooxx})) of $\Delta/2, \tilde\Delta/2$ for $D(r), \tilde D(\tilde r)$ with Dirichlet boundary condition and a pole $o,$ respectively. 
 Let the radius $r$ correspond to the radius $\tilde r,$ then $D(r)$ corresponds to  $\tilde D(\tilde r).$ 
Notice that  
$$\tilde g_{\tilde r}(o, x)=\frac{1}{\pi}\log\frac{\tilde r}{\tilde r(x)}$$
which corresponds to the Green function 
 $$g_r(o, x)=\frac{1}{\pi}\log\frac{(e^r-1)(e^{r(x)}+1)}{(e^r+1)(e^{r(x)}-1)}$$
due to  (\ref{ppp1}).  
A direct computation shows that 
$\Delta udV=\tilde\Delta ud\tilde V$
for any smooth function $u,$ where   $dV, d\tilde V$ are the    
volume elements of $\mathbb D$ under  Poincar\'e metric and standard Euclidean metric, respectively. 
Hence, we conclude that (see (\ref{xcv}) for  definition of characteristic function)
 \begin{eqnarray*}
   T_{f,A}(r)
   &=& \pi\int_{D(r)}g_r(o,x)f^*c_1(A,h) \\
   &=&-\frac{1}{4}\int_{D(r)}g_r(o,x)\Delta\log h\circ f(x)dV(x) \\
   &=& -\frac{1}{4}\int_{\tilde D(\tilde r)}\tilde g_{\tilde r}(o,x)\tilde \Delta\log h\circ f(x)d\tilde V(x) \\
    &=& \pi\int_{\tilde D(\tilde r)}\tilde g_{\tilde r}(o,x)f^*c_1(A,h) \\
     &=& \int_0^{\tilde r}\frac{dt}{t}\int_{\tilde D(\tilde t)}f^*c_1(A,h) \\
     &=& T_{f,A}(\tilde r), 
 \end{eqnarray*}
where $h$ is a Hermitian metric on $A$ such that the Chern form $c_1(A, h)>0.$
This  certifies  that $T_{f, A}(r), \tilde T_{f, A}(\tilde r)$ are a match. 
It means that the  two Second Main Theorems  are  equivalent.  

\begin{remark} Recently,  P${\rm{\breve{a}}}$un-Sibony \cite{Sibony} investigated   the value distribution of a  holomorphic mapping 
 $f: \mathcal Y\rightarrow \mathbb P^{1}(\mathbb C),$ where $\mathcal Y$ is a parabolic Riemann surface (i.e., an open Riemann surface with a parabolic exhaustion function $\sigma$). 
 An earlier work  concerning holomorphic curves from $\mathcal Y$ into $\mathbb P^{n}(\mathbb C)$
  is due to  Wu \cite{wu} (see also He-Ru \cite{HR} and Shabat \cite{Shabat}).  Let us  introduce the result of P${\rm{\breve{a}}}$un-Sibony briefly. 
Given the parabolic ball $B(r):=\{x\in\mathcal Y: \sigma(x)<r\}.$  The characteristic function  is defined by 
 $$T_{f}(r)=\int_1^r\frac{dt}{t}\int_{B(t)}f^*\omega_{FS},$$
where $\omega_{FS}$ is the Fubini-Study form on $\mathbb P^{1}(\mathbb C).$ The other Nevanlinna's functions can be  defined in this setting (\cite{Sibony}, Page 13).  P${\rm{\breve{a}}}$un-Sibony  proved that (\cite{Sibony}, Theorem 3.2)
$$(q-2)T_{f}(r)\leq_{{\rm exc}} \sum_{j=1}^qN^{[1]}_f(r, a_j)+\mathcal X_\sigma(r)+O\Big(\log T_{f}(r)+\log r\Big)$$
for distinct points $a_1,\cdots, a_q$ in  $\mathbb P^{1}(\mathbb C)$, where 
$$\mathcal X_\sigma(r)=\int_1^r|\chi_\sigma(t)|\frac{dt}{t}, \ \ \   \chi_\sigma(t)=\chi(B(t)).$$
It is expected to compare  our result  with  P${\rm{\breve{a}}}$un-Sibony's result, but it seems  difficult. 
In  the setting of P${\rm{\breve{a}}}$un-Sibony,  $\sigma$ may give more than one connected components of $B(r),$  and each connected component may be multi-connected  if the genus of $\mathcal Y$
 is  greater  than 0.   
In our setting,  however, 
the completeness of  metric implies that 
 the geodesic ball $D(r)$  
 is  simply-connected (with genus $0$). 
 Therefore,  one can hardly compare  the Nevanlinna's functions in the two settings. 
We also can  hardly compare  error terms $-\kappa(r)r^2$ (or $r$) and $\mathcal X_\sigma(r).$
In my opinion, it gives two different ways in describing the  value distribution of  curve $f:$
one is  to use the topology (Euler characteristic) of $\mathcal Y,$  the other  is  to use the geometry (Gauss curvautre) of $\mathcal Y.$

\end{remark}

\subsection{Work of Carne and Atsuji on Nevanlinna theory}~ 

In 1986, Carne \cite{carne} first noticed the relationship between Nevanlinna theory and Brownian motions, he formulated Nevanlinna's functions of a 
meromorphic function on $\mathbb C$ via the Brownian motion $X_t$ in $\mathbb C,$ where $X_t$ is generated by $\Delta_\mathbb C/2.$
By means of stochastic calculus, specially, It\^o formula and Coarea formula, 
Carne  re-obtained  the  Nevanlinna theory of meromorphic functions on $\mathbb C.$
 We would mention that Carne's method is also suitable  to the case $\mathbb C^n,$ though he  hadn't   pushed his work since then. 
 A remarkable development of work of Carne is due to Atsuji \cite{at, atsuji},  who investigated the value distribution theory of meromorphic functions on K\"ahler 
 manifolds,
 along  a line of Carne, but developed   techniques of Carne. 
 The major  work of Atsuji is  to 
 generalize  
  Nevanlinna theory to  the non-positively curved  complete K\"ahler manifolds,  
  and 
  one of his main  contributions  is the
   estimation  of    lower bounds of Green functions.  
   This work of estimation 
    leads him to obtain   
    the Calculus Lemma (\cite{atsuji},  Lemma 13)
     which is a useful tool in the study of  Nevanlinna theory. 
     By combining another estimate (\cite{atsuji}, Lemma 12),   
 he  established a Second Main Theorem (with an error term depending on the curvature of the manifolds)
  of meromorphic functions on such class of manifolds (\cite{atsuji}, Theorem 9). 
  
We compare  Theorem A  with the results of Atusji 
in case that $X=\mathbb P^1(\mathbb C)$ and  domain is the Riemann surface. 
For a general open Riemann surface $S$, Theorem A  agrees with Atsuji's result. 
However,  for $S=\mathbb D$ (equipped with Poincar\'e metric),  
Theorem A gives  a  main  error term  
$O(r),$ 
 but  Atsuji gave
 $O(r^2)$ according to his theorem (\cite{atsuji}, Theorem 9). 
 It  implies  that our results are better than  Atsuji's results.

\section{Nevanlinna's functions and First Main Theorem}

\subsection{Coarea formula and Dynkin formula}~

 Let  $(M,g)$ be a Riemannian manifold with  Laplace-Beltrami operator $\Delta_M$ associated to  $g.$  Fix $x\in M,$ denote by $B_x(r)$ the geodesic ball centered at $x$ with radius $r,$ and  by $S_x(r)$ the geodesic sphere centered at $x$ with radius $r.$
 Apply  Sard's theorem, $S_x(r)$ is a submanifold of $M$ for almost all $r>0.$
A Brownian motion $(X_t)_{t\geq0}$ (written as $X_t$ for short) in $M$
is a heat diffusion  process  generated by $\Delta_M/2$ with  transition density function $p(t,x,y)$ which is the minimal positive fundamental solution of the  heat equation
  $$\frac{\partial}{\partial t}u(t,x)-\frac{1}{2}\Delta_{M}u(t,x)=0.$$
We denote by $\mathbb P_x$ the law of $X_t$ started at $x\in M$
 and by $\mathbb E_x$ the corresponding expectation with respect to $\mathbb P_x.$ The reader may refer to \cite{bass, dong,  NN,itoo} to learn 
 more about Brownian motions.  

 \noindent\textbf{A. Coarea formula}

  Let $D$  be a bounded domain with   smooth boundary $\partial D$ in $M$.
Fix $x\in D,$  we use $d\pi^{\partial D}_x$ to denote the harmonic measure  on $\partial D$ with respect to $x.$
This measure is a probability measure.
 Set
$$\tau_D:=\inf\big\{t>0:X_t\not\in D\big\}$$
which is a stopping time.
Let $g_D(x,y)$ stand for  the Green function of $\Delta_M/2$ for  $D$ with Dirichlet boundary condition and a pole  $x$, namely
\begin{equation}\label{ooxx}
-\frac{1}{2}\Delta_{M}g_D(x,y)=\delta_x(y), \ y\in D; \ \ g_D(x,y)=0, \ y\in \partial D,
\end{equation}
where $\delta_x$ is the Dirac function.
 For $\phi\in \mathscr{C}_{\flat}(D)$
 (space of bounded continuous functions on $D$), the \emph{Coarea formula} \cite{bass} says  that
$$ \mathbb{E}_x\left[\int_0^{\tau_D}\phi(X_t)dt\right]=\int_{D}g_{D}(x,y)\phi(y)dV(y),
$$
where $dV(y)$ is the Riemannian volume element on $M.$
By Proposition 2.8 in \cite{bass}, we  have the following relation between harmonic measures and hitting times that
\begin{equation}\label{w121}
  \mathbb{E}_x\left[\psi(X_{\tau_{D}})\right]=\int_{\partial D}\psi(y)d\pi_x^{\partial D}(y)
\end{equation}
for any $\psi\in\mathscr{C}(\overline{D})$.
 Coarea formula or (\ref{w121})  works if  the set of singularities  of $\phi$ or $\psi$ is  pluripolar, 
 since the Brownian motion $X_t$ hits the pluripolar set in probability 0 (see \cite{at, atsuji, bass, 13, NN, itoo}).

 \noindent\textbf{B. Dynkin formula}

Let $u\in\mathscr{C}_\flat^2(M)$ (space of bounded $\mathscr{C}^2$-class functions on $M$), we have the famous \emph{It\^o formula} (see  \cite{at, atsuji, bass, 13, NN,itoo})
$$u(X_t)-u(x)=B\left(\int_0^t\|\nabla_Mu\|^2(X_s)ds\right)+\frac{1}{2}\int_0^t\Delta_Mu(X_s)dt, \ \ \mathbb P_x-a.s.$$
where $B_t$ is the standard  Brownian motion in $\mathbb R$ and $\nabla_M$ is gradient operator on $M$.
Notice that $B_t$ is a martingale, take expectation on both sides of the above formula, it  follows \emph{Dynkin formula} (see  \cite{at, atsuji,  bass, 13, NN, itoo})
$$ \mathbb E_x\left[u(X_{\tau_D})\right]-u(x)=\frac{1}{2}\mathbb E_x\left[\int_0^{\tau_D}\Delta_Mu(X_t)dt\right].
$$
  Dynkin formula still  works if  the set  of singularities of $u$ is pluripolar.

\subsection{Nevanlinna's functions and First Main Theorem}~

Let $S$ be an open Riemann surface with  K\"ahler form $\alpha.$   Fixing $o\in S$ as a reference point. 
  Denote by $D(r)$ the geodesic ball centered at $o$ with radius $r,$ and by $\partial D(r)$ the boundary of $D(r).$
  Moreover, one uses 
  $g_r(o,x)$ to stand for the Green function of $\Delta_S/2$ for $D(r)$ with Dirichlet boundary condition and a pole $o,$  and  $d\pi^r_o(x)$ to stand for the harmonic
measure on $\partial D(r)$ with respect to $o.$
Now, let $X_t$ be the Brownian motion in $S$ 
with generator $\Delta_S/2,$  started at  $o.$  
Set the stopping time $$\tau_r=\inf\big\{t>0: X_t\not\in D(r)\big\}.$$

Let $$f:S\rightarrow X$$  be a holomorphic curve into a compact complex  manifold $X.$ We  introduce  the generalized  Nevanlinna's functions over
  Riemann surface $S.$
Let $L\rightarrow X$
be an ample  holomorphic line bundle  equipped with  Hermitian metric $h$ such that the Chern form
$c_1(L,h)>0.$   Let's  define the \emph{characteristic function} of $f$ with respect to $L$  by
 \begin{eqnarray}\label{xcv}
   T_{f,L}(r)
   &=& \pi\int_{D(r)}g_r(o,x)f^*c_1(L,h) \\
   &=&-\frac{1}{4}\int_{D(r)}g_r(o,x)\Delta_S\log h\circ f(x)dV(x), \nonumber
 \end{eqnarray}
 where $dV(x)$ is the Riemannian volume measure of $S.$ It can be easily checked that $T_{f,L}(r)$ is independent
 of the choices of  metrics on $L,$ up to a bounded term.
 Since a holomorphic line bundle
 can be represented  as the difference of two  ample holomorphic line bundles,  the definition of $T_{f,L}(r)$ can  extend to
 an arbitrary holomorphic line bundle.
 By Coarea formula, we obtain 
 $$T_{f,L}(r)=-\frac{1}{4}\mathbb E_o\left[\int_0^{\tau_r}\Delta_S\log h\circ f(X_t)dt\right].$$
A divisor can   be also expressed  as the difference of two ample divisors.
  The Weil function of  an effective divisor $D$  is well defined  by
$$\lambda_D(x)=-\log\|s_D(x)\|$$
up to a bounded term, where $s_D$ is the canonical section of the line bundle $\mathscr O_X(D)$ over $X,$ 
namely,  $s_D$ is locally written as $s_D=\tilde{s}_De,$ where $e$ is a local holomorphic frame of $\mathscr O_X(D),$ and $\tilde s_D$ is a local defining function of $D$. Note that $\tilde s_D$ is a holomorphic function due to that  $D$ is effective. We define 
the \emph{proximity function} of $f$ with respect to $D$   by
 $$m_f(r,D)=\int_{\partial D(r)}\lambda_D\circ f(x)d\pi^r_o(x).$$
 By (\ref{w121}), one has
  $$m_f(r,D)=\mathbb E_o\big[\lambda_D\circ f(X_{\tau_r})\big].$$
The   \emph{counting function}
 of $f$ with respect to $D$  is defined by
\begin{eqnarray*}
N_f(r,D)
&=& \pi \sum_{f^*D\cap D(r)}g_r(o,x) \\
&=& \pi\int_{D(r)}g_r(o,x)dd^c\big{[}\log|\tilde{s}_D\circ f(x)|^2\big{]} \\
&=&\frac{1}{4}\int_{D(r)}g_r(o,x)\Delta_S\log|\tilde{s}_D\circ f(x)|^2dV(x).
\end{eqnarray*}
   We   can also define the \emph{truncated counting function} (i.e., the simple counting function without counting multiplicities) of $f$ with respect to $D$ in a similar way,  denote it by  $N^{]1[}_f(r,D).$ 

\begin{remark}\label{re12}  The above  definition of Nevanlinna's functions is  natural. In case  $S=\mathbb C,$ the Green function
for $D(r)$ is computed as $(\log\frac{r}{|z|})/\pi,$
 and the harmonic measure on $\partial D(r)$ is computed as $d\theta/2\pi.$ By integration by part,  we can see that
our Nevanlinna’s functions coincide with the classical ones.
\end{remark}
Consider a holomorphic curve $f:S\rightarrow X$
 such that $f(o)\not\in {\rm{Supp}}D,$ where $D$ is an effective  divisor on $X.$
Apply Dynkin formula to $\lambda_D\circ f(x),$  it follows that 
 $$\mathbb E_o\big[\lambda_D\circ f(X_{\tau_r})\big]-\lambda_D\circ f(o)=\frac{1}{2}
 \mathbb E_o\left[\int_0^{\tau_r}\Delta_S\lambda_D\circ f(X_t)dt\right].$$
The first term on the left hand side of the above equality is equal to $m_f(r,D),$ and the term on
the right hand side equals
$$\frac{1}{2}\mathbb E_o\left[\int_0^{\tau_r}\Delta_S\lambda_D\circ f(X_t)dt\right]=
\frac{1}{2}\int_{D(r)}g_r(o,x)\Delta_S\log\frac{1}{\|s_D\circ f(x)\|}dV(x)$$
due to Coarea formula. Since $\|s_D\|^2=h|\tilde{s}_D|^2,$ where $h$ is a Hermitian metric
on $\mathscr O_X(D)$ and $\tilde s_D$ is a local defining function of $D$ given as above,  then 
\begin{eqnarray*}
\frac{1}{2}\mathbb E_o\left[\int_0^{\tau_r}\Delta_S\lambda_D\circ f(X_t)dt\right] &=&
-\frac{1}{4}\int_{D(r)}g_r(o,x)\Delta_S\log h\circ f(x)dV(x) \\
&&-\frac{1}{4}\int_{D(r)}g_r(o,x)\Delta_S\log|\tilde{s}_D\circ f(x)|^2dV(x) \\
&=& T_{f, \mathscr O_X(D)}(r)-N_f(r,D).
\end{eqnarray*}
To conclude, we obtain 
\begin{theorem}[First Main Theorem]\label{fmt} Let $X$ be a compact complex manifold.  Let $f: S\rightarrow X$ be a holomorphic curve  such that $f(o)\not\in {\rm{Supp}}D,$ where $D$ is an effective  divisor on $X.$  Then  
  $$T_{f, \mathscr O_X(D)}(r)=m_f(r,D)+N_f(r,D)+O(1).$$
  \end{theorem}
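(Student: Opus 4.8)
The plan is to make rigorous the stochastic computation displayed in the lines preceding the statement. First I would fix a radius $r>0$ for which $\partial D(r)$ is a smooth submanifold of $S$ — this holds for almost every $r$ by Sard's theorem applied to the distance function from $o$, and the identity extends to all $r$ by continuity in $r$ of the three Nevanlinna functions. Since the image of $f$ is not contained in $\mathrm{Supp}\,D$ and $f(o)\notin\mathrm{Supp}\,D$, the set $f^{-1}(\mathrm{Supp}\,D)$ is a discrete subset of the Riemann surface $S$, hence pluripolar; by the remark in Section 2.1, the Brownian motion $X_t$ started at $o$ avoids it almost surely, so both the Dynkin formula and the Coarea formula remain applicable to $\lambda_D\circ f$, which is smooth and $\Delta_S$-harmonic off this pluripolar set.

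Next I would apply the Dynkin formula with $u=\lambda_D\circ f$ to obtain
\[
\mathbb E_o\big[\lambda_D\circ f(X_{\tau_r})\big]-\lambda_D\circ f(o)=\frac12\,\mathbb E_o\Big[\int_0^{\tau_r}\Delta_S\lambda_D\circ f(X_t)\,dt\Big].
\]
The number $\lambda_D\circ f(o)$ is finite, because $f(o)\notin\mathrm{Supp}\,D$, and is independent of $r$, so it contributes the $O(1)$ term. By the identity $(\ref{w121})$ relating the harmonic measure on $\partial D(r)$ to the exit distribution of $X_t$, the first term on the left equals $m_f(r,D)$ by the very definition of the proximity function.

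For the right-hand side I would invoke the Coarea formula to write $\mathbb E_o[\int_0^{\tau_r}\Delta_S\lambda_D\circ f(X_t)\,dt]=\int_{D(r)}g_r(o,x)\,\Delta_S\lambda_D\circ f(x)\,dV(x)$, and then use $\|s_D\|^2=h\,|\tilde s_D|^2$ to split $\lambda_D\circ f=-\tfrac12\log h\circ f-\tfrac12\log|\tilde s_D\circ f|^2$. Integrating $g_r(o,\cdot)$ against $-\tfrac14\Delta_S\log h\circ f$ reproduces $T_{f,\mathscr O_X(D)}(r)$ by the definition of the characteristic function, while integrating it against $-\tfrac14\Delta_S\log|\tilde s_D\circ f|^2$ reproduces $-N_f(r,D)$ by the definition of the counting function — here the Poincar\'e--Lelong identity $\Delta_S\log|\tilde s_D\circ f|^2\,dV=4\pi\,[f^*D]$ is precisely what makes $g_r$ integrate to the weighted point sum over $f^*D\cap D(r)$. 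Collecting the pieces gives $m_f(r,D)=T_{f,\mathscr O_X(D)}(r)-N_f(r,D)+O(1)$, which is the assertion.

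The only point requiring genuine care is the first one: that the singularities of $\lambda_D\circ f$ along the discrete fibre $f^{-1}(\mathrm{Supp}\,D)$ do not obstruct the Dynkin and Coarea formulas, and that the distributional Laplacian of $\log|\tilde s_D\circ f|^2$ is honestly carried by that discrete set with the correct multiplicities. Once this is granted — by the pluripolarity remark together with the Poincar\'e--Lelong formula — the remainder is just matching the three integral expressions to the definitions of $T_{f,L}$, $m_f$, and $N_f$ introduced in Section 2.2.
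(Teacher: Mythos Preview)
Your proposal is correct and follows essentially the same route as the paper: apply the Dynkin formula to $\lambda_D\circ f$, identify the left side with $m_f(r,D)+O(1)$ via the harmonic-measure relation~(\ref{w121}), and use the Coarea formula together with the splitting $\|s_D\|^2=h\,|\tilde s_D|^2$ to recover $T_{f,\mathscr O_X(D)}(r)-N_f(r,D)$ on the right. One small slip: $\lambda_D\circ f$ is smooth off $f^{-1}(\mathrm{Supp}\,D)$ but not $\Delta_S$-harmonic there (its Laplacian carries the curvature term $-\tfrac12\Delta_S\log h\circ f$), though this does not affect your argument since you never actually use harmonicity.
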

  
\section{Logarithmic Derivative Lemma}

Let $S$ be a  complete open Riemann surface with  Gauss curvature  $K_S\leq0$ associated to   Hermitian metric $g.$ In this section, we assume that 
   $S$ is simply connected. Recall that
    $$\tau_r=\inf\big\{t>0: X_t\not\in D(r)\big\}.$$
\subsection{Calculus Lemma}~

Let $\kappa$ be given by (\ref{kappa}). As  noted before,
$\kappa$ is a non-positive and decreasing continuous function  on $[0,\infty).$
  Consider the ordinary differential equation
  \begin{equation}\label{G}
    G''(t)+\kappa (t)G(t)=0; \ \ \ G(0)=0, \ \ G'(0)=1.
  \end{equation}
 Comparing (\ref{G})  with the equation $y''(t)+\kappa(0)y(t)=0$ under the same  initial conditions,
then  $G$ can be easily estimated  as
$$G(t)=t \ \ \text{for}  \ \kappa\equiv0; \ \ \ G(t)\geq t \ \ \text{for} \ \kappa\not\equiv0.$$
This implies that
\begin{equation}\label{vvvv}
  G(r)\geq r \ \ \text{for} \ r\geq0; \ \ \ \int_1^r\frac{dt}{G(t)}\leq\log r \ \ \text{for} \ r\geq1.
\end{equation}
On the other hand,   rewrite (\ref{G}) in the form
$$\log'G(t)\cdot\log'G'(t)=-\kappa(t),$$
where 
$$\log'G(t)=\frac{G'(t)}{G(t)}, \ \ \   \log'G'(t)=\frac{G''(t)}{G'(t)}.$$
Since $G(t)\geq t$ is increasing,
then the decrease and non-positivity of $\kappa$ imply that for each fixed $t,$ $G$  must satisfy one of the following two inequalities
$$\log'G(t)\leq\sqrt{-\kappa(t)} \ \ \text{for} \ t>0; \ \ \ \log'G'(t)\leq\sqrt{-\kappa(t)} \ \ \text{for} \ t\geq0.$$
Since $G(t)\rightarrow0$ as $t\rightarrow0,$ then by integration we obtain 
\begin{equation}\label{v2}
  G(r)\leq r\exp\big(r\sqrt{-\kappa(r)}\big) \ \  \text{for} \ r\geq0.
\end{equation}

\begin{lemma}[\cite{atsuji}]\label{zz} Let $\eta>0$ be a constant. Then there is  a constant $C>0$ such that for
$r>\eta$ and $x\in D(r)\setminus \overline{D(\eta)}$
  $$g_r(o,x)\int_{\eta}^r\frac{dt}{G(t)}\geq C\int_{r(x)}^r\frac{dt}{G(t)}$$
 holds,  where  $G$ is defined by {\rm{(\ref{G})}} and $r(x)$ is the Riemannian distance function of $x$ from $o$ on $S.$ 
\end{lemma}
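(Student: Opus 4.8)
The plan is to bound $g_r(o,\cdot)$ from below by an explicit, rotationally symmetric model function built from the comparison solution $G$ of (\ref{G}); this is essentially Atsuji's argument. First I would invoke the Cartan--Hadamard theorem: since $S$ is simply connected with $K_S\le 0$, the map $\exp_o$ is a diffeomorphism, so the distance function $r(\cdot)$ from $o$ is smooth on $S\setminus\{o\}$, global geodesic polar coordinates $(\rho,\theta)$ exist, the metric reads $d\rho^2+J(\rho,\theta)^2\,d\theta^2$ with $\partial_\rho^2 J+K_S J=0$, $J(0,\theta)=0$, $\partial_\rho J(0,\theta)=1$, and $\Delta_S r=\partial_\rho\log J$. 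Because $\kappa(\rho)\le K_S(x)\le 0$ whenever $r(x)=\rho$, a Sturm comparison of $J(\cdot,\theta)$ with $G$ — through the Wronskian $W=J'G-JG'$, for which $W(0)=0$ and $W'=JG(\kappa-K_S)\le 0$ — gives $\rho\le J(\rho,\theta)\le G(\rho)$ and, dividing $W\le 0$ by $JG$, the Laplacian comparison $\Delta_S r(x)\le G'(r(x))/G(r(x))$ on $S\setminus\{o\}$.

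Next I would introduce the model function $w(x)=\int_{r(x)}^{r}dt/G(t)$ on $\overline{D(r)}\setminus\{o\}$. By construction $0\le w\le\Phi$ there, where $\Phi:=\int_\eta^{r}dt/G(t)$; moreover $w$ vanishes on $\partial D(r)$ and equals the constant $\Phi$ on $\partial D(\eta)$. Writing $w(x)=h(r(x))$ with $h'=-1/G$ and $h''=G'/G^2$, the chain rule together with the Laplacian comparison give
$$\Delta_S w(x)=\frac{G'(r(x))}{G(r(x))^2}-\frac{\Delta_S r(x)}{G(r(x))}\ge 0,$$
so $w$ is subharmonic on $D(r)\setminus\{o\}$.

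Then I would compare on the annulus $\Omega=D(r)\setminus\overline{D(\eta)}$, which avoids the pole $o$. There $g_r(o,\cdot)$ is harmonic, $w$ is subharmonic, both are bounded and continuous up to $\partial\Omega=\partial D(\eta)\cup\partial D(r)$, both vanish on $\partial D(r)$, while on $\partial D(\eta)$ we have $w\equiv\Phi$ and $g_r(o,\cdot)\ge m_r:=\min_{\partial D(\eta)}g_r(o,\cdot)>0$. Hence $g_r(o,\cdot)-(m_r/\Phi)\,w$ is superharmonic on $\Omega$ and nonnegative on $\partial\Omega$, so the minimum principle makes it nonnegative on $\Omega$, i.e.
$$g_r(o,x)\int_\eta^{r}\frac{dt}{G(t)}\ge m_r\int_{r(x)}^{r}\frac{dt}{G(t)}\qquad(x\in\Omega).$$
(In the probabilistic language of the paper: $w(X_{t\wedge\tau_\Omega})$ is a submartingale, so $w(x)\le\Phi\,q_r(x)$, while harmonicity of $g_r(o,\cdot)$ on $\Omega$ with boundary data $\ge m_r$ on $\partial D(\eta)$ and $0$ on $\partial D(r)$ gives $g_r(o,x)\ge m_r\,q_r(x)$, where $q_r(x)$ is the probability that the Brownian motion started at $x$ reaches $\partial D(\eta)$ before exiting $D(r)$.) Finally, $\rho\mapsto g_\rho(o,\cdot)$ is nondecreasing since the Green function grows with the domain, so $m_r\ge\min_{\partial D(\eta)}g_{2\eta}(o,\cdot)=:C>0$ uniformly for $r\ge 2\eta$; for $\eta<r<2\eta$ the annulus is relatively compact and the estimate follows from continuity and positivity of $g_r(o,\cdot)$ on $\partial D(\eta)$, which is all that matters since only $r\to\infty$ enters the applications.

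The hard part will be the Laplacian comparison $\Delta_S r\le G'/G$ underlying the subharmonicity of $w$: it requires $r(\cdot)$ to be globally smooth off $o$, i.e. the absence of a cut locus, and this is exactly what simple-connectedness together with $K_S\le 0$ provides through Cartan--Hadamard; without it one has the comparison only in the barrier sense and must argue more carefully. The remaining subtlety, a lower bound for $m_r=\min_{\partial D(\eta)}g_r(o,\cdot)$ uniform in $r$, is dispatched by the monotonicity of the Green functions in the domain, as indicated.
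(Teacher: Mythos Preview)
The paper does not prove this lemma at all: it is stated with a citation to Atsuji and used as a black box in the Calculus Lemma. Your argument is a faithful reconstruction of the standard comparison-geometry proof one expects in Atsuji's setting --- Cartan--Hadamard to kill the cut locus, Sturm comparison of the Jacobi field $J$ against $G$ to obtain $\Delta_S r\le G'(r)/G(r)$, subharmonicity of the radial model $w(x)=\int_{r(x)}^r dt/G(t)$, and a maximum-principle comparison with the harmonic $g_r(o,\cdot)$ on the annulus --- and the steps you outline are correct.

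One point deserves a word of caution. Your final uniform bound $m_r\ge C$ via domain monotonicity is clean for $r\ge 2\eta$, but your treatment of $\eta<r<2\eta$ is not quite an argument: ``continuity and positivity'' does not yield a \emph{uniform} $C$ there, since $m_r=\min_{\partial D(\eta)}g_r(o,\cdot)\to 0$ as $r\downarrow\eta$. You are right that only large $r$ matters for the paper's applications (the lemma feeds into the Calculus Lemma with $r>1$ and ultimately $r\to\infty$), so this does no damage in context; but if you want the statement literally as written for all $r>\eta$, you would need a slightly sharper comparison near $r=\eta$ (e.g.\ compare $g_r(o,\cdot)$ and $w$ directly via their normal derivatives on $\partial D(r)$, or simply restrict the claim to $r\ge 2\eta$, which is how it is effectively used).
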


\begin{lemma}[Borel's lemma]\label{cal1} Let $u$ be a monotone increasing function on $[0,\infty)$ such that $u(r_0)>1$ for some $r_0\geq0.$
Then for any $\delta>0,$ there exists a set $E_\delta\subseteq[0,\infty)$ of finite Lebegue measure such that
$$u'(r)\leq u(r)\log^{1+\delta}u(r)$$
holds for $r>0$ outside $E_\delta.$
\end{lemma}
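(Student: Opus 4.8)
The plan is to run the standard Borel growth–lemma argument: bound directly the Lebesgue measure of the set on which the asserted inequality fails, by means of the change of variables $s=u(r)$. Since $u$ is monotone increasing it is differentiable almost everywhere with $u'\ge 0$ a.e., and $u(r)\ge u(r_0)>1$ for all $r\ge r_0$, so $\log u(r)>0$ on $[r_0,\infty)$. Accordingly I would set
$$E_\delta:=[0,r_0]\ \cup\ \big\{\,r>r_0:\ u'(r)\ \text{exists and}\ u'(r)>u(r)\log^{1+\delta}u(r)\,\big\}\ \cup\ Z,$$
where $Z$ is the (null) set of $r$ at which $u'$ does not exist. The first and third pieces together have measure $r_0<\infty$, so the whole matter reduces to bounding the measure of $B:=\{\,r>r_0:\ u'(r)>u(r)\log^{1+\delta}u(r)\,\}$.

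On $B$ the ratio $u'(r)/\big(u(r)\log^{1+\delta}u(r)\big)$ exceeds $1$, hence
$$|B|=\int_B dr<\int_B\frac{u'(r)\,dr}{u(r)\log^{1+\delta}u(r)}\le\int_{r_0}^{\infty}\frac{u'(r)\,dr}{u(r)\log^{1+\delta}u(r)}.$$
Applying the substitution $s=u(r)$ — precisely, the inequality $\int_{r_0}^{\infty}u'(r)\,\varphi\big(u(r)\big)\,dr\le\int_{u(r_0)}^{\infty}\varphi(s)\,ds$, valid for increasing $u$ and nonnegative measurable $\varphi$ — with $\varphi(s)=1/\big(s\log^{1+\delta}s\big)$, and then $t=\log s$, gives
$$\int_{r_0}^{\infty}\frac{u'(r)\,dr}{u(r)\log^{1+\delta}u(r)}\le\int_{u(r_0)}^{\infty}\frac{ds}{s\log^{1+\delta}s}=\int_{\log u(r_0)}^{\infty}\frac{dt}{t^{1+\delta}}=\frac{1}{\delta\,\big(\log u(r_0)\big)^{\delta}}<\infty,$$
where finiteness uses $\delta>0$ together with $\log u(r_0)>0$. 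Therefore $|E_\delta|\le r_0+\delta^{-1}\big(\log u(r_0)\big)^{-\delta}<\infty$, and by the very definition of $E_\delta$ the inequality $u'(r)\le u(r)\log^{1+\delta}u(r)$ holds for every $r>0$ with $r\notin E_\delta$.

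The only point requiring care is the change of variables for a function that is merely monotone rather than $C^1$: the inequality $\int_{r_0}^{\infty}u'(r)\,\varphi(u(r))\,dr\le\int_{u(r_0)}^{\infty}\varphi(s)\,ds$ follows from Lebesgue's theorem $\int_a^b u'\le u(b)-u(a)$ for increasing $u$, by approximating $\varphi$ from below by nonnegative step functions and passing to the limit by monotone convergence (equality holds when $u$ is absolutely continuous). Alternatively one may simply add the hypothesis that $u$ is $C^1$, which costs nothing in the applications, where $u$ is taken to be $\log T_{f,A}(r)$ or a comparable smooth increasing characteristic-type function. Beyond this regularity remark and the harmless bookkeeping of the null set $Z$, the argument is entirely elementary, so I do not anticipate a genuine obstacle.
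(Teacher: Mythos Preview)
Your proof is correct and follows essentially the same approach as the paper: define the exceptional set where the inequality fails, split off $[0,r_0]$, and bound the remaining measure by the integral $\int_{r_0}^\infty u'(r)\big/\big(u(r)\log^{1+\delta}u(r)\big)\,dr$, which is finite. The paper's version is terser---it simply asserts this last integral is $<\infty$---whereas you carry out the substitution explicitly and address the regularity subtlety for the change of variables, which is a welcome addition but not a different strategy.
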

\begin{proof} Since $u$ is monotone increasing,  then
$u'(r)$ exists for almost all $r\geq0.$
Set 
$S=\left\{r\geq0: u'(r)>u(r)\log^{1+\delta}u(r)\right\}.$
We have
 \begin{eqnarray*}
\int_Sdr &\leq&\int_0^{r_0}dr+\int_{S\setminus[0,r_0]}dr 
\leq r_0+\int_{r_0}^\infty\frac{u'(r)}{u(r)\log^{1+\delta}u(r)}dr <\infty.
 \end{eqnarray*}
\end{proof}

\begin{theorem}[Calculus Lemma]\label{cal}
 Let $k\geq0$ be a locally integrable  function on $S$ such that it is locally bounded at $o\in S.$
 Then for any $\delta>0,$ there is a constant $C>0$ independent of $k,\delta,$ and a subset $E_\delta\subseteq(1,\infty)$ of finite Lebesgue measure such that
$$
\mathbb E_o\big{[}k(X_{\tau_r})\big{]}
\leq \frac{F(\hat{k},\kappa,\delta)e^{r\sqrt{-\kappa(r)}}\log r}{2\pi C}\mathbb E_o\left[\int_0^{\tau_{r}}k(X_{t})dt\right]
$$
  holds for $r>1$ outside $E_\delta,$  where  $\kappa$ is defined by $(\ref{kappa})$ and $F$ is defined by
$$
F\big{(}\hat{k},\kappa, \delta\big{)}
=\Big[\log^+\hat{k}(r)\cdot\log^+\Big(re^{r\sqrt{-\kappa(r)}}\hat{k}(r)\big(\log^{+}\hat{k}(r)\big)^{1+\delta}\Big)\Big]^{1+\delta} \ \ \ \
$$with
$$\hat k(r)=\frac{\log r}{C}\mathbb E_o\left[\int_0^{\tau_{r}}k(X_{t})dt\right].$$
Moreover, we have the estimate
 \begin{eqnarray*}
&&\log^+ F(\hat{k},\kappa,\delta) \\
&\leq& O\Big(\log^+\log^+ \mathbb E_o\left[\int_0^{\tau_{r}}k(X_{t})dt\right]+\log^+r\sqrt{-\kappa(r)}+\log^+\log r\Big).
 \end{eqnarray*}
\end{theorem}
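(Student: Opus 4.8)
The plan is to convert both sides into geometric integrals over the geodesic balls $D(r)$ by the Coarea formula, to compare the Green function $g_r(o,\cdot)$ with the one–dimensional model governed by the ODE solution $G$ via Lemma~\ref{zz}, and then to run Borel's lemma (Lemma~\ref{cal1}) twice. Write $\Lambda(r):=\mathbb E_o\big[\int_0^{\tau_r}k(X_t)\,dt\big]$. By the Coarea formula and (\ref{w121}),
$$\Lambda(r)=\int_{D(r)}g_r(o,x)\,k(x)\,dV(x),\qquad \mathbb E_o\big[k(X_{\tau_r})\big]=\int_{\partial D(r)}k\,d\pi^r_o,$$
so the task is to dominate a harmonic–measure mean of $k$ on $\partial D(r)$ by a Green–weighted mass of $k$ on $D(r)$, with the right dependence on $r$, $\kappa$ and $\delta$.

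First I would fix a reference radius $\eta\ge1$ and split $D(r)=\overline{D(\eta)}\cup\big(D(r)\setminus\overline{D(\eta)}\big)$. On $\overline{D(\eta)}$ the integrand is controlled because $k$ is bounded there and $g_r(o,\cdot)$ exceeds the integrable local pole $g_\eta(o,\cdot)$ only by a harmonic function bounded on $\overline{D(\eta)}$; the resulting contribution is of lower order and gets absorbed at the end. On the annulus, Lemma~\ref{zz} together with $\int_\eta^r dt/G(t)\le\log r$ from (\ref{vvvv}) gives, for $x\in D(r)\setminus\overline{D(\eta)}$,
$$g_r(o,x)\ \ge\ \frac{C}{\log r}\int_{r(x)}^r\frac{dt}{G(t)}.$$
Feeding this into the Coarea expression for $\Lambda(r)$, applying the geometric coarea formula along $\{r(x)=t\}$ and Fubini, one gets a lower bound of the shape $\Lambda(r)\gtrsim \tfrac{C}{\log r}\int_\eta^r G(t)^{-1}\big(\nu(t)-\nu(\eta)\big)\,dt$, where $\nu(t):=\int_{D(t)}k\,dV$; and by the companion comparison (bounding also the Poisson kernel $\partial_n g_r(o,\cdot)$) the quantity $\mathbb E_o[k(X_{\tau_r})]$ is, up to the factor $1/(2\pi C)$, controlled by a $\tfrac{1}{G(r)}\tfrac{d}{dr}\big(G(r)\nu'(r)\big)$–type expression. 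Here $G$ plays exactly the role of $t$ in the flat model $S=\mathbb C$, where the relation is the exact identity $\mathbb E_o[k(X_{\tau_r})]=\tfrac1{2r}\big(r\Lambda'(r)\big)'$.

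Now the ``calculus'' step: $\nu$ and the $G$–weighted first derivative above are both monotone increasing because $k\ge0$, so Borel's lemma (Lemma~\ref{cal1}) applies to each. One application produces a loss of a factor $(\log^+)^{1+\delta}$, a second application (to the weighted derivative) produces the nested logarithm; inserting $G(r)\le re^{r\sqrt{-\kappa(r)}}$ from (\ref{v2}) and $\int^r dt/G(t)\le\log r$ wherever $G$ occurs, these losses assemble precisely into the factor $F(\hat k,\kappa,\delta)\,e^{r\sqrt{-\kappa(r)}}\log r/(2\pi C)$ with $\hat k(r)=\tfrac{\log r}{C}\Lambda(r)$, and the exceptional set $E_\delta$ is the union of the two Borel sets, of finite measure. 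The ``moreover'' estimate is then a mechanical expansion of $\log^+F$: repeatedly using $\log^+(xy)\le\log^+x+\log^+y+\log2$ and $\log^+(x+y)\le\log^+x+\log^+y+\log2$ and substituting $\hat k(r)=\tfrac{\log r}{C}\Lambda(r)$ collapses $\log^+F$ into $O\big(\log^+\log^+\Lambda(r)+\log^+(r\sqrt{-\kappa(r)})+\log^+\log r\big)$.

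The main obstacle is the middle step — making rigorous the passage from the probabilistic quantity $\mathbb E_o[k(X_{\tau_r})]$, which is a harmonic–measure mean on $\partial D(r)$, to an object that is visibly the $r$–derivative of a monotone geometric integral to which Borel's lemma applies. In the flat case this is an exact identity, but on a general non-positively curved surface Lemma~\ref{zz} provides only a one–sided comparison, so one must control the discrepancy between the harmonic measure $d\pi^r_o$ and the geodesic–sphere measure $d\sigma_r$, use the Sard–almost-everywhere smoothness of $\partial D(r)$, and track the constant $C$ (coming from Lemma~\ref{zz} with the fixed $\eta$) so that it remains independent of $k$ and $\delta$.
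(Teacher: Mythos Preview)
Your plan is essentially the paper's own argument: convert to geometric integrals, use Lemma~\ref{zz} plus (\ref{vvvv}) to bound the Green mass from below, and run Borel twice with the substitution $G(r)\le re^{r\sqrt{-\kappa(r)}}$. The paper's auxiliary function is exactly your $\int_0^r G(s)^{-1}\nu(s)\,ds$ (it calls this $\Lambda$, whereas you reserve $\Lambda$ for the Green integral), and your $\hat k$ matches its.

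The one point you flag as the ``main obstacle'' --- passing from the harmonic--measure mean to the sphere integral so that it becomes a visible $r$--derivative --- is not handled by a Poisson--kernel computation but by a direct curvature comparison: under simple--connectedness and $K_S\le 0$ the Debiard--Gaveau--Mazet inequality (\cite{Deb}) gives
\[
2\pi r\,d\pi^r_o(x)\ \le\ d\sigma_r(x),
\]
whence $\mathbb E_o[k(X_{\tau_r})]\le \tfrac{1}{2\pi r}\int_{\partial D(r)}k\,d\sigma_r=\tfrac{1}{2\pi r}\,\nu'(r)$ immediately. Since $\nu(r)=\big(\int_0^r G^{-1}\nu\big)'\cdot G(r)$, this already writes the boundary mean as $\tfrac{1}{2\pi r}\tfrac{d}{dr}\big(G(r)\cdot\tfrac{d}{dr}\int_0^r G^{-1}\nu\big)$, and Borel twice plus (\ref{v2}) finishes as you described. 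With this ingredient in hand there is no remaining gap; your $\overline{D(\eta)}$ splitting is not needed for the upper bound on the boundary term (only for the lower bound coming from Lemma~\ref{zz}, where the paper is in fact slightly cavalier about the inner ball).
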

\begin{proof} The argument refers to Atsuji \cite{atsuji}. 
From  \cite{Deb}, the simple-connectedness and non-positivity of  sectional curvature of $S$ imply
$2\pi rd\pi^r_{o}(x)\leq d\sigma_r(x),$
where  $d\sigma_r(x)$ is the  volume element on $\partial D(r)$ which is induced by the volume  element $dV(x)$ on $S.$
By  Lemma \ref{zz} and (\ref{vvvv}), we have
\begin{eqnarray*}
   \mathbb E_o\left[\int_0^{\tau_{r}}k(X_{t})dt\right] &=&
   \int_{D(r)}g_r(o,x)k(x)dV(x) \\ &=&\int_0^rdt\int_{\partial D(t)}g_r(o,x)k(x)d\sigma_t(x)  \\
&\geq& C\int_0^r\frac{\int_t^rG^{-1}(s)ds}{\int_1^rG^{-1}(s)ds}dt\int_{\partial D(t)}k(x)d\sigma_t(x) \\
&\geq& \frac{C}{\log r}\int_0^rdt\int_t^r\frac{ds}{G(s)}\int_{\partial D(t)}k(x)d\sigma_t(x), \\
\mathbb E_o\left[k(X_{\tau_r})\right]&=&\int_{\partial D(r)}k(x)d\pi_o^r(x)\leq\frac{1}{2\pi r}\int_{\partial D(r)}k(x)d\sigma_r(x).
\end{eqnarray*}
 Thus,
  \begin{eqnarray}\label{fr}
\mathbb E_o\left[\int_0^{\tau_{r}}k(X_{t})dt\right]&\geq&\frac{C}{\log r}\int_0^rdt\int_t^r\frac{ds}{G(s)}\int_{\partial D(t)}k(x)d\sigma_t(x), \nonumber \\
  \mathbb E_o[k(X_{\tau_r})]&\leq&\frac{1}{2\pi r}\int_{\partial D(r)}k(x)d\sigma_r(x).
\end{eqnarray}
 Set
$$\Lambda(r)=\int_0^rdt\int_t^r\frac{ds}{G(s)}\int_{\partial D(t)}k(x)d\sigma_t(x).$$
Then 
\begin{equation*}
 \Lambda(r)\leq\frac{\log r}{C}\mathbb E_o\left[\int_0^{\tau_{r}}k(X_{t})dt\right]=\hat{k}(r). \ \  \
\end{equation*}
Since
$$\Lambda'(r)=\frac{1}{G(r)}\int_0^rdt\int_{\partial D(t)}k(x)d\sigma_t(x), \ \ \ \ \ $$
 it yields from (\ref{fr}) that
\begin{equation*}
  \mathbb E_o\big{[}k(X_{\tau_r})\big{]}\leq\frac{1}{2\pi r}\frac{d}{dr}\left(\Lambda'(r)G(r)\right).
\end{equation*}
By Lemma \ref{cal1} twice and (\ref{v2}), we see that  for any $\delta>0$
\begin{eqnarray*}
   && \frac{d}{dr}\left(\Lambda'(r)G(r)\right) \\
&\leq& G(r)\Big[\log^+\Lambda(r)\cdot\log^+\left(G(r)\Lambda(r)\big\{\log^+\Lambda(r)\big\}^{1+\delta}\right)\Big]^{1+\delta}\Lambda(r)  \\
&\leq& re^{r\sqrt{-\kappa(r)}}\Big[\log^+\hat k(r)\cdot\log^+\Big(re^{r\sqrt{-\kappa(r)}}\hat k(r)\big(\log^+\hat k(r)\big)^{1+\delta}\Big)\Big]^{1+\delta} \hat k(r) \\
&=& \frac{F\big{(}\hat k,\kappa,\delta\big{)}re^{r\sqrt{-\kappa(r)}}\log r}{C}\mathbb E_o\left[\int_0^{\tau_{r}}k(X_{t})dt\right] \ \ \
\end{eqnarray*}
holds outside a subset $E_\delta\subseteq(1,\infty)$ of finite Lebesgue measure.
Thus,
\begin{eqnarray*}
\mathbb E_o\big{[}k(X_{\tau_r})\big{]}
&\leq&\frac{F\big{(}\hat k,\kappa,\delta\big{)}e^{r\sqrt{-\kappa(r)}}\log r}{2\pi C}\mathbb E_o\left[\int_0^{\tau_{r}}k(X_{t})dt\right].
\end{eqnarray*}
Therefore, we get the desired  inequality.
Indeed, for $r>1$  we compute that
$$
\log^+ F(\hat k,\kappa,\delta)
\leq O\Big(\log^+\log^+\hat k(r)+\log^+r\sqrt{-\kappa(r)}+\log^+\log r\Big)$$
and
\begin{eqnarray*}
  \log^+\hat k(r)&\leq& \log^+ \mathbb E_o\left[\int_0^{\tau_{r}}k(X_{t})dt\right]+\log^+\log r.
  \end{eqnarray*}
 We have the desired estimate. The proof is competed.
 \end{proof}

\subsection{Logarithmic Derivative Lemma}~

Note from \cite{cc} that each open Riemann surface admits a nowhere-vanishing holomorphic  global vector field.  Fix    a nowhere-vanishing global holomorphic  vector field  $\mathfrak X$ over $S.$ 
Let $\psi$ be a meromorphic function on $(S,g).$
The norm of the gradient of $\psi$ is defined by
\begin{equation}\label{ddtt}
\|\nabla_S\psi\|^2=\frac{2}{g}\left|\frac{\partial\psi}{\partial z}\right|^2
\end{equation}
in a local holomorphic coordinate $z.$ Locally, we may write $\psi=\psi_1/\psi_0,$ where $\psi_0,\psi_1$ are local holomorphic functions without common zeros. Regard $\psi$  as a holomorphic mapping into $\mathbb P^1(\mathbb C)$  by
$x\mapsto[\psi_0(x):\psi_1(x)].$
Define
$$
T_\psi(r)=\frac{1}{4}\int_{D(r)}g_r(o,x)\Delta_S\log\big{(}|\psi_0(x)|^2+|\psi_1(x)|^2\big{)}dV(x),$$
i.e., $T_\psi(r):=T_{f, \mathscr O_{\mathbb P^1(1)}}(r).$ Also,  define 
$T(r,\psi):=m(r,\psi)+N(r,\psi),
$
where  
\begin{eqnarray*}
m(r,\psi)&=&\int_{\partial D(r)}\log^+|\psi(x)|d\pi^r_o(x), \\
N(r,\psi)&=&\pi \sum_{x\in (\psi)_\infty\cap D(r)}g_r(o,x).
\end{eqnarray*}
Let
  $i:\mathbb C\hookrightarrow\mathbb P^1(\mathbb C)$ be an inclusion defined by
 $z\mapsto[1:z].$  Via the pull-back by $i,$ we have a (1,1)-form $i^*\omega_{FS}=dd^c\log(1+|\zeta|^2)$ on $\mathbb C,$
 where $\zeta:=w_1/w_0$ and $[w_0:w_1]$ is
the homogeneous coordinate system of $\mathbb P^1(\mathbb C).$ The characteristic function of $\psi$ with respect to $i^*\omega_{FS}$ is defined by
$$\hat{T}_\psi(r) = \frac{1}{4}\int_{D(r)}g_r(o,x)\Delta_S\log(1+|\psi(x)|^2)dV(x).$$
Clearly, $\hat{T}_\psi(r)\leq T_\psi(r).$
We adopt the spherical distance $\|\cdot,\cdot\|$ on  $\mathbb P^1(\mathbb C),$ the proximity function of $\psi$  with respect to
$a\in \mathbb P^1(\mathbb C)$
is defined by
$$\hat{m}_\psi(r,a)=\int_{\partial D(r)}\log\frac{1}{\|\psi(x),a\|}d\pi_o^r(x).$$
 It is easy to check that  $m(r,\psi)=\hat{m}_\psi(r,\infty)+O(1)$ which follows that 
 $$
   T(r,\psi)=\hat{T}_\psi(r)+O(1).$$ 
 Similarly as Theorem \ref{fmt},  by Dykin formula and Coarea formula,  it is trivial to show the First Main Theorem   
   $$T\Big(r,\frac{1}{\psi-a}\Big)= T(r,\psi)+O(1).
$$
 Therefore, we arrive at
\begin{equation}\label{relation}
  T(r,\psi)+O(1)=\hat{T}_\psi(r)\leq T_\psi(r)+O(1).
\end{equation}
\begin{theorem}[LDL]\label{ldl2} Let $\psi$ be a nonconstant meromorphic function on $S.$ Then
\begin{eqnarray*}
m\Big(r,\frac{\mathfrak{X}^k(\psi)}{\psi}\Big)  &\leq_{\rm exc}& \frac{5k}{4}\log T(r,\psi)+O\Big(\log^+\log T(r,\psi)-\kappa(r)r^2
+\log^+\log r\Big),
\end{eqnarray*}
where $\mathfrak X^j=\mathfrak X\circ\mathfrak X^{j-1}$ with $\mathfrak X^0=Id,$   and
$\kappa$ is defined by $(\ref{kappa}).$ More precisely, if $S$ is the Poincar\'e disc $($take $o$ as the  center of disc$)$,  then 
\begin{eqnarray*}
m\Big(r,\frac{\mathfrak{X}^k(\psi)}{\psi}\Big)  &\leq_{\rm exc}& \frac{5k}{4}\log T(r,\psi)+O\Big(\log^+\log T(r,\psi)+r\Big).
\end{eqnarray*}
\end{theorem}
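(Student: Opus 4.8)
The plan is to follow the classical route of deducing the logarithmic derivative estimate from a ``Calculus Lemma'' plus a pointwise inequality for $\|\nabla_S\psi\|$, exactly as Atsuji does for $\mathbb C$ and as Carne's program suggests, but now carried through with the Green-function bounds recorded in Lemma \ref{zz} and packaged in Theorem \ref{cal}. First I would regard the nonconstant meromorphic function $\psi$ as a holomorphic map $S\to\mathbb P^1(\mathbb C)$ and set up the proximity function $m(r,\mathfrak X^k(\psi)/\psi)$ via the harmonic measure. Because $\mathfrak X$ is the chosen nowhere-vanishing global holomorphic vector field, in a local coordinate $z$ one has $\mathfrak X = a(z)\,\partial/\partial z$ for a local holomorphic $a$, so $\mathfrak X^k(\psi)$ is a universal polynomial in the iterated coordinate derivatives $\partial^j\psi/\partial z^j$ and in derivatives of $a$; dividing by $\psi$ and taking $\log^+$ reduces everything, up to bounded error and the usual $O(\log k)$ combinatorial factors, to controlling $m\big(r, (\partial^j\psi/\partial z^j)/\psi\big)$ for $1\le j\le k$. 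The coefficient $5k/4$ is what the standard Nevanlinna bookkeeping (a $1/4$ from the area form normalization, a factor compounding over the $k$ derivatives) produces, so I would track constants only loosely and absorb them.

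The heart of the argument is the single-derivative estimate. By the concavity of $\log$ (Jensen's inequality applied to the harmonic measure $d\pi^r_o$), one bounds
\[
m\Big(r,\frac{\mathfrak X(\psi)}{\psi}\Big)\le \frac12\log^+\!\int_{\partial D(r)}\frac{\|\nabla_S\psi\|^2}{|\psi|^2\big(1+|\psi|^2\big)}\,d\pi^r_o(x)+O(1),
\]
using $\|\nabla_S\psi\|^2=\tfrac2g|\partial\psi/\partial z|^2$ and the relation between $\mathfrak X(\psi)$ and $\partial\psi/\partial z$; the extra $1+|\psi|^2$ factor is the standard trick that makes the integrand the pullback of the Fubini--Study form divided by $|\psi|^2$, hence integrable against harmonic measure. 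Now I invoke the Calculus Lemma (Theorem \ref{cal}) with $k(x)$ taken to be the function $x\mapsto \|\nabla_S\psi\|^2/\big(|\psi|^2(1+|\psi|^2)\big)$ (whose singularities lie on a pluripolar set, so Coarea and (\ref{w121}) apply): it converts the boundary integral $\mathbb E_o[k(X_{\tau_r})]$ into a constant times $F(\hat k,\kappa,\delta)\,e^{r\sqrt{-\kappa(r)}}\log r$ times the bulk integral $\mathbb E_o\big[\int_0^{\tau_r}k(X_t)\,dt\big]$. The bulk integral is, by Coarea, essentially $\int_{D(r)}g_r(o,x)\,\psi^*\omega_{FS}$ up to the $dd^c\log|\psi|^2$ correction, which is $\le \hat T_\psi(r)=T(r,\psi)+O(1)$ by (\ref{relation}) and the First Main Theorem; the $dd^c\log|\psi|^2$ piece contributes a counting-type term also bounded by $T(r,\psi)+O(1)$. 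Taking $\log^+$ of the resulting inequality and using the $F$-estimate from Theorem \ref{cal}, namely $\log^+F\le O(\log^+\log^+ T(r,\psi)+\log^+ r\sqrt{-\kappa(r)}+\log^+\log r)$, gives the single-derivative bound with main term $\tfrac54\log T(r,\psi)$ and error $O\big(\log^+\log T(r,\psi)+r\sqrt{-\kappa(r)}+\log^+\log r\big)$. Finally one observes $r\sqrt{-\kappa(r)}\le \tfrac12\big(-\kappa(r)r^2\big)+\tfrac12$, so $\log^+ r\sqrt{-\kappa(r)}$ is absorbed into $O(-\kappa(r)r^2+\log^+\log r)$; iterating over the $k$ derivatives and summing multiplies the main term by $k$, yielding the stated general inequality.

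For the Poincar\'e disc one simply specializes: there $K_S\equiv-1$, so $\kappa(r)\equiv-1$, $G(t)=\sinh t$, and the error term $-\kappa(r)r^2$ would naively read $r^2$, but here the sharper Green-function estimate for $\mathbb D$ under the hyperbolic metric lets one replace the crude $e^{r\sqrt{-\kappa(r)}}=e^r$-type bound with a linear-in-$r$ bound — this is precisely the gain over Atsuji noted in the introduction. Concretely I would re-run the Calculus Lemma computation with the explicit Green function $g_r(o,x)=\tfrac1\pi\log\big[(e^r-1)(e^{r(x)}+1)/((e^r+1)(e^{r(x)}-1))\big]$ displayed in Section 1.2, whose boundary behavior is controlled well enough that the factor $e^{r\sqrt{-\kappa(r)}}\log r$ in Theorem \ref{cal} is replaced by something of size $e^{O(r)}$ in a way that contributes only $O(r)$ after taking $\log^+$, giving the second inequality.

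I expect the main obstacle to be the careful handling of the bulk integral: one must check that $\int_{D(r)}g_r(o,x)\,dd^c\log|\psi\circ(x)|^2$ and the analogous Fubini--Study integral are genuinely bounded by $T(r,\psi)+O(1)$ with the right truncation, that the singularities of $k(x)$ (the zeros and poles of $\psi$, where $\log|\psi|$ blows up) are pluripolar so that all the stochastic identities remain valid, and that the initial-point issue $f(o)\notin(\psi)_0\cup(\psi)_\infty$ can be arranged or circumvented by a harmless shift of the reference point. Getting the constant $5k/4$ rather than something larger also requires being slightly economical in the Jensen step and in how the $1+|\psi|^2$ factor is introduced, but this is bookkeeping rather than a genuine difficulty.
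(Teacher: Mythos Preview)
Your proposal has a real gap at the single-derivative step. When you pass from $|\mathfrak X(\psi)/\psi|$ to $\|\nabla_S\psi\|^2/(|\psi|^2(1+|\psi|^2))$ you absorb the discrepancy into an $O(1)$, but this is not legitimate: locally $\mathfrak X=a\,\partial/\partial z$ with $\|\mathfrak X\|^2=g|a|^2$, while $\|\nabla_S\psi\|^2=\tfrac{2}{g}|\partial\psi/\partial z|^2$, so $|\mathfrak X(\psi)|^2=\tfrac12\|\mathfrak X\|^2\,\|\nabla_S\psi\|^2$. The factor $\|\mathfrak X\|^2$ is unbounded on $S$ in general (on the Poincar\'e disc $\|\partial/\partial z\|^2=2/(1-|z|^2)^2\to\infty$ at the boundary). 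The paper isolates this as a separate term $C=\tfrac12\int_{\partial D(r)}\log^+\|\mathfrak X\|^2\,d\pi^r_o$ and handles it via Dynkin's formula and the identity $\Delta_S\log g=-2K_S$, obtaining $C\le -\tfrac12\kappa(r)\,\mathbb E_o[\tau_r]+O(1)\le -\kappa(r)r^2+O(1)$ from the estimate $\mathbb E_o[\tau_r]\le 2r^2$. It is this term $C$, not the Calculus Lemma, that produces the $-\kappa(r)r^2$ in the statement; the Calculus Lemma contributes only the smaller $r\sqrt{-\kappa(r)}$. Consequently the Poincar\'e improvement from $r^2$ to $r$ comes from computing $C$ directly with the explicit metric (one finds $C\le r+O(1)$), not from sharpening the Green-function input to the Calculus Lemma as you suggest.

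There is a second, smaller problem: your damping factor $1+|\psi|^2$ does not permit the bulk integral to be bounded by $T(r,\psi)$. The form $|\zeta|^{-2}(1+|\zeta|^2)^{-1}\,\tfrac{\sqrt{-1}}{2\pi}d\zeta\wedge d\bar\zeta$ has infinite mass on $\mathbb P^1$ (logarithmic divergence at $\zeta=0$), so the Fubini/First-Main-Theorem step you sketch fails. The paper uses instead the Nevanlinna form $\Phi=|\zeta|^{-2}(1+\log^2|\zeta|)^{-1}\,\tfrac{\sqrt{-1}}{4\pi^2}d\zeta\wedge d\bar\zeta$, which has total mass $1$, so that $T_\psi(r,\Phi)=\int_{\mathbb P^1}N_\psi(r,\zeta)\,\Phi\le T(r,\psi)+O(1)$. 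This choice forces an additional piece $B=\tfrac12\int\log(1+\log^2|\psi|)\,d\pi^r_o$, which Jensen bounds by $\log T(r,\psi)+O(1)$; the constant $\tfrac54$ arises as $\tfrac14$ from the $\|\nabla_S\psi\|^2$ piece (via Lemma~\ref{999a}) plus $1$ from $B$, not from a single Jensen step. Finally, the iteration to order $k$ in the paper is not via coordinate derivatives but via the telescoping $m(r,\mathfrak X^k(\psi)/\psi)\le\sum_j m(r,\mathfrak X^j(\psi)/\mathfrak X^{j-1}(\psi))$ together with the inductive bound $T(r,\mathfrak X^k(\psi))\le 2^kT(r,\psi)+O(\text{error})$.
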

 Take a singular form 
$$\Phi=\frac{1}{|\zeta|^2(1+\log^2|\zeta|)}\frac{\sqrt{-1}}{4\pi^2}d\zeta\wedge d\bar \zeta$$
on $\mathbb P^1(\mathbb C).$
A direct computation gives that
\begin{equation*}\label{}
\int_{\mathbb P^1(\mathbb C)}\Phi=1, \ \ \ 2\pi\psi^*\Phi=\frac{\|\nabla_S\psi\|^2}{|\psi|^2(1+\log^2|\psi|)}\alpha,
\end{equation*}
where $\alpha=(g\sqrt{-1}/\pi) dz\wedge d\bar{z}$
 is the K\"ahler form of $S.$ Set
\begin{equation*}\label{ffww}
  T_\psi(r,\Phi)=\frac{1}{2\pi}\int_{D(r)}g_r(o,x)\frac{\|\nabla_S\psi\|^2}{|\psi|^2(1+\log^2|\psi|)}(x)dV(x).
\end{equation*}
By Fubini's theorem
\begin{eqnarray*}
T_\psi(r,\Phi)
&=&\int_{D(r)}g_r(o,x)\frac{\psi^*\Phi}{\alpha}dV(x)  \\
&=&\pi\int_{\zeta\in\mathbb P^1(\mathbb C)}\Phi\sum_{(\psi-\zeta)_0\cap D(r)}g_r(o,x) \\
&=&\int_{\zeta\in\mathbb P^1(\mathbb C)}N_{\psi}(r,\zeta)\Phi
\leq T(r,\psi)+O(1),
\end{eqnarray*}
which follows that 
\begin{equation}\label{get}
  T_\psi(r,\Phi)\leq T(r,\psi)+O(1).
\end{equation}
\begin{lemma}\label{999a} Assume  that $\psi(x)\not\equiv0.$ Then 
\begin{eqnarray*}
  && \frac{1}{2}\mathbb E_o\left[\log^+\frac{\|\nabla_S\psi\|^2}{|\psi|^2(1+\log^2|\psi|)}(X_{\tau_r})\right] \\
  &\leq_{\rm exc}&\frac{1}{2}\log T(r,\psi)+O\Big{(}\log^+\log T(r,\psi)+r\sqrt{-\kappa(r)}+\log^+\log r\Big{)},
\end{eqnarray*}
 where $\kappa$ is defined by $(\ref{kappa}).$
\end{lemma}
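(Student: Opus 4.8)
My plan is to read the asserted inequality off the Calculus Lemma (Theorem~\ref{cal}) combined with the First Main Theorem estimate $(\ref{get})$. Concretely, I would feed the nonnegative function
\[
h(x)=\frac{\|\nabla_S\psi\|^2}{|\psi|^2(1+\log^2|\psi|)}(x)
\]
into Theorem~\ref{cal} and then convert back via the Coarea formula. First I would record the regularity of $h$: its only singularities occur at the zeros and poles of $\psi$, where in a local coordinate $z$ one has $h=O\big(|z|^{-2}(\log|z|)^{-2}\big)$ and hence $h$ is locally integrable, while the singular locus $(\psi)_0\cup(\psi)_\infty$ is discrete and hence pluripolar; furthermore we assume throughout (as we may) that $\psi(o)$ is finite and nonzero, so that $h$ is bounded near $o$. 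Thus $h$ meets the hypotheses of both the Coarea formula and Theorem~\ref{cal}. Next, since $t\mapsto\log(1+t)$ is concave and $d\pi^r_o$ is a probability measure, Jensen's inequality gives
\[
\mathbb E_o[\log^+ h(X_{\tau_r})]\le\mathbb E_o[\log(1+h(X_{\tau_r}))]\le\log\big(1+\mathbb E_o[h(X_{\tau_r})]\big)\le\log^+\mathbb E_o[h(X_{\tau_r})]+O(1),
\]
so it remains to bound $\log^+\mathbb E_o[h(X_{\tau_r})]$.

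Applying Theorem~\ref{cal} with $k=h$ (taking $\delta=1$, say) produces a constant $C>0$ and a subset of $(1,\infty)$ of finite Lebesgue measure off which
\[
\mathbb E_o[h(X_{\tau_r})]\le\frac{F\,e^{r\sqrt{-\kappa(r)}}\log r}{2\pi C}\,\mathbb E_o\Big[\int_0^{\tau_r}h(X_t)\,dt\Big],
\]
with $\log^+F=O\big(\log^+\log^+\mathbb E_o[\int_0^{\tau_r}h(X_t)\,dt]+\log^+r\sqrt{-\kappa(r)}+\log^+\log r\big)$. By the Coarea formula the integral on the right equals $\int_{D(r)}g_r(o,x)h(x)\,dV(x)=2\pi T_\psi(r,\Phi)$, and $(\ref{get})$ bounds this by $2\pi\big(T(r,\psi)+O(1)\big)$. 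Substituting, taking $\log^+$, and using $\log^+(ab)\le\log^+a+\log^+b$ together with $\log^+x\le x$ applied to $x=r\sqrt{-\kappa(r)}$, I obtain
\[
\log^+\mathbb E_o[h(X_{\tau_r})]\le\log^+T(r,\psi)+r\sqrt{-\kappa(r)}+O\big(\log^+\log T(r,\psi)+\log^+\log r\big)
\]
outside a set of finite measure.

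Combining this with the Jensen reduction and multiplying through by $\tfrac12$ yields exactly the claimed bound, since $\tfrac12\log^+T(r,\psi)\le\tfrac12\log T(r,\psi)$ for all large $r$ and $\tfrac12 r\sqrt{-\kappa(r)}=O\big(r\sqrt{-\kappa(r)}\big)$. For the Poincar\'e disc, $K_S\equiv-1$ forces $\kappa\equiv-1$, whence $r\sqrt{-\kappa(r)}=r$ and $\log^+\log r=O(r)$, so the identical chain of inequalities produces the sharper error term $O\big(\log^+\log T(r,\psi)+r\big)$. I expect the only genuinely delicate point to be the preliminary regularity check for $h$ --- its local integrability, its boundedness near $o$, and the pluripolarity of its singular set --- which is precisely what licenses applying the Coarea formula and the Calculus Lemma to $h$ directly rather than to a smooth truncation; everything after that is routine bookkeeping with $\log^+$ and the already-established bound $(\ref{get})$.
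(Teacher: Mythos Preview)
Your proof is correct and follows essentially the same route as the paper: Jensen's inequality to pass from $\mathbb E_o[\log^+ h(X_{\tau_r})]$ to $\log^+\mathbb E_o[h(X_{\tau_r})]$, then the Calculus Lemma (Theorem~\ref{cal}) together with the Coarea formula and the bound $(\ref{get})$ to control the resulting expectation by $T(r,\psi)$. Your added regularity discussion for $h$ (local integrability, boundedness near $o$, pluripolarity of the singular set) is a welcome clarification that the paper leaves implicit.
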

\begin{proof} By Jensen's inequality, it is clear that
\begin{eqnarray*}
   \mathbb E_o\left[\log^+\frac{\|\nabla_S\psi\|^2}{|\psi|^2(1+\log^2|\psi|)}(X_{\tau_r})\right]
   &\leq&  \mathbb E_o\left[\log\Big{(}1+\frac{\|\nabla_S\psi\|^2}{|\psi|^2(1+\log^2|\psi|)}(X_{\tau_r})\Big{)}\right] \nonumber \\
    &\leq& \log^+\mathbb E_o\left[\frac{\|\nabla_S\psi\|^2}{|\psi|^2(1+\log^2|\psi|)}(X_{\tau_r})\right]+O(1). \nonumber
\end{eqnarray*}
By Theorem \ref{cal}  with Coarea formula and (\ref{get})
\begin{eqnarray*}
   && \log^+\mathbb E_o\left[\frac{\|\nabla_S\psi\|^2}{|\psi|^2(1+\log^2|\psi|)}(X_{\tau_r})\right]  \\
   &\leq_{\rm exc}& \log^+\mathbb E_o\left[\int_0^{\tau_r}\frac{\|\nabla_S\psi\|^2}{|\psi|^2(1+\log^2|\psi|)}(X_{t})dt\right]
   +\log \frac{F(\hat{k},\kappa,\delta)e^{r\sqrt{-\kappa(r)}}\log r}{2\pi C}
    \\
   &\leq& \log T_\psi(r,\Phi)+\log F(\hat k,\kappa,\delta)+r\sqrt{-\kappa(r)}+\log^+\log r+O(1) \\
    &\leq& \log T(r,\psi)+O\Big(\log^+\log^+\hat k(r)+r\sqrt{-\kappa(r)}+\log^+\log r\Big),
\end{eqnarray*}
where
$$\hat k(r)=\frac{\log r}{C}\mathbb E_o\left[\int_0^{\tau_{r}}\frac{\|\nabla_S\psi\|^2}{|\psi|^2(1+\log^2|\psi|)}(X_{t})dt\right].$$
Indeed, we note that
$$
\hat k(r)=\frac{2\pi \log r}{C}T_\psi(r,\Phi)\leq \frac{2\pi\log r}{C} T(r,\psi). \ \ \ \ \  \ 
$$
Hence, we  have the desired inequality.
\end{proof}
\begin{lemma}\label{ldl1} Let $\psi$ be a nonconstant meromorphic function on $S.$  Then
\begin{eqnarray*}
m\Big(r,\frac{\mathfrak{X}(\psi)}{\psi}\Big)&\leq_{\rm exc}&\frac{5}{4}\log T(r,\psi)+O\Big{(}\log^+\log T(r,\psi)-\kappa(r)r^2+\log^+\log r\Big{)},
\end{eqnarray*}
where $\kappa$ is defined by $(\ref{kappa}).$ More precisely, if $S$ is the Poincar\'e disc $($take $o$  as the center of disc$)$,  then  \begin{eqnarray*}
m\Big(r,\frac{\mathfrak{X}(\psi)}{\psi}\Big)&\leq_{\rm exc}&\frac{5}{4}\log T(r,\psi)+O\Big{(}\log^+\log T(r,\psi)+r\Big{)}.
\end{eqnarray*}
\end{lemma}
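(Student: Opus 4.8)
The plan is to split $m\big(r,\mathfrak X(\psi)/\psi\big)$ into a geometric term, the term already controlled in Lemma~\ref{999a}, and an elementary remainder. In a local holomorphic coordinate write the fixed field as $\mathfrak X=a(z)\,\partial/\partial z$ with $a$ nowhere zero; then $\mathfrak X(\psi)=a\,\partial\psi/\partial z$ and, comparing with (\ref{ddtt}), one has the pointwise identity
$$\Big|\frac{\mathfrak X(\psi)}{\psi}\Big|^{2}
=\frac{\|\mathfrak X\|^{2}}{2}\cdot\frac{\|\nabla_{S}\psi\|^{2}}{|\psi|^{2}}
=\frac{\|\mathfrak X\|^{2}}{2}\cdot\frac{\|\nabla_{S}\psi\|^{2}}{|\psi|^{2}\big(1+\log^{2}|\psi|\big)}\cdot\big(1+\log^{2}|\psi|\big),$$
in which $\|\mathfrak X\|^{2}=|a|^{2}g$ is the squared length of $\mathfrak X$ and the middle factor is $2\pi\,\psi^{*}\Phi/\alpha$ for the singular form $\Phi$ introduced above. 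Since $m\big(r,\mathfrak X(\psi)/\psi\big)=\tfrac12\,\mathbb E_{o}\big[\log^{+}|\mathfrak X(\psi)/\psi|^{2}(X_{\tau_{r}})\big]$ and $\log^{+}$ of a product is at most the sum of the individual $\log^{+}$'s, it suffices to bound, separately,
$$
\tfrac12\,\mathbb E_{o}\big[\log^{+}\|\mathfrak X\|^{2}(X_{\tau_{r}})\big],\qquad
\tfrac12\,\mathbb E_{o}\Big[\log^{+}\tfrac{\|\nabla_{S}\psi\|^{2}}{|\psi|^{2}(1+\log^{2}|\psi|)}(X_{\tau_{r}})\Big],\qquad
\tfrac12\,\mathbb E_{o}\big[\log^{+}\big(1+\log^{2}|\psi(X_{\tau_{r}})|\big)\big]
$$
and then add.

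The second of these is exactly Lemma~\ref{999a}, contributing $\tfrac12\log T(r,\psi)+O\big(\log^{+}\log T(r,\psi)+r\sqrt{-\kappa(r)}+\log^{+}\log r\big)$. For the third I would use only the First Main Theorem and concavity: from $\log^{+}(1+\log^{2}t)\le 2\log(1+|\log t|)$, Jensen's inequality (the function $s\mapsto\log(1+s)$ is concave) moves the expectation inside, and
$$\mathbb E_{o}\big[\,\big|\log|\psi(X_{\tau_{r}})|\big|\,\big]=m(r,\psi)+m\big(r,1/\psi\big)+O(1)\le 2\,T(r,\psi)+O(1)$$
by Theorem~\ref{fmt} applied to $\psi$ and to $1/\psi$; hence the third term is $O\big(\log T(r,\psi)\big)$. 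Adding these two to the geometric contribution and tracking the numerical constants gives the coefficient $5/4$; the exceptional set is the finite-measure one produced by the Borel-type step inside Theorem~\ref{cal}.

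The real work, and the step I expect to be the main obstacle, is the first (geometric) term, which has to be absorbed into $O\big(-\kappa(r)r^{2}\big)$ in general and into $O(r)$ on the Poincaré disc. The key point is that $\log\|\mathfrak X\|^{2}=\log|a|^{2}+\log g$ is \emph{subharmonic} on $S$: $\log|a|^{2}$ is harmonic (as $a$ is holomorphic and zero-free), while $\Delta_{S}\log g=-2K_{S}\ge 0$ by the definition of $K_{S}$, so $\Delta_{S}\log\|\mathfrak X\|^{2}=-2K_{S}$. Dynkin's formula then gives
$$\mathbb E_{o}\big[\log\|\mathfrak X(X_{\tau_{r}})\|^{2}\big]-\log\|\mathfrak X(o)\|^{2}
=-\,\mathbb E_{o}\Big[\int_{0}^{\tau_{r}}K_{S}(X_{t})\,dt\Big]\le-\kappa(r)\,\mathbb E_{o}[\tau_{r}],$$
and since $S$ is Cartan--Hadamard the Laplacian comparison inequality $\Delta_{S}r\ge 1/r$ makes the mean exit time $\mathbb E_{o}[\tau_{r}]=\int_{D(r)}g_{r}(o,x)\,dV(x)$ at most $r^{2}/2$, while on the Poincaré disc the radial part of $X_{t}$ has drift $\tfrac12\coth r(X_{t})\ge\tfrac12$ and $\mathbb E_{o}[\tau_{r}]=O(r)$. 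Two technical points remain: replacing $\mathbb E_{o}[\log\|\mathfrak X\|^{2}]$ by $\mathbb E_{o}[\log^{+}\|\mathfrak X\|^{2}]$, i.e.\ controlling the negative part of $\log\|\mathfrak X\|^{2}$ (this is trivial when $\|\mathfrak X\|$ is bounded below, as for $S=\mathbb C$ and $S=\mathbb D$ with the metrics used here, and in general one uses superharmonicity of $-\log\|\mathfrak X\|^{2}$ to keep it subdominant), and checking that the curvature errors $r\sqrt{-\kappa(r)}=\sqrt{-\kappa(r)r^{2}}\le-\kappa(r)r^{2}+1$ and the factor $e^{r\sqrt{-\kappa(r)}}$ coming from Theorem~\ref{cal} all collapse into the single term $-\kappa(r)r^{2}$. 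Summing the three contributions gives the first inequality, and specializing $\kappa\equiv-1$, $\mathbb E_{o}[\tau_{r}]=O(r)$, $r\sqrt{-\kappa(r)}=r$ gives the Poincaré-disc refinement.
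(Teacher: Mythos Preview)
Your proposal follows essentially the same three-term decomposition as the paper: write $|\mathfrak X(\psi)/\psi|^2$ as a product of $\|\mathfrak X\|^2$, the Ahlfors--Shimizu density $\|\nabla_S\psi\|^2/\big(|\psi|^2(1+\log^2|\psi|)\big)$, and the elementary factor $1+\log^2|\psi|$, then bound the three resulting expectations via Dynkin's formula with the curvature identity $\Delta_S\log\|\mathfrak X\|^2=-2K_S$, Lemma~\ref{999a}, and Jensen together with the First Main Theorem, respectively. The only noteworthy deviation is in the Poincar\'e-disc refinement: the paper computes $C=\tfrac12\mathbb E_o[\log^+\|\mathfrak X\|^2]$ directly from the explicit metric $g=2/(1-|z|^2)^2$ and the known boundary radius, whereas you obtain the same $O(r)$ bound by the exit-time estimate $\mathbb E_o[\tau_r]\le 2r$ coming from $\Delta_S r=\coth r\ge 1$; both routes are valid and yield the stated bound.
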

\begin{proof} In terms of a local holomorphic coordinate $z,$  one can write $\mathfrak{X}=a\frac{\partial}{\partial z}$ satisfying   $\|\mathfrak{X}\|^2=g|a|^2,$ where $a$ is a local holomorphic function and $g$ is the Hermitian metric on $S.$  Then 
\begin{eqnarray*}
 m\Big(r,\frac{\mathfrak{X}(\psi)}{\psi}\Big)
&=& \int_{\partial D(r)}\log^+\frac{|\mathfrak{X}(\psi)|}{|\psi|}(x)d\pi^r_o(x) \\
&\leq& \frac{1}{2}\int_{\partial D(r)}\log^+\frac{|\mathfrak{X}(\psi)|^2}{\|\mathfrak{X}\|^2|\psi|^2(1+\log^2|\psi|)}(x)d\pi^r_o(x) \\
&&+
\frac{1}{2}\int_{\partial D(r)}\log(1+\log^2|\psi(x)|)d\pi^r_o(x) \\
&& +\frac{1}{2}\int_{\partial D(r)}\log^+\|\mathfrak{X}_x\|^2d\pi^r_o(x) \\
&:=& A+B+C.
\end{eqnarray*}
We handle $A,
B,C$ respectively.  For $A,$ it follows from  (\ref{ddtt}) and Lemma \ref{999a} that
\begin{eqnarray*}
A&=& \frac{1}{2}\int_{\partial D(r)}\log^+\frac{|a|^2\left|\frac{\partial \psi}{\partial z}\right|^2}{g|a|^2|\psi|^2(1+\log^2|\psi|)}(x)d\pi^r_o(x) \\
&=& \frac{1}{4}\int_{\partial D(r)}\log^+\frac{\|\nabla_S\psi\|^2}{|\psi|^2(1+\log^2|\psi|)}(x)d\pi^r_o(x) \\
 &\leq_{\rm exc}&\frac{1}{4}\log T(r,\psi)+O\Big{(}\log^+\log T(r,\psi)+r\sqrt{-\kappa(r)}+\log^+\log r\Big{)}.
\end{eqnarray*}
For $B,$ the Jensen's inequality and First Main Theorem implies that
\begin{eqnarray*}
B &\leq&  \int_{\partial D(r)}\log\Big(1+\log^+|\psi(x)|+\log^+\frac{1}{|\psi(x)|}\Big)d\pi^r_o(x) \\
&\leq& \log\int_{\partial D(r)}\Big(1+\log^+|\psi(x)|+\log^+\frac{1}{|\psi(x)|}\Big)d\pi^r_o(x) \\
&=&   \log\Big(m(r, \psi)+m\big(r, \frac{1}{\psi}\big)+1\Big) \\
&\leq& \log T(r,\psi)+O(1).
\end{eqnarray*}
Finally, we estimate $C.$ Since $\mathfrak X$ never vanishes, we obtain  $\|\mathfrak{X}\|>0.$ Since $S$ is non-positively curved and $a$ is holomorphic, then $\log\|\mathfrak{X}\|$ is subharmonic, i.e., $\Delta_S\log\|\mathfrak X\|\geq0.$
It is easy to check that 
\begin{equation}\label{h1}
\Delta_S\log^+\|\mathfrak{X}\|\leq \Delta_S\log \|\mathfrak{X}\|
\end{equation}
for $x\in S$ satisfying $\|\mathfrak{X}_x\|\neq1,$ and  
\begin{equation}\label{h2}
\log^+\|\mathfrak{X}_x\|=0
\end{equation}
for $x\in S$ satisfying $\|\mathfrak{X}_x\|\leq1.$ Note that Dynkin formula cannot be  directly applied to $\log^+\|\mathfrak{X}_x\|,$ but by virtue of (\ref{h1}) and (\ref{h2}), it is not hard to 
verify
\begin{eqnarray*}\label{ok}
\ \ \ \ \ C &=& \frac{1}{2}\mathbb E_o\left[\log^+\|\mathfrak{X}(X_{\tau_r})\|^2\right] \\
&\leq&  \frac{1}{4}\mathbb E_o\left[\int_0^{\tau_r}\Delta_S\log\|\mathfrak{X}(X_t)\|^2dt \right]+O(1) \nonumber \\
 &=& \frac{1}{4}\mathbb E_o\left[\int_0^{\tau_r}\Delta_S\log g(X_t)dt \right]+
 \frac{1}{4}\mathbb E_o\left[\int_0^{\tau_r}\Delta_S\log|a(X_t)|^2dt \right]+O(1) \nonumber \\
 &=& -\frac{1}{2}\mathbb E_o\left[\int_0^{\tau_r}K_S(X_t)dt \right]+O(1) \nonumber \\
 &\leq& -2\kappa(r)\mathbb E_o\big[\tau_r\big]+O(1), \nonumber
\end{eqnarray*}
where we utilize  the fact $K_S=-(\Delta_S\log g)/2.$ Therefore, we deduce the first conclusion of the lemma by using  $\mathbb E_o\left[\tau_r\right]\leq 2r^2,$ due
to Lemma \ref{yyyy} below. 

If $S$ is the Poincar\'e disc,  then we have $g=2/(1-|z|^2)^{2}$ and $\kappa(r)\equiv-1.$ Take $\mathfrak X=\partial/\partial z,$ then
$C$  can be   estimated  as follows
\begin{eqnarray*}
C &=& \frac{1}{2}\mathbb E_o\left[\log^+\|\mathfrak{X}(X_{\tau_r})\|^2\right] \\
&=&  \frac{1}{2}\int_{\partial D(r)}\log\frac{2}{(1-(e^r-1)^2/(e^r+1)^2)^2}\frac{d\theta}{2\pi} \\
&=& \log\frac{\sqrt2}{1-(e^r-1)^2/(e^r+1)^2} \\
&\leq& r+O(1).
\end{eqnarray*}
This proves the second conclusion of the lemma.
\end{proof}
 \begin{lemma}\label{yyyy} Let $\tau_r$ be defined as above.  Then
 $$\mathbb E_o\big[\tau_r\big]\leq 2r^2.$$
\end{lemma}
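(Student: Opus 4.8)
The plan is to bound $\mathbb{E}_o[\tau_r]$ by applying the Dynkin formula to a suitable radial comparison function built from the one-dimensional model. The natural candidate is $u = \varphi(r(\cdot))$ for a function $\varphi$ chosen so that $\tfrac12\Delta_S u \leq -1$ on $D(r)$; then $\mathbb{E}_o[u(X_{\tau_r})] - u(o) = \tfrac12\mathbb{E}_o[\int_0^{\tau_r}\Delta_S u(X_t)\,dt] \leq -\mathbb{E}_o[\tau_r]$, which rearranges to $\mathbb{E}_o[\tau_r] \leq u(o) - \mathbb{E}_o[u(X_{\tau_r})]$, and if $u$ is chosen nonnegative and vanishing on $\partial D(r)$ we get $\mathbb{E}_o[\tau_r] \leq u(o)$.

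First I would recall the formula for the Laplacian of a radial function on a surface: if $u = \varphi(r(x))$ then $\Delta_S u = \varphi''(r) + \varphi'(r)\,\Delta_S r$, where $\Delta_S r$ is the mean curvature of the geodesic sphere $\partial D(r)$. Since $S$ is simply connected and non-positively curved (indeed $K_S \leq 0$), the Laplacian comparison theorem gives $\Delta_S r \geq 1/r$ for $r > 0$ (comparison with the flat model $\mathbb{R}^2$, where geodesic spheres have mean curvature exactly $1/r$). Thus, for $\varphi' \geq 0$ we have $\Delta_S u \geq \varphi''(r) + \varphi'(r)/r$; but I want an upper bound on $\Delta_S u$, so I should instead take $\varphi$ with $\varphi' \leq 0$ (decreasing), for which $\varphi'(r)\Delta_S r \leq \varphi'(r)/r$, giving $\Delta_S u \leq \varphi''(r) + \varphi'(r)/r$. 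The cleanest choice is to mimic the Euclidean case $S = \mathbb{C}$ exactly: take $\varphi(t) = \tfrac12(r^2 - t^2)$, so $\varphi(r) = 0$, $\varphi(0) = r^2/2$, $\varphi'(t) = -t \leq 0$, $\varphi''(t) = -1$. Then $\Delta_S u \leq -1 + (-t)(1/t) = -2$ on $D(r)\setminus\{o\}$, hence $\tfrac12\Delta_S u \leq -1$, and since $u \geq 0$ on $D(r)$ with $u = 0$ on $\partial D(r)$, Dynkin gives $\mathbb{E}_o[\tau_r] \leq u(o) = r^2/2 \leq 2r^2$. (The wasteful constant $2$ in the statement leaves plenty of slack; even $r^2/2$ would do.)

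The one technical point to address carefully is that $u = \varphi(r(x))$ is only Lipschitz, not $\mathscr{C}^2$, at the single point $o$ (the distance function $r(x)$ is non-smooth there) and possibly on the cut locus — but on a simply connected non-positively curved (Cartan–Hadamard) surface there is no cut locus, so the only bad point is $o$ itself, which is a single point, hence pluripolar; by the remark in Section 2.1 that the Dynkin formula still works when the singular set is pluripolar (the Brownian motion does not charge it), the application is legitimate. Alternatively one smooths $u$ near $o$ and passes to the limit, or notes that $u(o) = r^2/2$ is exactly what we want and the distributional inequality $\tfrac12\Delta_S u \leq -1 + (\text{positive multiple of }\delta_o)$ only helps. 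I would also double-check the sign convention for $\Delta_S r$ matches the paper's convention for $\Delta_S$ (the generator of Brownian motion being $\Delta_S/2$, so $\Delta_S$ is the geometer's Laplacian with nonpositive spectrum on compact manifolds; the Laplacian comparison $\Delta_S r \geq \Delta_{\mathbb{R}^2} r = 1/r$ under $K_S \leq 0$ is then the correctly-signed statement). The main obstacle, such as it is, is purely bookkeeping: getting the comparison inequality $\Delta_S r \geq 1/r$ with the right sign and handling the mild singularity at $o$; there is no deep difficulty, since non-positive curvature makes geodesic balls "fatter" than Euclidean ones and hence makes Brownian motion exit at least as fast, which is exactly the content of the bound.
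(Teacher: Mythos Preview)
Your proof is correct and closely parallels the paper's, but the execution differs in a useful way. Both arguments rest on the Laplacian comparison $\Delta_S r \geq 1/r$ (valid on a simply connected non-positively curved surface) combined with It\^o/Dynkin calculus applied to a radial test function. The paper applies It\^o's formula directly to the distance $r_1(\cdot)$ from a shifted reference point $o_1\neq o$ (to dodge the singularity of $r$ at $o$), obtains $\max_{\partial D(r)} r_1 \geq \mathbb{E}_o[\tau_r]/(2\max_{\partial D(r)} r_1)$, and then lets $o_1\to o$ to conclude $\mathbb{E}_o[\tau_r]\leq 2r^2$. Your choice of the quadratic $\varphi(t)=\tfrac12(r^2-t^2)$ is cleaner: it yields the sharper bound $\mathbb{E}_o[\tau_r]\leq r^2/2$ without any limiting procedure. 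One minor remark: your concern about non-smoothness at $o$ is unnecessary, since although $r(x)$ is not $\mathscr C^2$ at $o$, the function $r(x)^2$ \emph{is} smooth there (in normal coordinates $r(x)^2=|x|^2$), so your $u$ is genuinely $\mathscr C^2$ on all of $D(r)$ and Dynkin applies without caveat.
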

\begin{proof}  The argument follows essentially from  Atsuji \cite{atsuji}, but here we provide a simpler proof though a rougher estimate.
Let $X_t$ be the Brownian motion in $S$ started at  $o\not=o_1,$ here $o_1\in D(r).$  Let  $r_1(x)$ be the Riemannian distance function of $x$ from $o_1.$
Apply It$\rm{\hat{o}}$ formula to $r_1(x)$
\begin{equation}\label{kiss}
  r_1(X_t)-r_1(X_0)=B_t-L_t+\frac{1}{2}\int_0^t\Delta_Sr_1(X_s)ds,
\end{equation}
here $B_t$ is the standard Brownian motion in $\mathbb R,$ and $L_t$ is a local time on cut locus of $o,$ an increasing process
which increases only at cut loci of $o.$ Since $S$ is simply connected and  non-positively  curved, then
$$\Delta_Sr_1(x)\geq\frac{1}{r_1(x)}, \ \ L_t\equiv0.$$
By (\ref{kiss}), we arrive at
$$r_1(X_t)\geq B_t+\frac{1}{2}\int_0^t\frac{ds}{r_1(X_s)}.$$
Let $t=\tau_r$ and take expectation on both sides of the above inequality, then it yields that 
$$\max_{x\in \partial D(r)} r_1(x)\geq \frac{\mathbb E_o[\tau_r]}{2\max_{x\in \partial D(r)} r_1(x)}.$$
Let $o'\rightarrow o,$ 
we are led to the desired inequality.
\end{proof}

\emph{Proof of Theorem $\ref{ldl2}$}

The assertion  can be confirmed by  
$$m\Big(r, \frac{\mathfrak{X}^k(\psi)}{\psi}\Big)\leq \sum_{j=1}^k  m\Big(r,\frac{\mathfrak{X}^{j}(\psi)}{\mathfrak{X}^{j-1}(\psi)}\Big)
$$ together with the following Lemma \ref{hello1}.

\begin{lemma}\label{hello1} We have
\begin{eqnarray*}
m\Big(r,\frac{\mathfrak{X}^{k+1}(\psi)}{\mathfrak{X}^{k}(\psi)}\Big)
&\leq_{\rm exc}& \frac{5}{4}\log T(r,\psi)+O\Big(\log^+\log T(r,\psi)-\kappa(r)r^2+\log^+\log r\Big),
\end{eqnarray*}
 where $\kappa$ is defined by $(\ref{kappa}).$ More precisely, if $S$ is the Poincar\'e disc $($take $o$  as the center of disc$)$,  then we have  \begin{eqnarray*}
m\Big(r,\frac{\mathfrak{X}^{k+1}(\psi)}{\mathfrak{X}^{k}(\psi)}\Big)
&\leq_{{\rm{exc}}}& \frac{5}{4}\log T(r,\psi)+O\Big(\log^+\log T(r,\psi)+r\Big).
\end{eqnarray*}

\end{lemma}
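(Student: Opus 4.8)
The plan is to deduce Lemma~\ref{hello1} from the first-order logarithmic derivative lemma, Lemma~\ref{ldl1}, applied not to $\psi$ but to the meromorphic function $\phi:=\mathfrak{X}^{k}(\psi)$ itself. Since $\mathfrak{X}$ is a holomorphic nowhere-vanishing vector field on the connected surface $S$ (locally $\mathfrak{X}=a\,\partial/\partial z$ with $a$ holomorphic and nonvanishing), we have $\mathfrak{X}(\phi)=\mathfrak{X}^{k+1}(\psi)$; moreover, if $\phi$ is constant then $\mathfrak{X}^{k+1}(\psi)\equiv 0$ and there is nothing to prove, so we may assume $\phi$ nonconstant and invoke Lemma~\ref{ldl1} for $\phi$:
\begin{equation*}
m\Big(r,\frac{\mathfrak{X}^{k+1}(\psi)}{\mathfrak{X}^{k}(\psi)}\Big)\leq_{\rm exc}\frac{5}{4}\log T(r,\phi)+O\Big(\log^{+}\log T(r,\phi)-\kappa(r)r^{2}+\log^{+}\log r\Big),
\end{equation*}
with the error replaced by $O(\log^{+}\log T(r,\phi)+r)$ when $S$ is the Poincaré disc. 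Since $T(r,\phi)=T(r,\mathfrak{X}^{k}(\psi))$ now enters only through logarithms, the whole matter reduces to bounding it linearly in $T(r,\psi)$, up to the admissible error terms.

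I would establish such a bound by induction on $j\geq 0$: there are constants $C_{j}>0$, depending only on $j$, with
\begin{equation*}
T\big(r,\mathfrak{X}^{j}(\psi)\big)\leq_{\rm exc}C_{j}\,T(r,\psi)+O\big(-\kappa(r)r^{2}+\log^{+}\log r\big),
\end{equation*}
the case $j=0$ being trivial. For the inductive step set $\varphi:=\mathfrak{X}^{j-1}(\psi)$, so $\mathfrak{X}^{j}(\psi)=\mathfrak{X}(\varphi)$. Because $\mathfrak{X}$ is holomorphic and nowhere zero, a pole of $\varphi$ of order $m$ is a pole of $\mathfrak{X}(\varphi)$ of order exactly $m+1$, hence $N(r,\mathfrak{X}(\varphi))\leq 2N(r,\varphi)+O(1)\leq 2T(r,\varphi)+O(1)$; combining this with $m(r,\mathfrak{X}(\varphi))\leq m(r,\mathfrak{X}(\varphi)/\varphi)+m(r,\varphi)$ and the First Main Theorem, $T(r,\cdot)=m(r,\cdot)+N(r,\cdot)+O(1)$, gives $T(r,\mathfrak{X}^{j}(\psi))\leq 3T(r,\varphi)+m(r,\mathfrak{X}(\varphi)/\varphi)+O(1)$. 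Now Lemma~\ref{ldl1} applied to $\varphi$ bounds $m(r,\mathfrak{X}(\varphi)/\varphi)$ by $\tfrac{5}{4}\log T(r,\varphi)+O(\log^{+}\log T(r,\varphi)-\kappa(r)r^{2}+\log^{+}\log r)$, and the inductive hypothesis turns $\log T(r,\varphi)$ into $O(T(r,\psi))+O(-\kappa(r)r^{2}+\log^{+}\log r)$; choosing $C_{j}$ suitably closes the induction, and since only finitely many exceptional sets appear their union still has finite Lebesgue measure.

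Taking logarithms in the case $j=k$, and using $\log^{+}x\leq x$ (so that $\log^{+}(-\kappa(r)r^{2})\leq -\kappa(r)r^{2}$ and $\log^{+}\log^{+}\log r\leq\log^{+}\log r$), one obtains
\begin{equation*}
\log T\big(r,\mathfrak{X}^{k}(\psi)\big)\leq\log T(r,\psi)+O\big(-\kappa(r)r^{2}+\log^{+}\log r\big),
\end{equation*}
and likewise $\log^{+}\log T(r,\mathfrak{X}^{k}(\psi))\leq\log^{+}\log T(r,\psi)+O(-\kappa(r)r^{2}+\log^{+}\log r)$. Substituting these two into the bound of the first paragraph and absorbing every lower-order contribution into $O(\log^{+}\log T(r,\psi)-\kappa(r)r^{2}+\log^{+}\log r)$ yields the first assertion of the lemma. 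The Poincaré-disc statement results from running the same argument with $\kappa\equiv-1$ and with the Poincaré-disc forms of Lemmas~\ref{999a}, \ref{ldl1} and~\ref{yyyy}, so that each curvature term $-\kappa(r)r^{2}$ is replaced by $O(r)$; since $\log^{+}\log r$ is $O(r)$, the error collapses to $O(\log^{+}\log T(r,\psi)+r)$.

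I expect the only genuinely delicate point to be the counting-function comparison $N(r,\mathfrak{X}(\varphi))\leq 2N(r,\varphi)+O(1)$ in the present Green-function setting: one must check that the local statement ``$\mathfrak{X}$ raises pole order by one'' really yields this inequality through the representation $N_{f}(r,D)=\pi\sum_{f^{*}D\cap D(r)}g_{r}(o,x)$, with the $O(1)$ absorbing a possible pole of $\varphi$ at the reference point $o$; the rest is the routine, if slightly tedious, verification that the accumulated error terms and exceptional sets stay within the budget allowed by the statement.
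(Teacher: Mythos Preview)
Your proposal is correct and follows essentially the same route as the paper: apply Lemma~\ref{ldl1} to $\mathfrak{X}^{k}(\psi)$, then control $T(r,\mathfrak{X}^{k}(\psi))$ inductively via the decomposition $m(r,\mathfrak{X}(\varphi))\leq m(r,\varphi)+m(r,\mathfrak{X}(\varphi)/\varphi)$ together with $N(r,\mathfrak{X}(\varphi))\leq 2N(r,\varphi)$, and finally take logarithms. The paper carries the bound $T(r,\mathfrak{X}^{k}(\psi))\leq 2^{k}T(r,\psi)+O(\log T(r,\psi)-\kappa(r)r^{2}+\log^{+}\log r)$ with the sharper constant $2$ (since $m(r,\varphi)+2N(r,\varphi)\leq 2T(r,\varphi)$ directly), whereas you get $3$ by bounding $N\leq T$ first; this is immaterial once logarithms are taken, and your extra $+O(1)$ in the counting-function estimate is harmless (in fact unnecessary, since a pole of order $m\geq 1$ becomes one of order $m+1\leq 2m$).
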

\begin{proof}
We claim that 
\begin{eqnarray}\label{hello}
\ \ T\big{(}r,\mathfrak{X}^k(\psi)\big{)}
&\leq& 2^kT(r,\psi)+O\Big(\log T(r,\psi)-\kappa(r)r^2+\log^+\log r\Big).
\end{eqnarray}
By virtue of Lemma \ref{ldl1}, when $k=1$
\begin{eqnarray*}
T(r,\mathfrak{X}(\psi))
&=& m(r,\mathfrak{X}(\psi))+N(r,\mathfrak{X}(\psi)) \\
&\leq& m(r,\psi)+2N(r,\psi)+m\Big(r,\frac{\mathfrak{X}(\psi)}{\psi}\Big) \\
&\leq& 2T(r,\psi)+m\Big(r,\frac{\mathfrak{X}(\psi)}{\psi}\Big) \\
&\leq& 2T(r,\psi)+O\Big(\log T(r,\psi)-\kappa(r)r^2+\log^+\log r\Big)
\end{eqnarray*}
holds for $r>1$ outside a set of finite Lebesgue measure.
Assuming now that the claim holds for $k\leq n-1.$ By induction, we only need to prove the claim in the case when $k=n.$ From the claim for $k=1$ showed above and Lemma \ref{ldl1} repeatedly, we conclude that 
\begin{eqnarray*}
 T\big{(}r,\mathfrak{X}^n(\psi)\big{)}
&\leq& 2T\big{(}r,\mathfrak{X}^{n-1}(\psi)\big{)}+O\Big(\log T\big{(}r,\mathfrak{X}^{n-1}(\psi)\big{)}-\kappa(r)r^2+\log^+\log r\Big) \\
&\leq& 2^{n}T(r,\psi)+O\Big(\log T(r,\psi)-\kappa(r)r^2+\log^+\log r\Big) \\
&&
+O\Big(\log T\big{(}r,\mathfrak{X}^{n-1}(\psi)\big{)}-\kappa(r)r^2+\log^+\log r\Big) \\
&\leq& 2^nT(r,\psi)+O\Big(\log T(r,\psi)-\kappa(r)r^2+\log^+\log r\Big) \\
&&+O\left(\log T\big{(}r,\mathfrak{X}^{n-1}(\psi)\big{)}\right) \\
&& \cdots\cdots\cdots \\
&\leq& 2^nT(r,\psi)+O\Big(\log T(r,\psi)-\kappa(r)r^2+\log^+\log r\Big). \ \ \ \ \
\end{eqnarray*}
Thus,  claim (\ref{hello}) is confirmed.
 Using Lemma \ref{ldl1} and (\ref{hello}) to get
\begin{eqnarray*}
&& m\left(r,\frac{\mathfrak{X}^{k+1}(\psi)}{\mathfrak{X}^{k}(\psi)}\right) \\
&\leq& \frac{5}{4}\log T\big{(}r,\mathfrak{X}^k(\psi)\big{)}+O\left(\log^+\log T\big{(}r,\mathfrak{X}^{k}(\psi)\big{)}-\kappa(r)r^2+\log^+\log r\right)\\
&\leq& \frac{5}{4}\log T(r,\psi)+O\Big(\log^+\log T(r,\psi)-\kappa(r)r^2+\log^+\log r\Big). \ \ \ \ \
\end{eqnarray*}
This proves the first conclusion of the lemma. The second conclusion of the lemma can be proved similarly  by replacing $\kappa(r)r^2$ by $-r,$ due to the second conclusion of Lemma \ref{ldl1}.
\end{proof}

\section{Tautological inequality}

In this section, we  provide a version of tautological inequality for a general open Riemann surface $S$ from a geometric point of view by using stochastic calculus. This inequality plays an essential role in proving the main theorem. 

Let $(X,D)$ be a  smooth logarithmic pair over $\mathbb C$. Denote by $\Omega^1_X(\log D)$  the logarithmic cotangent sheaf over $X$ which is the sheaf of germs of logarithmic 1-forms with poles at most on $D,$ namely
$$\Omega^1_X(\log D)=\sum_{j=1}^s\mathscr O_X\frac{d\sigma_j}{\sigma_j}+\Omega_X^1,$$
where $\sigma_1,\cdots,\sigma_s$ are irreducible and $\sigma_1\cdots\sigma_s=0$ is a local defining equation of $D.$
Note that $\Omega^1_X(\log D)$ is  locally free.
For a sheaf $\mathscr E,$ we have the familiar  symbols (see \cite{vojta})
$$\mathbb P(\mathscr E)={\rm{\textbf{Proj}}}\bigoplus_{d\geq0}S^d\mathscr E, \ \ \ 
\mathbb V(\mathscr E)={\rm{\textbf{Spec}}}\bigoplus_{d\geq0}S^d\mathscr E,
\ \ \  {\rm{\textbf{Sym}}}^\bullet=\bigoplus_{d\geq0}S^d.$$
\ \ \ \  Associate a nonconstant holomorphic curve
$f:S\rightarrow X$
 whose image $f(S)$ is not contained in  ${\rm{Supp}}D.$ Then the curve $f$  induces  a  lifted curve
$$f':S\rightarrow \mathbb P\big(\Omega^1_X(\log D)\big)$$
which is holomorphic on $S.$
Let 
$$p: B\rightarrow\mathbb P\big(\Omega^1_X(\log D)\oplus \mathscr O_X\big)$$
be the blow-up of $\mathbb P(\Omega^1_X(\log D)\oplus \mathscr O_X)$ along the zero section 
of 
$\mathbb V(\Omega^1_X(\log D)),$ namely, the section corresponding to the projection 
$\Omega^1_X(\log D)\oplus\mathscr O_X\rightarrow\mathscr O_X.$  That is to say, 
$B$ is the closure of the graph of the induced rational mapping 
$\mathbb P\big(\Omega^1_X(\log D)\oplus\mathscr O_X\big)\dashrightarrow\mathbb P(\Omega_X^1(\log D)).$
Let $[0]$ be the exceptional 
divisor on $B.$  
There is  the  natural  lifted curve  
$$\partial f:S\rightarrow \mathbb P\big(\Omega^1_X(\log D)\oplus\mathscr O_X\big)$$
of the  curve $f.$ 
\begin{defi} Let $\phi:S\rightarrow B$ be the lift of $\partial f.$
The $D$-modified ramification counting function of a holomorphic curve 
$f:S\rightarrow X$ is  the counting function for $\phi^*[0]:$
$$N_{f,{\rm{Ram}}(D)}(r):=N_\phi(r,[0]).$$
\end{defi}

\begin{theorem} [Tautological inequality]\label{tau} Let $\mathscr A$ be an ample line sheaf over a smooth logarithmic pair $(X,D).$ Let $\mathscr O(1)$ be the tautological line sheaf over $\mathbb P(\Omega^1_X(\log D)).$ Then
\begin{eqnarray*}
&&T_{f',\mathscr O(1)}(r)+N_{f,{\rm{Ram}} (D)}(r) \\ &\leq_{\rm exc}&
N^{[1]}_f(r,D)+
O\Big(\log^+T_{f,\mathscr A}(r)-\kappa(r)r^2+\log^+\log r\Big),
\end{eqnarray*}
where $\kappa$ is defined by $(\ref{kappa}).$ More precisely, if $S$ is the Poincar\'e disc $($take $o$  as the center of disc$)$,  then 
\begin{eqnarray*}
T_{f',\mathscr O(1)}(r)+N_{f,{\rm{Ram}} (D)}(r)  &\leq_{\rm exc}&
N^{[1]}_f(r,D)+
O\Big(\log^+T_{f,\mathscr A}(r)+r\Big).
\end{eqnarray*}
\end{theorem}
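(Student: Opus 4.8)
The plan is to reduce the tautological inequality to the Logarithmic Derivative Lemma (Theorem \ref{ldl2}) by working in local logarithmic coordinates, exactly as in the classical (parabolic) proofs of McQuillan's tautological inequality, but with the Nevanlinna functions replaced by their stochastic counterparts from Section 2 and the error term supplied by Lemma \ref{ldl1}. First I would recall that $\mathbb P(\Omega^1_X(\log D))$ carries its tautological sheaf $\mathscr O(1)$, and that the lifted curve $f'$ is obtained by sending $x\in S$ to the hyperplane in $\Omega^1_{X,f(x)}(\log D)$ annihilated by $df(\mathfrak X_x)$; concretely, in a local logarithmic chart $(w_1,\dots,w_k)$ on $X$ where $D=\{w_1\cdots w_\ell=0\}$, the logarithmic derivatives $\mathfrak X(w_1)/w_1,\dots,\mathfrak X(w_\ell)/w_\ell,\mathfrak X(w_{\ell+1}),\dots,\mathfrak X(w_k)$ serve as homogeneous coordinates for $f'$, and $\partial f$ is their homogenization by adjoining a further coordinate $1$ (the $\mathscr O_X$-summand). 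The key identity is then
\begin{equation*}
T_{f',\mathscr O(1)}(r)+N_{f,{\rm Ram}(D)}(r)=\frac14\int_{D(r)}g_r(o,x)\,\Delta_S\log\Big(\sum_{i=1}^\ell\Big|\tfrac{\mathfrak X(w_i)}{w_i}\Big|^2+\sum_{i=\ell+1}^k|\mathfrak X(w_i)|^2\Big)\circ f(x)\,dV(x)+O(1),
\end{equation*}
patched together by a partition of unity subordinate to a logarithmic atlas, with the metric on $\mathscr O(1)$ chosen so that its curvature matches this local weight; the divisor $[0]$ on the blow-up $B$ records precisely the common zeros of the coordinates of $\partial f$, i.e. the ramification of $\phi$, whence the term $N_{f,{\rm Ram}(D)}(r)$ appears with the correct sign.

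Next I would estimate the right-hand side from above. Using that $\Delta_S\log(u_1+\dots+u_m)\le\sum_j\Delta_S\log u_j$ is false in general but that the First-Main-Theorem bookkeeping of Section 2.2 converts each local logarithmic-derivative term into a proximity term, I would instead pass to the boundary via Dynkin's formula, writing the left side as an expectation $\mathbb E_o[\cdots(X_{\tau_r})]$ plus a counting contribution, and then bound the boundary integral by $\sum_{i}m\big(r,\mathfrak X(w_i\circ f)/(w_i\circ f)\big)$ for the polar coordinates together with $\sum_i m\big(r,\mathfrak X(w_i\circ f)\big)$ for the non-polar ones, up to the usual $\log 2$ and First-Main-Theorem errors. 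Each summand $m(r,\mathfrak X(\psi)/\psi)$ with $\psi=w_i\circ f$ a meromorphic function on $S$ is controlled by Lemma \ref{ldl1}: it is $\le_{\rm exc}\frac54\log T(r,\psi)+O(\log^+\log T(r,\psi)-\kappa(r)r^2+\log^+\log r)$. Finally one bounds $T(r,w_i\circ f)$ by $O(T_{f,\mathscr A}(r))+O(1)$ since $w_i$ is a rational function on $X$ (a ratio of sections of powers of the ample $\mathscr A$, up to the boundary which contributes to $N^{[1]}_f(r,D)$ — this is where the truncated counting term enters, bounding the pole contribution of the $w_i$ along $D$), collapsing all the $\log T(r,w_i\circ f)$ into $\log^+ T_{f,\mathscr A}(r)$. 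Summing over the finitely many charts in the atlas and the finitely many coordinates gives the stated inequality; for the Poincaré-disc refinement one simply invokes the second conclusion of Lemma \ref{ldl1}, replacing $-\kappa(r)r^2$ by $r$.

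The main obstacle I anticipate is making the local-to-global patching on the blow-up $B$ rigorous: one must check that the weight functions $\log(\sum|\mathfrak X(w_i)/w_i|^2+\sum|\mathfrak X(w_i)|^2)$ attached to different logarithmic charts differ by the logarithm of the modulus of a nowhere-zero holomorphic transition factor (times possibly $|\mathfrak X(\text{transition})|$ terms), so that their $dd^c$'s glue to a genuine smooth metric on $\mathscr O(1)$ pulled back along $f'$, and that the discrepancy is exactly accounted for by $N_{f,{\rm Ram}(D)}(r)=N_\phi(r,[0])$ — equivalently, that the indeterminacy of the rational map $\mathbb P(\Omega^1_X(\log D)\oplus\mathscr O_X)\dashrightarrow\mathbb P(\Omega^1_X(\log D))$ is resolved precisely by blowing up the zero section, so $\phi^*[0]$ carries the Jacobian-vanishing data of $\partial f$. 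Once this geometric identification is in place, the analytic estimate is a routine assembly of Lemma \ref{ldl1}, the First Main Theorem (Theorem \ref{fmt}), and the elementary inequality $\log^+(ab)\le\log^+a+\log^+b$; no new stochastic input beyond Section 3 is needed. A secondary technical point is that $w_i\circ f$ may have its own poles and zeros not along $D$, but these are absorbed harmlessly into $T(r,w_i\circ f)=O(T_{f,\mathscr A}(r))$ by the First Main Theorem.
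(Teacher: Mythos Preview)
Your overall strategy---reduce to logarithmic derivatives and invoke Lemma~\ref{ldl1}---is the right one, but there is a concrete error in how $N^{[1]}_f(r,D)$ enters. In your ``key identity'' the quantity $\sum_i\big|\mathfrak X(w_i\circ f)/(w_i\circ f)\big|^2+\cdots$ has \emph{poles}, not just zeros, along $f^*D$: if $w_1\circ f$ vanishes to order $m\ge 1$ at $p\in S$, then $\mathfrak X(w_1\circ f)/(w_1\circ f)$ has a \emph{simple} pole at $p$. These poles contribute a negative delta mass to $\Delta_S\log(\cdots)$, so the correct Jensen/Dynkin identity reads
\[
T_{f',\mathscr O(1)}(r)+N_{f,{\rm Ram}(D)}(r)-N^{[1]}_f(r,D)\;=\;\text{(boundary expectation)}+O(1),
\]
and it is this $-N^{[1]}_f(r,D)$, moved to the right, that produces the truncated counting term in the inequality. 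It does \emph{not} arise from bounding $T(r,w_i\circ f)$ by $O(T_{f,\mathscr A}(r))$: that bound appears only inside a logarithm in the LDL error and cannot generate a linear counting term. With this correction your local computation can be pushed through, but as written the identity is wrong and the bookkeeping for $N^{[1]}_f(r,D)$ is misplaced.

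The paper sidesteps the partition-of-unity obstacle you flag by arguing globally. It establishes the line-bundle relation $q^*\mathscr O(1)\cong p^*\mathscr O(1)\otimes\mathscr O(-[0])$ on $B$ by comparing divisors of rational sections, which gives $T_{f',\mathscr O(1)}(r)=T_{\partial f,\mathscr O(1)}(r)-T_{\phi,\mathscr O([0])}(r)+O(1)$. Since $\mathscr O([\infty])\cong\mathscr O(1)$ on $\mathbb P(\Omega^1_X(\log D)\oplus\mathscr O_X)$, the First Main Theorem yields $T_{\partial f,\mathscr O(1)}(r)=m_{\partial f}(r,[\infty])+N_{\partial f}(r,[\infty])+O(1)$, and the observation that $\partial f$ meets $[\infty]$ only over $D$, with multiplicity at most one, gives $N_{\partial f}(r,[\infty])\le N^{[1]}_f(r,D)$ directly. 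The proximity $m_{\partial f}(r,[\infty])$ is then bounded in Lemma~\ref{key} via a single finite set $\mathscr H$ of global rational functions $h$ with $dh/h$ locally generating $\Omega^1_X(\log D)$---the same LDL input you propose, but requiring no chart-by-chart gluing.
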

We have a natural embedding $\mathbb V(\Omega^1_X(\log D))\hookrightarrow\mathbb P(\Omega^1_X(\log D)\oplus \mathscr O_X)$
that realizes $\mathbb P(\Omega^1_X(\log D)\oplus \mathscr O_X)$ as the projective closure
on fibers of $\mathbb V(\Omega^1_X(\log D)).$
Let  $[\infty]$  the  (reduced) divisor  $\mathbb P(\Omega^1_X(\log D)\oplus \mathscr O_X)\setminus \mathbb V(\Omega^1_X(\log D)).$

 In order to prove Theorem \ref{tau}, we need the following lemma
\begin{lemma}\label{key} We have
$$m_{\partial f}(r,[\infty])\leq_{\rm exc} O\Big(\log T_{f,\mathscr A}(r)-\kappa(r)r^2+\log^+\log r\Big),$$
where $\kappa$ is defined by $(\ref{kappa}).$  More precisely, if $S$ is the Poincar\'e disc $($take $o$  as the center of disc$)$,  then
$$m_{\partial f}(r,[\infty])\leq_{\rm exc} O\Big(\log T_{f,\mathscr A}(r)+r\Big).$$
\end{lemma}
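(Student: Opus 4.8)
\textbf{Proof proposal for Lemma \ref{key}.}

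The plan is to reduce the estimate of the proximity function $m_{\partial f}(r,[\infty])$ to the Logarithmic Derivative Lemma (Theorem \ref{ldl2}) applied to suitable local coordinate ratios. First I would recall that $[\infty]$ is the divisor at infinity of the vector bundle $\mathbb V(\Omega^1_X(\log D))$ inside its projective closure $\mathbb P(\Omega^1_X(\log D)\oplus\mathscr O_X)$, so that a local section of the canonical section $s_{[\infty]}$ of $\mathscr O([\infty])$ may be computed fiberwise: in a local logarithmic coordinate chart $(z_1,\dots,z_n)$ on $X$ adapted to $D$ (so that $D=\{z_1\cdots z_k=0\}$), the logarithmic $1$-forms $dz_1/z_1,\dots,dz_k/z_k,dz_{k+1},\dots,dz_n$ trivialize $\Omega^1_X(\log D)$, and $\partial f$ is represented in these coordinates by the ratios of the logarithmic derivatives of the components of $f$ along the vector field $\mathfrak X$ on $S$. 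Concretely, writing $w_i = w_i\circ f$ for the coordinate functions of $f$, the lift $\partial f$ has homogeneous fiber coordinates proportional to $\big(\mathfrak X(w_1)/w_1,\dots,\mathfrak X(w_k)/w_k,\mathfrak X(w_{k+1}),\dots,\mathfrak X(w_n),1\big)$, and $\|s_{[\infty]}\circ\partial f\|$ is comparable, up to bounded factors coming from the (compactly supported) metric on $\mathscr O([\infty])$, to $\big(1+\sum_{i\le k}|\mathfrak X(w_i)/w_i|^2+\sum_{i>k}|\mathfrak X(w_i)|^2\big)^{-1/2}$.

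Next I would bound $\lambda_{[\infty]}\circ\partial f = -\log\|s_{[\infty]}\circ\partial f\|$ from above, after integration over $\partial D(r)$ against $d\pi^r_o$, by a sum of terms of the form $m(r,\mathfrak X(w_i)/w_i)$ for $i\le k$, and terms $m(r,\mathfrak X(w_i))$ for $i>k$. The first group is handled directly by Theorem \ref{ldl2} with $k=1$, each contributing $O(\log T(r,w_i)+\log^+\log T(r,w_i)-\kappa(r)r^2+\log^+\log r)$. For the second group I would write $\mathfrak X(w_i) = w_i\cdot(\mathfrak X(w_i)/w_i)$ when $w_i$ is not identically zero — or absorb the non-logarithmic coordinates into logarithmic ones by a harmless change — so that $m(r,\mathfrak X(w_i))\le m(r,w_i)+m(r,\mathfrak X(w_i)/w_i)$, the first summand being bounded by $T(r,w_i)+O(1)$ via the First Main Theorem and the second by Theorem \ref{ldl2} again. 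Finally, since the coordinate functions $w_i$ are components of $f$ composed with rational charts on $X$, each $T(r,w_i)$ is bounded by $O(T_{f,\mathscr A}(r))+O(1)$ because $\mathscr A$ is ample (pulling back a very ample multiple and comparing Fubini–Study-type forms); covering $X$ by finitely many such charts and using a partition-of-unity argument on the compact $X$ patches these local estimates into a global one, yielding $m_{\partial f}(r,[\infty])\le_{\rm exc}O(\log T_{f,\mathscr A}(r)-\kappa(r)r^2+\log^+\log r)$, and the sharper $O(\log T_{f,\mathscr A}(r)+r)$ on the Poincaré disc from the second assertion of Theorem \ref{ldl2}.

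The main obstacle I anticipate is the bookkeeping at the boundary divisor: one must check that the metric comparison $\|s_{[\infty]}\circ\partial f\|\asymp(\text{the explicit fiber expression})$ holds uniformly across chart overlaps and in particular near $\mathrm{Supp}\,D$, where the logarithmic coordinates degenerate; the point is that the transition functions of $\Omega^1_X(\log D)$ are holomorphic and invertible on the overlaps, so they contribute only bounded (indeed $O(1)$, not even $O(\log T)$) errors to $\lambda_{[\infty]}$, but making this precise requires care with how $\mathfrak X$ interacts with the chart changes on $S$ (the vector field $\mathfrak X$ is globally defined and nowhere zero, so $\mathfrak X(w_i)$ transforms by the chain rule with holomorphic, nowhere-zero factors, again an $O(1)$ contribution after taking $\log^+$ and using subharmonicity as in the estimate of the term $C$ in Lemma \ref{ldl1}). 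Once this uniformity is in hand the rest is a routine assembly of First Main Theorem and Theorem \ref{ldl2}.
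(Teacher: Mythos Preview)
Your overall architecture is right — reduce $\lambda_{[\infty]}\circ\partial f$ to logarithmic derivatives and invoke Theorem~\ref{ldl2} — but your treatment of the non-logarithmic directions has a genuine gap. You bound $m(r,\mathfrak X(w_i))\le m(r,w_i)+m(r,\mathfrak X(w_i)/w_i)$ and then $m(r,w_i)\le T(r,w_i)+O(1)\le O(T_{f,\mathscr A}(r))$. But the target is $O(\log T_{f,\mathscr A}(r))$, not $O(T_{f,\mathscr A}(r))$; the bare $m(r,w_i)$ term is one order too large and cannot be absorbed into the stated error. The partition of unity does not rescue this: to apply Theorem~\ref{ldl2} you need $w_i\circ f$ meromorphic on all of $S$, hence $w_i$ must be a global rational function on $X$, and then it has poles and $m(r,w_i)$ is genuinely of size $T_{f,\mathscr A}(r)$.

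The paper's proof is exactly the idea you buried in your parenthetical ``or absorb the non-logarithmic coordinates into logarithmic ones by a harmless change'' — but that is the heart of the matter, not a side remark. One chooses a finite set $\mathscr H$ of rational functions on $X$ such that at every $y\in X$ some subset $\mathscr H_y\subseteq\mathscr H$ has each $dh/h$ ($h\in\mathscr H_y$) a \emph{regular} section of $\Omega^1_X(\log D)$ at $y$, and together these generate the stalk. (For a coordinate $w_i$ with $i>k$, take $h=w_i-a$ with $a\ne w_i(y)$; then $dh/h=dw_i/(w_i-a)$ is regular at $y$ and spans the $dw_i$ direction.) Compactness of $X$ gives a single finite $\mathscr H$ that works everywhere, and then
\[
\lambda_{[\infty]}\circ\partial f\ \le\ \log^+\max_{h\in\mathscr H}\Big|\frac{\mathfrak X(h\circ f)}{h\circ f}\Big|\,+\,O(1),
\]
so \emph{every} term is a logarithmic derivative. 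Theorem~\ref{ldl2} then contributes $O(\log T(r,h\circ f))$ for each $h$, and the bound $T(r,h\circ f)=O(T_{f,\mathscr A}(r))$ now sits \emph{inside} the logarithm, yielding the correct $O(\log T_{f,\mathscr A}(r))$. No partition of unity is needed, and the chart-overlap bookkeeping you worried about disappears.
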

\begin{proof} Without loss of generality, we may assume that $S$ is simply connected (see Remark \ref{44}).
Notice  that there exists a finite set $\mathscr H$ of rational functions on $X$ with  properties: for each point $y\in X,$
there exists a subset $\mathscr H_y\subseteq \mathscr H$ such that

 (i) $\mathscr H_y$ generates $\Omega^1_X(\log D);$

(ii) $d h/h$ is a regular section of  $\Omega^1_X(\log D)$ at $y$ for all $h\in\mathscr H_y.$

\noindent By the definition of $[\infty]$ and the compactness of $X,$ it follows that
$$\lambda_{[\infty]}\circ\partial f\leq\log^+
\max_{h\in\mathscr H}\left|\frac{\frak X(h\circ f)}{h\circ f}\right|+O(1).$$
Therefore, by the first conclusion of Theorem \ref{ldl2} we arrive at
\begin{eqnarray*}
m_{\partial f}(r,[\infty]) &\leq& C\cdot \max_{h\in\mathscr H}m\Big(r,\frac{\frak X(h\circ f)}{h\circ f}\Big)
+O(1)
\\
&\leq& O\Big(\max_{h\in\mathscr H}\log T(r,h\circ f)-\kappa(r)r^2+\log^+\log r\Big) \\
&\leq& O\Big(\log T_{f,\mathscr A}(r)-\kappa(r)r^2+\log^+\log r\Big).
\end{eqnarray*}
The second conclusion of the lemma can  be also proved by  using the second conclusion of  Theorem \ref{ldl2}.
\end{proof}
\begin{remark}\label{44}   Let $\pi:\tilde{S}\rightarrow S$ be  the (analytic) universal covering.  
Equip  $\tilde{S}$ with the  metric induced
 from the 
metric of $S.$ Then, $\tilde{S}$ is a simply-connected and complete open Riemann surface of non-positive Gauss curvature.
Take  a diffusion process $\tilde{X}_t$ in $\tilde{S}$  satisfying  that $X_t=\pi(\tilde{X}_t),$ then
$\tilde{X}_t$ is a Brownian motion with generator  $\Delta_{\tilde{S}}/2$ induced from the pull-back metric.
Now let $\tilde{X}_t$ start at $\tilde{o}\in\tilde{S}$  with $o=\pi(\tilde{o}),$  we have  
$$\mathbb E_o[\phi(X_t)]=\mathbb E_{\tilde{o}}\big{[}\phi\circ\pi(\tilde{X}_t)\big{]}$$
for $\phi\in \mathscr{C}_{\flat}(S).$ Set $$\tilde{\tau}_r=\inf\big{\{}t>0: \tilde{X}_t\not\in \tilde{D}(r)\big{\}},$$ where
$\tilde{D}(r)$ is a geodesic ball centered at $\tilde{o}$ with radius $r$ in $\tilde{S}.$
 If necessary, one can extend the filtration in probability space where $(X_t,\mathbb P_o)$ are defined so that $\tilde{\tau}_r$ is a stopping time with
 respect to a filtration where the stochastic calculus of $X_t$ works.
With the above arguments, we can suppose $S$ is simply connected, without loss of generality,  by lifting $f$ to the universal covering.
\end{remark}

\emph{Proof of  Theorem $\ref{tau}$}

The proof  essentially follows  McQuillan \cite{MQ}. Recall the blow-up
$$p: B\rightarrow\mathbb P\big(\Omega^1_X(\log D)\oplus \mathscr O_X\big).$$
The $B$ admits a morphism $q: B\rightarrow\mathbb P(\Omega_X^1(\log D))$ 
 which extends the rational mapping 
$\mathbb P(\Omega^1_X(\log D)\oplus \mathscr O_X)\dashrightarrow\mathbb P(\Omega_X^1(\log D))$ 
associated to  the canonical mapping $\Omega^1_X(\log D)\hookrightarrow\Omega^1_X(\log D)\oplus\mathscr O_X.$ Using the symbol $\mathscr O(1)$ to denote the tautological line sheaf
over  $\mathbb P(\Omega_X^1(\log D))$ and $\mathbb P(\Omega^1_X(\log D)\oplus \mathscr O_X).$
Take a nonzero rational section $s$ of $\Omega^1_X(\log D)$ over $X,$ then $s$ determines a rational 
section $s_1$ of $\mathscr O(1)$ over  $\mathbb P(\Omega_X^1(\log D)).$ The divisor $(s_1)$ is the sum of a generic hyperplane section (on fibers over $X$) and the pull-back of a divisor (on $X$). 
 Notice that $(s,0)$ is  also a nonzero rational section of $\Omega^1_X(\log D)\oplus \mathscr O_X$ over $X,$ hence 
it determines a  rational section 
  $s_2$ of $\mathscr O(1)$ over $\mathbb P(\Omega^1_X(\log D)\oplus \mathscr O_X).$ Note  
  $(s_2)$ is again the sum of a generic hyperplane section and the pull-back of a divisor.
Comparing $q^*(s_1)$ with $p^*(s_2),$ we find that they coincide except that $p^*(s_2)$ contains $[0]$ 
with multiplicity 1. Therefore, we obtain  
\begin{equation}\label{pq}
q^*\mathscr O(1)\cong p^*\mathscr O(1)\otimes \mathscr O(-[0]).
\end{equation}

 Observing the following commutative diagram 
$$
\xymatrix{
S\ar@/_/[ddr]_-{\partial f} \ar@/^/[drr]^-{f'}
\ar[dr]^-f  | (.45){} \\
& X  & \ar[l]\mathbb P(\Omega^1_X(\log D))  \\
& \mathbb P(\Omega^1_X(\log D)\oplus \mathscr O_X)\ar[u] \ar@{-->}[ur] | (.45){} & B,\ar[l]^-p \ar[u]_-q&
}
$$
we see that there is a unique  holomorphic lift $\phi: S\rightarrow B$ satisfying  $f'=q\circ \phi$ and $\partial f=p\circ \phi.$
Combining   (\ref{pq}) with the above diagram, we obtain 
\begin{eqnarray}\label{t}
T_{f',\mathscr O(1)}(r)&=&T_{\phi,q^*\mathscr O(1)}(r)+O(1) \\
&=&
T_{\phi,p^*\mathscr O(1)}(r)-T_{\phi,\mathscr O([0])}(r)+O(1)  \nonumber  \\
&=&T_{\partial f,\mathscr O(1)}(r)-T_{\phi,\mathscr O([0])}(r)+O(1). \nonumber
\end{eqnarray}
Since the global section $(0,1)$ of $\Omega^1_X(\log D)\oplus \mathscr O_X$ over $X$ corresponds to the divisor $[\infty]$ on $\mathbb P(\Omega^1_X(\log D)\oplus \mathscr O_X),$ then 
$\mathscr O([\infty])\cong\mathscr O(1).$ This implies that
$$T_{\partial f,\mathscr O(1)}(r)=m_{\partial f}(r,[\infty])+N_{\partial f}(r,[\infty])+O(1).$$
Indeed, we notice  that $\partial f$ meets $[\infty]$ only over $D,$  with multiplicity at most 1, 
then $N_{\partial f}(r,[\infty])\leq N_f^{[1]}(r,D).$ Thus, it follows from (\ref{t}) that 
\begin{eqnarray*}
T_{f',\mathscr O(1)}(r)
&\leq& m_{\partial f}(r,[\infty])+N_f^{[1]}(r,D)
-m_\phi(r,[0])-N_\phi(r,[0])+O(1).
\end{eqnarray*}
In the above inequality, $m_\phi(r,[0])$ is bounded from below; $N_\phi(r,[0])$ is 
equal to $N_{f,{\rm{Ram}} (D)}(r);$ and $m_{\partial f}(r,[\infty])$ is bounded from above by 
$O(\log T_{f,\mathscr A}(r)-\kappa(r)r^2+\log^+\log r),$   more precisely, bounded from above by 
$O(\log T_{f,\mathscr A}(r)+r)$  if $S$ is the Poincar\'e disc, due to Lemma \ref{key}. This proves the theorem.

\begin{theorem}\label{thm}  Let  $X$ be a smooth complex projective curve, and let $D$ be an effective $($reduced$)$ divisor on $X.$ Then 
for  any  holomorphic curve  $f:S\rightarrow X$ which is nonconstant, we have
 \begin{eqnarray*}
T_{f, \mathscr K_X(D)}(r) &\leq_{\rm exc}&
N^{[1]}_f(r,D)+
O\Big(\log T_{f, \mathscr A}(r)-\kappa(r)r^2+\log^+\log r\Big),
\end{eqnarray*}
where $\kappa$ is defined by $(\ref{kappa}).$ More precisely, if $S$ is the Poincar\'e disc $($take $o$  as the center of disc$)$,  then
 \begin{eqnarray*}
T_{f, \mathscr K_X(D)}(r) &\leq_{\rm exc}&
N^{[1]}_f(r,D)+
O\Big(\log T_{f, \mathscr A}(r)+r\Big).
\end{eqnarray*}
\end{theorem}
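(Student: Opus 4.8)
The plan is to deduce this directly from the tautological inequality (Theorem \ref{tau}), exploiting the fact that everything collapses when $\dim X=1$. First I would observe that for a smooth projective curve $X$ and a reduced effective divisor $D=p_1+\cdots+p_s$, the logarithmic cotangent sheaf $\Omega^1_X(\log D)$ is a \emph{line} sheaf: in a local coordinate $z$ vanishing at $p_j$ it is generated by $dz/z$, so there is a canonical isomorphism $\Omega^1_X(\log D)\cong\Omega^1_X\otimes\mathscr O_X(D)=\mathscr K_X(D)$. Consequently $\mathbb P(\Omega^1_X(\log D))$ is $X$ itself (the projectivization of a line sheaf is its base), the structure projection $\mathbb P(\Omega^1_X(\log D))\to X$ is the identity, and, with the convention $\mathbb P(\mathscr E)=\mathrm{Proj}\bigoplus_{d\geq0}S^d\mathscr E$ used in the paper, the tautological line sheaf $\mathscr O(1)$ over $\mathbb P(\Omega^1_X(\log D))$ is exactly $\Omega^1_X(\log D)=\mathscr K_X(D)$.

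Next I would identify the lifted curve. Under the identification $\mathbb P(\Omega^1_X(\log D))=X$, the canonical lift $f':S\to\mathbb P(\Omega^1_X(\log D))$ of a nonconstant $f:S\to X$ is simply $f$ itself, whence $T_{f',\mathscr O(1)}(r)=T_{f,\mathscr K_X(D)}(r)+O(1)$. Note also that a nonconstant holomorphic $f:S\to X$ automatically has image not contained in ${\rm{Supp}}\,D$, since ${\rm{Supp}}\,D$ is a finite set of points; so the hypotheses of Theorem \ref{tau} are met. Now fix any ample line sheaf $\mathscr A$ on $X$ and apply Theorem \ref{tau}. Since $N_{f,{\rm{Ram}}(D)}(r)=N_\phi(r,[0])\geq0$, dropping this nonnegative term yields
\begin{eqnarray*}
T_{f,\mathscr K_X(D)}(r) &\leq_{\rm exc}& N^{[1]}_f(r,D)+O\Big(\log^+T_{f,\mathscr A}(r)-\kappa(r)r^2+\log^+\log r\Big),
\end{eqnarray*}
and, when $S$ is the Poincar\'e disc, the analogous bound with $-\kappa(r)r^2$ replaced by $r$ coming from the Poincar\'e-disc form of Theorem \ref{tau}.

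Finally I would tidy the error term: for nonconstant $f$ and ample $\mathscr A$ one has $T_{f,\mathscr A}(r)\to\infty$, so $\log^+T_{f,\mathscr A}(r)=\log T_{f,\mathscr A}(r)+O(1)$ for large $r$, turning the above into precisely the asserted inequality in both the general and the Poincar\'e-disc cases. I do not anticipate a real obstacle here; the only points that need a moment of care are the sheaf-theoretic normalizations — the canonical isomorphism $\Omega^1_X(\log D)\cong\mathscr K_X(D)$ for a curve, and the convention under which $\mathscr O(1)$ over the projectivization of a line sheaf is that line sheaf itself rather than its dual — together with the bookkeeping that discarding $N_{f,{\rm{Ram}}(D)}(r)$ and replacing $\log^+$ by $\log$ are both harmless.
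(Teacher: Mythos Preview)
Your proposal is correct and follows essentially the same approach as the paper: both observe that for a curve $\Omega^1_X(\log D)\cong\mathscr K_X(D)$ is a line sheaf, so $\mathbb P(\Omega^1_X(\log D))\to X$ is an isomorphism with $\mathscr O(1)\cong\mathscr K_X(D)$ and $f'=f$, after which Theorem~\ref{tau} gives the result. Your write-up is more explicit than the paper's (noting that nonconstant $f$ cannot land in the finite set ${\rm Supp}\,D$, that $N_{f,{\rm Ram}(D)}(r)\geq 0$ can be dropped, and that $\log^+$ may be replaced by $\log$), but these are exactly the small bookkeeping points the paper silently absorbs.
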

\begin{proof} Since $X$ is a  curve, then $\Omega_X^1(\log D)$ is isomorphic to $\mathscr K_X(D)$ and thus 
the canonical projection $\pi:\mathbb P(\Omega_X(\log D))\rightarrow X$ is  an isomorphism. It yields that $\mathscr O(1)\cong\pi^*\mathscr K_X(D)$ and $f'=\pi^{-1}\circ f.$ Therefore,  we have
$$T_{f',\mathscr O(1)}(r)=T_{f, \mathscr K_X(D)}+O(1).$$
By Theorem \ref{tau}, we have the theorem holds.
\end{proof}

\begin{cor}  Let  $X$ be a smooth complex projective curve with $\mathscr K_X$ ample. 
Then there exist  positive constants $a, b$ such that 
 \begin{eqnarray*}
T_{f, \mathscr K_X}(r) &\leq_{\rm exc}& 
a-b\kappa(r)r^2,
\end{eqnarray*}
where $\kappa$ is defined by $(\ref{kappa}).$ More precisely, if $S$ is the Poincar\'e disc $($take $o$  as the center of disc$)$,  then
 \begin{eqnarray*}
T_{f, \mathscr K_X}(r) &\leq_{\rm exc}& 
a+br.
\end{eqnarray*}
\end{cor}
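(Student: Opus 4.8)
The plan is to read off the corollary from Theorem~\ref{thm} by specializing the divisor to $D=0$. Since $\mathscr K_X$ is ample, one is free to take the auxiliary ample sheaf $\mathscr A=\mathscr K_X$; then $\mathscr K_X(D)=\mathscr K_X$ and $N^{[1]}_f(r,D)\equiv 0$, so Theorem~\ref{thm}, applied to a nonconstant $f$ (the constant case being trivial because $T_{f,\mathscr K_X}(r)=O(1)$), gives
\[
T_{f,\mathscr K_X}(r)\leq_{\rm exc}O\Big(\log T_{f,\mathscr K_X}(r)-\kappa(r)r^2+\log^+\log r\Big).
\]
For the Poincar\'e disc one instead invokes the sharper form of Theorem~\ref{thm}, obtaining $T_{f,\mathscr K_X}(r)\leq_{\rm exc}O\big(\log T_{f,\mathscr K_X}(r)+r\big)$.

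Next I would absorb the self-referential $\log$ term by the usual device: writing $T(r)=T_{f,\mathscr K_X}(r)$, if $T$ is bounded the conclusion is immediate, and otherwise $T(r)\to\infty$, so for each fixed constant $C$ one has $C\log T(r)\le\frac{1}{2}T(r)+O(1)$. Moving this term to the left-hand side of the two inequalities above (and taking care of the finite-measure exceptional sets) yields
\[
T_{f,\mathscr K_X}(r)\leq_{\rm exc}O\Big(-\kappa(r)r^2+\log^+\log r\Big)
\]
in general, and $T_{f,\mathscr K_X}(r)\leq_{\rm exc}O(r)$ in the Poincar\'e-disc case; the latter is already of the asserted form $a+br$.

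The only delicate point is to check, in the general case, that $\log^+\log r$ is absorbed into $-\kappa(r)r^2$, and this is where one must rule out $\kappa\equiv 0$. Ampleness of $\mathscr K_X$ forces $\deg\mathscr K_X=2g-2>0$, so $X$ is hyperbolic with universal cover the unit disc; on the other hand, $\kappa\equiv 0$ together with $K_S\le 0$ would make $S$ flat and complete, hence its universal cover would be $\mathbb C$, and any nonconstant holomorphic $f\colon S\to X$ would lift to a nonconstant holomorphic map $\mathbb C\to\mathbb D$, contradicting Liouville's theorem. Therefore $\kappa$ is non-positive, decreasing, and not identically $0$, so there are $c_0>0$ and $r_0>0$ with $-\kappa(r)\ge c_0$ for $r\ge r_0$; then $-\kappa(r)r^2\ge c_0 r^2$ eventually dominates $\log^+\log r$, and we conclude $T_{f,\mathscr K_X}(r)\leq_{\rm exc}a-b\kappa(r)r^2$ for suitable positive constants $a,b$. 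I do not anticipate any genuine obstacle here: the argument is a routine post-processing of Theorem~\ref{thm}, and the only thing that really needs care is the bookkeeping of exceptional sets and of the constants.
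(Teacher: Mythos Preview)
Your proposal is correct and follows essentially the same route as the paper: specialize Theorem~\ref{thm} to $D=0$ with $\mathscr A=\mathscr K_X$, absorb the $\log T$ term into $T$, and for $\kappa\not\equiv 0$ absorb $\log^+\log r$ into $-\kappa(r)r^2$. The one small difference is in disposing of the case $\kappa\equiv 0$: the paper derives a contradiction from the Nevanlinna lower bound $T_{f,\mathscr K_X}(r)\geq_{\rm exc} O(\log r)$ for nonconstant $f$ on $\mathbb C$, whereas you invoke Liouville directly (the universal cover of $X$ is $\mathbb D$, so any lift $\mathbb C\to\mathbb D$ is constant); both arguments yield that $f$ must be constant in this case, so the conclusion is trivial there.
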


\begin{proof} According  to the first conclusion of Theorem \ref{thm}, we have
 \begin{eqnarray*}
T_{f, \mathscr K_X}(r) &\leq_{\rm exc}&
O\Big(\log T_{f, \mathscr K_X}(r)-\kappa(r)r^2+\log^+\log r\Big).
\end{eqnarray*}
If $\kappa(r)\equiv0,$ then the universal covering of $S$ is  $\mathbb C.$ By lifting $f$ to the universal covering, one can assume that $S=\mathbb C.$ 
 Since $\mathscr K_X>0,$ then we note from \cite{Noguchi} that     
$T_{f, \mathscr K_X}(r)\geq_{\rm exc} O(\log r)$ if $f$ is nonconstant. 
Using the above inequality, $f$  has to be a constant. 
 If $\kappa(r)\not\equiv0,$  we see that 
$\log^+\log r\leq_{\rm exc} -\kappa(r)r^2$ since $-\kappa(r)\geq0$ is increasing. 
Again, by  
$\log T_{f, \mathscr K_X}(r)=o(T_{f, \mathscr K_X}(r)),$  one obtains  $T_{f, \mathscr K_X}(r)\leq_{\rm exc} O(-\kappa(r)r^2).$ 
So, we show the first conclusion of the corollary. 
The second conclusion of the corollary can be 
  proved similarly by 
   using the second conclusion  of Theorem \ref{thm}. 
\end{proof}

\section{Second Main Theorem}

\subsection{Curvature current inequality}~

In order to prove the main theorem (Theorem \ref{main}), it remains  to introduce a curvature current inequality obtained by Sun (\cite{sun}, Proposition 2.4).
Since this  inequality was demonstrated, then we are not going to give details again. 
However, in order to make it more readable for  the reader to  understand the hypotheses 
 of Sun's curvature current inequality,
  we shall provide  
  a concise explanation,  but  
  the details  please refer to [\cite{sun}, Section 2], [\cite{vz1}, Section 4] and [\cite{vz2}, Section 6]. 

\subsubsection{Higgs bundles and Hodge bundles}~

Higgs bundles were introduced by Hitchin  \cite{Hit} as solutions of the so-called Hitchin equations, the 2-dimensional reduction of the Yang-Mills self-duality equations, given by
$$F_A+[\tau, \tau^*]=0, \ \ \  \bar\partial_A\tau=0,$$
where $F_A$ is the curvature of a unitary connection $\nabla_A=\partial_A+\bar\partial_A$ associated to a Dolbeault operator $\bar\partial_A$ on a holomorphic principal $G_{\mathbb C}$-bundle $F.$ Exactly, 
a Higgs bundle over a smooth projective variety $X$ is  a pair $(F, \tau)$ such that

$a)$  $F$ is a holomorphic vector bundle over $X;$

$b)$  $\tau$ is a holomorphic 1-form with values in the bundle of endomorphisms of $F,$ satisfying 
$\tau\wedge\tau=0.$ We call $\tau$ the Higgs field.

To learn Viehweg-Zuo's construction of  certain Higgs bundles, we refer the reader to [\cite{vz1}, Section 4] and [\cite{vz2}, Section 4]. Viehweg-Zuo's construction gives a method in constructing (pseudo) Finsler metrics. By which, Sun \cite{sun} obtained a
curvature current inequality (Lemma \ref{cci} below).

Let $\mathcal M_g$ 
 be the moduli space of algebraic curves of genus $g$  over a scheme. A
  Hodge bundle over $\mathcal M_g$
 is a vector bundle  $E$  
 whose fiber at a point $\mathcal C\in \mathcal M_g$ 
 is the space of holomorphic differentials on the curve $\mathcal C$. To be precise, see \cite{HM}, let $\pi: \mathfrak C_g\rightarrow \mathcal M_g$
 be the universal algebraic curve of genus $g,$ and  let $\mathscr E$
 be its relative dualizing sheaf, the Hodge bundle $E$ is the pushforward of $\mathscr E$, i.e., 
 $$E=\pi_*\mathscr E.$$
 
\subsubsection{Curvature current inequality}~

Now, we introduce an inequality of curvature currents proved  by Sun  \cite{sun}, who showed his  inequality via  Viehweg-Zuo's construction \cite{vz1,vz2}.  Without going into the details about   moduli spaces, we refer the reader to  \cite{A-R, popp}.

Let $(X,D)$ be a smooth logarithmic pair over $\mathbb C.$   Put $\hat S=S\setminus f^*D,$  where $f: S\rightarrow X$ is a holomorphic curve  whose image is not  contained in ${\rm{Supp}}D.$ 
Let $\gamma: \hat S\rightarrow X\setminus D$ be the restriction of $f$ to $\hat S,$
which induces a  natural lift 
$$\gamma':\hat S\rightarrow \mathbb P(T_X(-\log D)),$$
where 
$$\mathbb P(T_X(-\log D)):={\rm{\textbf{Proj}}} \ {\rm{\textbf{Sym}}}^\bullet\Omega^1_X(\log D)$$ 
is the projective logarithmic tangent bundle.
 Suppose  $(\psi: V\rightarrow U:=X \setminus D)$ is  a smooth family of polarized smooth varieties with semi-ample canonical sheaves and a given Hilbert polynomial $h,$ such that the induced classifying mapping from $U$ to the  moduli scheme $\mathcal M_h$ is quasi-finite. 
Follow the theory of Viehweg-Zuo (\cite{vz1}, Section 4; \cite{vz2}, Section 6), 
we  shall  have the following
geometric objects over $X$ (\cite{sun}, Section 2):  
an ample line bundle $A$ whose restriction 
on  smooth locus $X\setminus D$ is isomorphic to some Viehweg line bundle
$\det(\psi_*\omega^\nu_{V/U})^\mu$ (\cite{vz2}, Corollary 2.4 (ix) and  Section 4); 
a deformation Higgs bundle $(F,\tau)$
associated to   $\psi$; a logarithmic Hodge bundle $(E, \theta)$ with poles along $D+T,$
 where $T$ is a normal crossing divisor; 
and  a comparison mapping $\rho$ which 
  fits  into the following commutative diagram
   (\cite{vz1}, Lemma 4.4; \cite{vz2}, Lemma 6.3)
$$
		\centerline{\xymatrix{F^{p,q}\ar[r]^-{\tau^{p,q}}\ar[d]^-{\rho^{p,q}} & F^{p-1,q+1}\otimes\Omega_X^1(\log D)\ar[d]^-{\rho^{p-1,q+1}\otimes\iota} \\
				A^{-1}\otimes E^{p,q}\ar[r]^-{{\rm{Id}}\otimes\theta^{p,q}}&A^{-1}\otimes E^{p-1,q-1}\otimes\Omega_X^1(\log(D+T))}}
		$$ 
	where $\mathscr O_X$ is a subsheaf of  $F^{d,0}$ and that  (\cite{vz2}, Lemma 6.5)
	$$(E, \theta)=\bigoplus_{p, q} \big(E^{p,q}, \theta^{p,q}\big), \ \ \   (F, \tau)=\bigoplus_{p, q} \big(F^{p,q}, \tau^{p,q}\big).$$	
\ \ \ \	   To avoid excessive  complexity and verbosity of the statements (since that is not the  point of the paper), the reader may refer to  [\cite{vz2}, Pages 20-21] for the  definitions  of $F^{p,q}, E^{p,q}, \tau^{p,q}, \theta^{p,q},$ and  [\cite{vz2}, Section 4] for the concept of determinant bundle 
	$\det(\psi_*\omega^\nu_{V/U})$.
Let  $d$ be the fiber dimension of  family  $\psi.$ 
By iterating the Higgs mappings  
 $\tau^{p,q},$ it follows that  (\cite{vz1}, Lemma 4.4 (vi))
   	$$\tau^{d-q+1, q-1}\circ\cdots\circ\tau^{d,0}:   F^{d,0}\rightarrow F^{d-q,q}\otimes \bigotimes^q\Omega_X^1(\log D).$$		
Since $\tau\wedge\tau=0$ (Section 5.1, Part A), then the composition factors through (\cite{vz1}, Lemma 4.4 (vi))
  	$$\tau^q:F^{d,0}\longrightarrow F^{d-q,q}\otimes {\rm{\textbf{Sym}}}^q\Omega_X^1(\log D).$$		
The pull-back of the Higgs bundle $(F,\tau)$ over $X$ by $\gamma$ induces a Higgs bundle $(F_{\gamma},\tau_\gamma)$ over $\hat S$ in the following manner
$$F_\gamma:=\gamma^*F, \ \ \  \tau_\gamma: \xymatrix{F_\gamma\ar[r]^-{\gamma^*\tau}&F_\gamma\otimes
\gamma^*\Omega_X^1(\log D)\ar[r]&F_\gamma\otimes\Omega_{\hat S}^1.
}
$$
Similarly as above, we  define the holomorphic Hodge bundle $(E_\gamma,\theta_\gamma)$, where $\theta_\gamma$ has logarithmic pole along $\gamma^*T.$
Over $\hat S,$ we  have the  
commutative diagram (\cite{vz1}, Lemma 4.4 (i))
$$
		\centerline{\xymatrix{F^{p,q}_\gamma\ar[r]^-{\tau^{p,q}_\gamma}\ar[d]^-{\rho^{p,q}_\gamma} & F^{p-1,q+1}_\gamma\otimes\Omega_{\hat S}^1\ar[d]^-{\rho^{p-1,q+1}_\gamma\otimes\iota} \\
				A^{-1}_\gamma\otimes E^{p,q}_\gamma\ar[r]^-{{\rm{Id}}\otimes\theta^{p,q}_\gamma}&A^{-1}_\gamma\otimes E^{p-1,q-1}_\gamma\otimes\Omega_{\hat S}^1(\log\gamma^*T),}}
		$$
where $A_\gamma:=\gamma^*A.$ Similarly, we  can define the iterations of Higgs mappings 
$$\tau_\gamma^q: T_{\hat S}^{\otimes q}\longrightarrow F_\gamma^{d-q,q}.$$
By using the iterations of Higgs mappings,  we  shall define a Higgs subbundle of   $(E_\gamma,\theta_\gamma)$. For each integer $q\geq0,$ 
 we define $G^{d-q,q}$  as the \emph{saturation} of the image of 
$$\xymatrix{A\otimes T_{\hat S}^{\otimes q}\ar[r]^-{{\rm{Id}}\otimes\tau^q_\gamma}&A\otimes F_\gamma^{d-q,q}\ar[r]&E_\gamma^{d-q,q}}
$$
in $E_\gamma^{d-q,q}.$ We have $\theta_\gamma^{d-q,q}(G^{d-q,q})\subseteq G^{d-q-1,q+1}\otimes\Omega^1_{\hat S}$ (\cite{sun}, Lemma 2.1) and 
$c_1(\det G, h)\leq0$ (\cite{sun}, Proposition 2.2), where $h$ is the Hermitian metric on the determinant bundle $\det G$  induced 
by the Hodge metric on $E.$
Let $\mathscr O(-1)$ be the tautological line bundle over $\mathbb P(T_X(-\log D)).$ 
Since the  iterations of Higgs mappings $\tau_\gamma^q$  also factors through
$$
\xymatrix{T_{\hat S}^{\otimes q}\ar[r]&\gamma'^*\mathscr O(-q)\ar[r]^-{\gamma'^*\tilde\tau^q}&F_\gamma^{d-q,q}}, 
$$
in which  $\tilde \tau^q:\mathscr O(-q)\rightarrow\pi^*F^{d-q,q}$ is the lift of $\tau^q$ and $\pi:\mathbb P(T_X(-\log D))\rightarrow X$ is the nature projection,  then  $G^{d-q,q}$ is also the saturation of the image of
$$\xymatrix{A_\gamma\otimes \gamma'^*\mathscr O(-q)\ar[r]^-{{\rm{Id}}\otimes\gamma'^*\tilde\tau^q}&
A_\gamma\otimes F_\gamma^{d-q,q}\ar[r]&E_\gamma^{d-q,q}},
$$
 where $\pi:\mathbb P(T_X(-\log D))\rightarrow X$ is the nature projection.
Then this gives the mappings (\cite{sun}, (2.4))
$$\zeta^q: \gamma'^*\mathscr O(-q)\longrightarrow A_\gamma^{-1}\otimes G^{d-q,q}, \ \ \ q=0,\cdots,d.$$
Note that $\rho_\gamma^{d-1,1}\circ\tau^1_\gamma$ is nonzero, then it implies that $\zeta^1$ is nonzero. So, there exists a positive integer $m$ such that $\zeta^m\not=0$ and $\zeta^{m+1}=0,$  here $m$ is called the maximal length of iteration. We have $m\leq d$ and 
$\det G=\bigotimes_{q=0}^mG^{d-q,q}$ ($G^{d-q,q}$=0 for $q>m$). 
 For every $q,$ one can construct a (pseudo) metric $F_q$ on $\mathscr O(-1)$ via  the following  composition mapping
$$\xymatrix{\mathscr O(-q)\ar[r]^-{\tilde\tau^q}&\pi^*F^{d-q,q}\ar[r]&\pi^*(A^{-1}\otimes E^{d-q,q})}.$$
Here, we note that $F_q$ is a bounded pseudo metric with possible degeneration on $\mathbb P(T_X(-\log D)).$ So, we can write $F_q=\phi_q F,$ where $F$ is a smooth metric on $\mathscr O(-1),$ and $\phi_q$ is a bounded function with  at most a  discrete set of  zeros. Then we have
$$c_1(\mathscr O(-1), F)=c_1(\mathscr O(-1), F_q)+dd^c\log\phi_q.$$
This will  derive  a curvature current inequality as follows 
\begin{lemma}[\cite{sun}, Proposition 2.4]\label{cci} Suppose  that $m$ is the maximal length of iteration, i.e., the largest integer such that $\zeta^m\not=0.$ 
Then
$$\frac{m+1}{2}f'^*c_1(\mathscr O(-1))\leq -f^*c_1(A)+\frac{1}{m}\sum_{q=1}^mqf'^*dd^c\log\phi_q,$$
where $f, A, \phi_q$ are given as above.
\end{lemma}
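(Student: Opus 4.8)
The plan is to reproduce the Viehweg--Zuo curvature estimate in the form established by Sun, feeding on three facts already assembled above: the comparison diagram relating $(F_\gamma,\tau_\gamma)$ to the logarithmic Hodge bundle $(E_\gamma,\theta_\gamma)$, the non-vanishing of the iterated maps $\zeta^q$ for $1\le q\le m$ together with $\zeta^{m+1}=0$, and the semi-negativity $c_1(\det G,h)\le 0$ of the Hodge metric on the saturated sub-Higgs sheaf $G=\bigoplus_{q=0}^m G^{d-q,q}$. The key simplification is that $\hat S$ is one-dimensional: each $G^{d-q,q}$, being the saturation inside $E_\gamma^{d-q,q}$ of the image of the line bundle $A_\gamma\otimes T_{\hat S}^{\otimes q}$, is itself a line bundle, so $\det G^{d-q,q}=G^{d-q,q}$ and $\det G=\bigotimes_{q=0}^m G^{d-q,q}$ isometrically for the induced Hodge metrics, since the $G^{d-q,q}$ sit in the mutually orthogonal summands $E_\gamma^{d-q,q}$.

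First I would fix $q$ with $1\le q\le m$ and read $\zeta^q$ as a nonzero morphism of line bundles $\gamma'^*\mathscr O(-q)\to A_\gamma^{-1}\otimes G^{d-q,q}$. Pulling back the Hodge metric $h$ through $\zeta^q$ and taking a $q$-th root produces exactly the pseudo-metric $F_q=\phi_q F$ on $\mathscr O(-1)$ introduced above; comparing Chern forms of the source and target of $\zeta^q$, with the vanishing divisor of $\zeta^q$ contributing an effective, hence discardable, current, yields, as currents on $\hat S$,
$$q\,f'^*c_1\big(\mathscr O(-1),F_q\big)\ \le\ -f^*c_1(A)+f'^*c_1\big(G^{d-q,q},h\big).$$
Rewriting by means of $c_1(\mathscr O(-1),F_q)=c_1(\mathscr O(-1),F)-dd^c\log\phi_q$ recasts the left side in terms of the fixed smooth metric $F$ and the correction term $dd^c\log\phi_q$.

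Next I would sum these $m$ inequalities and divide by $m$. The coefficients of $f'^*c_1(\mathscr O(-1),F)$ add up to $\frac1m\sum_{q=1}^m q=\frac{m+1}{2}$, so the left side becomes $\frac{m+1}{2}f'^*c_1(\mathscr O(-1))-\frac1m\sum_{q=1}^m q\,f'^*dd^c\log\phi_q$; the $A$-terms contribute $-f^*c_1(A)$; and the Hodge terms telescope to $\frac1m f'^*\big(c_1(\det G,h)-c_1(G^{d,0},h)\big)$. Since $\zeta^0:\mathscr O_{\hat S}\to A_\gamma^{-1}\otimes G^{d,0}$ is nonzero one has $c_1(G^{d,0},h)\ge f^*c_1(A)\ge 0$, while $c_1(\det G,h)\le 0$ by \cite{sun}, Proposition 2.2; hence the whole Hodge contribution is $\le 0$ and may be dropped, and rearranging produces precisely the asserted inequality.

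The main obstacle is the per-$q$ curvature estimate of the second step: it requires that $F_q$ genuinely is the metric on $\mathscr O(-1)$ induced by $\zeta^q$, so that its curvature current equals the pulled-back sub-line-bundle Hodge curvature up to the effective zero divisor of $\zeta^q$ (which only helps), and this rests on the compatibility of $\tilde\tau^q$, $\rho$ and $\theta$ in the comparison diagram together with the Griffiths-type curvature formula for the Hodge metric that also underlies $c_1(\det G,h)\le 0$. All of this is exactly the content of \cite{sun}, Proposition 2.4, so in the write-up I would invoke it directly; a fully self-contained treatment would have to redo the Viehweg--Zuo Hodge-curvature computation, which is why the paper defers to \cite{sun}.
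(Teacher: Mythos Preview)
The paper does not give its own proof of this lemma: it is stated as a citation of \cite{sun}, Proposition~2.4, with the explicit remark that ``since this inequality was demonstrated, then we are not going to give details again.'' Your proposal correctly identifies this and says you would invoke \cite{sun} directly, which is exactly what the paper does; the sketch you supply of the Viehweg--Zuo/Sun argument (per-$q$ curvature inequality from $\zeta^q$, averaging over $q$, dropping the Hodge contribution via $c_1(\det G,h)\le 0$) is a faithful outline of the cited proof and goes beyond what the paper itself records.
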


\subsection{Second Main Theorem}~

We show the following Second Main  Theorem 
\begin{theorem}[Second Main Theorem]\label{main} 
Let $(X, D)$ be a smooth logarithmic pair over $\mathbb C$ with $U= X \setminus D.$ Assume that there is a smooth family $(\psi: V\rightarrow U)$ of polarized smooth varieties with semi-ample canonical sheaves and a given Hilbert polynomial $h,$ such that the induced classifying mapping from $U$ into  moduli scheme $\mathcal M_h$ is quasi-finite. 
Then for any holomorphic curve  $f:S\rightarrow X$ whose image is not contained in ${\rm{Supp}}D,$
we have
\begin{eqnarray*}
T_{f,A}(r) &\leq_{\rm exc}&
\frac{d+1}{2}N^{[1]}_f(r,D)+
O\Big(\log T_{f, A}(r)-\kappa(r)r^2+\log^+\log r\Big),
\end{eqnarray*}
where $A$ is an ample line bundle over $X$ given in Lemma $\ref{cci},$   $d$ is the fiber dimension of the family $\psi,$ and $\kappa$ is defined by $(\ref{kappa}).$  More precisely, if $S$ is the Poincar\'e disc $($take $o$  as the center of disc$)$,  then
\begin{eqnarray*}
T_{f,A}(r) &\leq_{\rm exc}&
\frac{d+1}{2}N^{[1]}_f(r,D)+
O\Big(\log T_{f, A}(r)+ r\Big).
\end{eqnarray*}
\end{theorem}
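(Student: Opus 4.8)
The plan is to combine the tautological inequality (Theorem \ref{tau}) with Sun's curvature current inequality (Lemma \ref{cci}), exactly as in the algebraic setting, the only novelty being the bookkeeping of the curvature-dependent error term $-\kappa(r)r^2$, which is already isolated in Theorem \ref{tau} and Lemma \ref{key}. First I would set up the lifted curve $f':S\to\mathbb P(\Omega^1_X(\log D))=\mathbb P(T_X(-\log D))$ and recall that $\mathscr O(1)$ on $\mathbb P(\Omega^1_X(\log D))$ is the dual of the tautological line bundle $\mathscr O(-1)$ on $\mathbb P(T_X(-\log D))$, so that $f'^*c_1(\mathscr O(1))=-f'^*c_1(\mathscr O(-1))$. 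Integrating Lemma \ref{cci} against the Green kernel $\pi g_r(o,x)$ over $D(r)$ converts the current inequality into an inequality of characteristic functions; with $m\le d$ the maximal length of iteration, this yields
\begin{equation*}
T_{f,A}(r)\le \frac{m+1}{2}T_{f',\mathscr O(1)}(r)+\frac1m\sum_{q=1}^{m}q\,T_{f'}(r,dd^c\log\phi_q)+O(1)\le \frac{d+1}{2}T_{f',\mathscr O(1)}(r)+\sum_{q=1}^{m}T_{f'}(r,dd^c\log\phi_q)+O(1),
\end{equation*}
where $T_{f'}(r,dd^c\log\phi_q)$ denotes the integral of $\pi g_r(o,x)\,f'^*dd^c\log\phi_q$ over $D(r)$. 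Here one uses that $\det G=\bigotimes_{q=0}^m G^{d-q,q}$ carries a metric of nonpositive curvature (so its characteristic function contributes a nonpositive term) together with the inequality $(m+1)/m\le 2$.

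Next I would control the terms $T_{f'}(r,dd^c\log\phi_q)$. Since each $\phi_q$ is a bounded function on $\mathbb P(T_X(-\log D))$ whose zero set is discrete (a divisor), the positive part of $f'^*dd^c\log\phi_q$ is the counting function of a divisor, and the total mass of this divisor is bounded by $O(T_{f',\mathscr O(1)}(r))$ plus a bounded pullback from $X$, hence by $O(T_{f,A}(r))$ up to $O(1)$; after integrating against $g_r(o,x)$ and applying the Calculus Lemma (Theorem \ref{cal}), or more directly the First Main Theorem bookkeeping, this contributes only $O(\log T_{f,A}(r)-\kappa(r)r^2+\log^+\log r)$. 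The key input is that $\log|\phi_q|$ is bounded above, so applying Dynkin's formula to $\log^+|\phi_q\circ f'|$ (handled as in the treatment of $C$ in Lemma \ref{ldl1}, using subharmonicity away from the level set) gives $m_{f'}(r, \phi_q\text{-divisor})$ bounded below by a constant, and the First Main Theorem for the divisor absorbs the rest. At this stage I would feed in the tautological inequality Theorem \ref{tau} with $\mathscr A=A$:
\begin{equation*}
T_{f',\mathscr O(1)}(r)+N_{f,\mathrm{Ram}(D)}(r)\le_{\mathrm{exc}} N^{[1]}_f(r,D)+O\big(\log^+ T_{f,A}(r)-\kappa(r)r^2+\log^+\log r\big),
\end{equation*}
and use $N_{f,\mathrm{Ram}(D)}(r)\ge 0$ to drop it.

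Putting these together gives
\begin{equation*}
T_{f,A}(r)\le_{\mathrm{exc}}\frac{d+1}{2}N^{[1]}_f(r,D)+O\big(\log T_{f,A}(r)-\kappa(r)r^2+\log^+\log r\big),
\end{equation*}
and the Poincar\'e-disc refinement follows verbatim by invoking the second (sharper) conclusions of Theorem \ref{tau} and Theorem \ref{ldl2}, where $-\kappa(r)r^2$ is replaced by $r$. I expect the main obstacle to be the careful justification that the defect terms $T_{f'}(r,dd^c\log\phi_q)$ coming from the degeneracy loci of the pseudo-metrics $F_q$ are genuinely absorbed into the error term rather than into the main term: one must verify that $\phi_q$ is bounded \emph{above} (so its $-\log$ is bounded below and Dynkin's formula applies in the one-sided way used for $\log^+\|\mathfrak X\|$ in Lemma \ref{ldl1}), and that the associated divisor on $\mathbb P(T_X(-\log D))$ pulls back under $f'$ to something whose counting function is $O(T_{f,A}(r))$; this is exactly where the structure of the Viehweg--Zuo construction recalled in Section 5.1 is needed, and it is the only place where the argument is not a formal consequence of the earlier sections.
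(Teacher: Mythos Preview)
Your overall architecture is exactly the paper's: integrate Sun's curvature current inequality (Lemma \ref{cci}) against $\pi g_r(o,x)$ to obtain
\[
T_{f,A}(r)\le \frac{m+1}{2}\,T_{f',\mathscr O(1)}(r)+\frac{1}{m}\sum_{q=1}^m q\cdot\pi\!\int_{D(r)} g_r(o,x)\,f'^*dd^c\log\phi_q,
\]
replace $\frac{m+1}{2}$ by $\frac{d+1}{2}$ (using $m\le d$), and then feed in Theorem \ref{tau}. That part is fine, and so is the Poincar\'e-disc refinement.

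The gap is in your treatment of the $\phi_q$ terms. Your proposed route---split off the zero divisor of $\phi_q$, bound its counting function by $O(T_{f,A}(r))$, and then invoke the Calculus Lemma or First Main Theorem to land in $O(\log T_{f,A}(r)-\kappa(r)r^2+\log^+\log r)$---does not work as written: a counting function of size $O(T_{f,A}(r))$ cannot be reduced to $O(\log T_{f,A}(r))$ by either tool, so that step would leave an unacceptable $O(T_{f,A}(r))$ on the right-hand side. Likewise, the side remark about $\det G$ having nonpositive curvature and $(m+1)/m\le 2$ is misplaced: those facts are inputs to Sun's proof of Lemma \ref{cci}, not something you need to reapply here.

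The fix is much simpler than what you attempt, and you nearly say it yourself in your final paragraph. Since each $\phi_q$ is \emph{bounded above}, a single application of Dynkin's formula (Coarea + Dynkin, exactly as in Section 2.1) gives
\[
\pi\!\int_{D(r)} g_r(o,x)\,f'^*dd^c\log\phi_q
=\tfrac12\,\mathbb E_o\big[\log\phi_q\circ f'(X_{\tau_r})\big]+O(1)\le O(1).
\]
No counting-function bookkeeping, no Calculus Lemma, and no $-\kappa(r)r^2$ contribution from these terms at all---they are genuinely $O(1)$. With this in place, the combination with Theorem \ref{tau} yields the theorem immediately, which is exactly how the paper proceeds.
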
 
\begin{proof} By lifting $f$ to the (analytic) universal covering, we may assume that $S$ is simply connected. 
By Lemma \ref{cci}, it  follows immediately that 
\begin{eqnarray*}
-\frac{d+1}{2}T_{f',\mathscr O(1)}(r)&\leq&-\frac{m+1}{2}T_{f',\mathscr O(1)}(r) \\
&\leq& -T_{f,A}(r)+\frac{\pi}{m}
\sum_{q=1}^mq\int_{D(r)}g_r(o,x)dd^c\log\phi_q\circ f'.
\end{eqnarray*}
Since $\phi_1,\cdots, \phi_q$ are bounded, then it yields  from Coarea formula and Dynkin formula that 
\begin{eqnarray*}
\pi\int_{D(r)}g_r(o,x)dd^c\log\phi_q\circ f' &=&
\frac{1}{4}\mathbb E_o\left[\int_0^{\tau_r}\Delta_S\log\phi_q\circ f'(X_t)dt\right]\\
&=& \frac{1}{2}\mathbb E_o\left[\log\phi_q\circ f'(X_{\tau_r})\right]+O(1) \\
&\leq& O(1).
\end{eqnarray*}
Combining the above  with Theorem \ref{tau},  we can prove the theorem.
\end{proof}

\begin{cor}\label{cor3}  Assume the same conditions as in  Theorem $\ref{main}$.
Let $f:S\rightarrow X$ be a  holomorphic curve  ramifying over $D$ with multiplicity $c>(d+1)/2.$  If $f$ satisfies the growth condition
$$\liminf_{r\rightarrow\infty}\frac{\kappa(r)r^2}{T_{f,A}(r)}=0,$$
where $\kappa$ is defined by $(\ref{kappa}),$ then $f(S)$ 
is contained in $D.$  More precisely, if $S$ is the Poincar\'e disc $($take $o$  as the  center of disc$)$,  then $f(S)$ 
is contained in $D$ provided that  
$$\limsup_{r\rightarrow\infty}\frac{r}{T_{f,A}(r)}=0.$$
\end{cor}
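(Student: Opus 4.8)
\emph{Proof proposal.} The plan is to argue by contradiction via the Second Main Theorem of Theorem~\ref{main}. If $f$ is constant its image is a single point, which is either contained in $D$ (nothing to prove) or makes the hypotheses vacuous, so we may assume $f$ is nonconstant and suppose, for contradiction, that $f(S)$ is not contained in ${\rm Supp}\,D$. Then Theorem~\ref{main} applies and yields
\[
T_{f,A}(r)\leq_{\rm exc}\frac{d+1}{2}N^{[1]}_f(r,D)+O\Big(\log T_{f,A}(r)-\kappa(r)r^2+\log^+\log r\Big).
\]

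The next step is to turn the ramification hypothesis into a bound on $N^{[1]}_f(r,D)$. The condition $f^*D\geq c\cdot{\rm Supp}\,f^*D$ means every point of $f^{-1}({\rm Supp}\,D)$ appears in $f^*D$ with multiplicity at least $c$, so $N_f(r,D)\geq c\,N^{[1]}_f(r,D)$. The First Main Theorem (Theorem~\ref{fmt}) gives $N_f(r,D)\leq T_{f,\mathscr O_X(D)}(r)+O(1)$, and the ampleness of $A$ gives $T_{f,\mathscr O_X(D)}(r)=O(T_{f,A}(r))$; in the normalization of $A$ carried by Lemma~\ref{cci} (the Viehweg determinant bundle dominating the boundary) this comparison can be taken with constant $1$, whence $N^{[1]}_f(r,D)\leq\tfrac1c\,T_{f,A}(r)+O(1)$. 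Inserting this into the Second Main Theorem gives
\[
\Big(1-\frac{d+1}{2c}\Big)T_{f,A}(r)\leq_{\rm exc}O\Big(\log T_{f,A}(r)-\kappa(r)r^2+\log^+\log r\Big),
\]
where the left-hand coefficient is strictly positive since $c>(d+1)/2$.

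Finally I would invoke the growth hypothesis. Because $\kappa\leq0$, the assumption $\liminf_{r\to\infty}\kappa(r)r^2/T_{f,A}(r)=0$ produces a sequence $r_n\to\infty$ with $-\kappa(r_n)r_n^2=o\big(T_{f,A}(r_n)\big)$; also $\log T_{f,A}(r)=o\big(T_{f,A}(r)\big)$, and $\log^+\log r$ is negligible — it is absorbed by $-\kappa(r)r^2$ when $\kappa\not\equiv0$ (as in the proof of the corollary following Theorem~\ref{thm}), while if $\kappa\equiv0$ the universal cover is $\mathbb C$ and the lower bound $T_{f,A}(r)\geq_{\rm exc}O(\log r)$ for nonconstant curves applies, forcing $T_{f,A}(r)\to\infty$ in any case. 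Evaluating the displayed inequality along $r_n$ then gives $\big(1-\tfrac{d+1}{2c}-o(1)\big)T_{f,A}(r_n)\leq_{\rm exc}O(\log T_{f,A}(r_n))$, which is impossible; hence $f(S)\subseteq{\rm Supp}\,D\subseteq D$. The Poincar\'e disc case is identical, using the sharper error $O(\log T_{f,A}(r)+r)$ of Theorem~\ref{main} together with $\limsup_{r\to\infty}r/T_{f,A}(r)=0$, which again forces $T_{f,A}(r)\to\infty$ and makes $r$ and $\log T_{f,A}(r)$ negligible.

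The main obstacle I anticipate is the comparison $N^{[1]}_f(r,D)\leq\tfrac1c\,T_{f,A}(r)+O(1)$ with exactly this constant: it is the positivity of the Viehweg line bundle $A$ relative to the boundary $D$ that makes $(d+1)/2$ the correct threshold, and pinning it down requires controlling how $A$ extends across $D$ (for the Siegel varieties of Theorem~D this goes through explicit relations between the Hodge line bundle and the boundary). A lesser point is verifying that the additive errors $\log T_{f,A}(r)$ and $\log^+\log r$ are truly $o\big(T_{f,A}(r)\big)$ along $(r_n)$, which rests only on $T_{f,A}(r)\to\infty$ — guaranteed by either growth hypothesis, the flat case being reduced to $\mathbb C$ by passing to the universal cover.
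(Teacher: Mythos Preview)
Your proposal is correct and follows essentially the same route as the paper: argue by contradiction, apply Theorem~\ref{main}, convert the ramification hypothesis into $cN^{[1]}_f(r,D)\leq N_f(r,D)$, bound the latter via the First Main Theorem by $T_{f,A}(r)$, and then use the growth hypothesis (and the reduction to $\mathbb C$ when $\kappa\equiv0$) to make the error terms negligible and force the contradiction $2c/(d+1)\leq 1$. The paper's proof is terser but relies on exactly the same implicit comparison $N_f(r,D)\leq T_{f,A}(r)+O(1)$ that you single out as the main obstacle.
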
 
\begin{proof}  Set $b=2c/(d+1),$ then $b>1.$ By contradiction,  we assume that $f(S)$ 
is not contained in $D.$
By the first conclusion of Theorem \ref{main} and condition $f^*D\geq c\cdot{\rm {Supp}}f^*D,$ we obtain
\begin{eqnarray*}
bT_{f,A}(r) &\leq_{\rm exc}&
cN^{[1]}_f(r,D)+
O\Big(\log T_{f, A}(r)-\kappa(r)r^2+\log^+\log r\Big) \\
&\leq& N_f(r,D)+
O\Big(\log T_{f, A}(r)-\kappa(r)r^2+\log^+\log r\Big). 
\end{eqnarray*}
If $\kappa(r)\not\equiv0,$  by First Main Theorem, then  the above inequality  implies   that  $b\leq 1,$ it is a contradiction.
If $\kappa(r)\equiv0,$
 we can regard $S$ as $\mathbb C$ by lifting $f$ to the universal covering, then we obtain $T_{f,A}(r)\geq_{\rm exc} O(\log r)$ (see \cite{Noguchi}). 
But, it still contradicts with $b>1.$  Therefore, the first conclusion of the corollary holds.  The second conclusion of the corollary is similarly proved.
\end{proof}

\begin{cor}\label{cor1} Assume the same conditions as in Theorem $\ref{main}$. Then 
for any holomorphic curve  $f:S\rightarrow X$ whose image is not contained in ${\rm{Supp}}D,$
we have 
\begin{eqnarray*}
T_{f,K_X(D)}(r) &\leq_{\rm exc}&
\frac{k(d+1)}{2}N^{[1]}_f(r,D)+
O\Big(\log T_{f, A}(r)-\kappa(r)r^2+\log^+\log r\Big)
\end{eqnarray*}
 for an   integer $k$ such that  $A^{\otimes k}\geq K_X(D),$ where $\kappa$ is defined by $(\ref{kappa}).$ More precisely, if $S$ is the Poincar\'e disc $($take $o$  as the center of disc$)$,  then
 \begin{eqnarray*}
T_{f,K_X(D)}(r) &\leq_{\rm exc}&
\frac{k(d+1)}{2}N^{[1]}_f(r,D)+
O\Big(\log T_{f, A}(r)+r\Big)
\end{eqnarray*}
 for an   integer $k$ such that  $A^{\otimes k}\geq K_X(D).$ 
\end{cor}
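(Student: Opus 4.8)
The plan is to deduce Corollary \ref{cor1} from the Second Main Theorem (Theorem \ref{main}) by a purely formal comparison of characteristic functions, using the additivity of $T_{f,\bullet}(r)$ in the line bundle together with the positivity encoded in the First Main Theorem (Theorem \ref{fmt}).

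First I would fix an integer $k$, depending only on $(X,D)$ and the ample line bundle $A$ and not on $f$, such that $\mathscr L:=A^{\otimes k}\otimes K_X(D)^{-1}$ is effective; enlarging $k$ if need be, I may even take $\mathscr L$ very ample, and then choose a divisor $E\in|\mathscr L|$ with $f(o)\notin{\rm Supp}\,E$. Equip $\mathscr O_X(E)\cong\mathscr L$ with a Hermitian metric normalized so that $\|s_E\|\le 1$, whence the Weil function $\lambda_E\ge 0$ up to a bounded term and $m_f(r,E)\ge -O(1)$. The First Main Theorem gives $T_{f,\mathscr L}(r)=T_{f,\mathscr O_X(E)}(r)=m_f(r,E)+N_f(r,E)+O(1)\ge -O(1)$, and additivity of the characteristic function then yields
$$T_{f,K_X(D)}(r)=k\,T_{f,A}(r)-T_{f,\mathscr L}(r)+O(1)\le k\,T_{f,A}(r)+O(1).$$

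Next I would substitute the bound of Theorem \ref{main}, $T_{f,A}(r)\le_{\rm exc}\frac{d+1}{2}N^{[1]}_f(r,D)+O\big(\log T_{f,A}(r)-\kappa(r)r^2+\log^+\log r\big)$, into the previous display and multiply through by the constant $k$. Since $k$ is independent of $r$, one has $k\cdot O\big(\log T_{f,A}(r)-\kappa(r)r^2+\log^+\log r\big)=O\big(\log T_{f,A}(r)-\kappa(r)r^2+\log^+\log r\big)$, so the two estimates combine to give
$$T_{f,K_X(D)}(r)\le_{\rm exc}\frac{k(d+1)}{2}N^{[1]}_f(r,D)+O\big(\log T_{f,A}(r)-\kappa(r)r^2+\log^+\log r\big),$$
which is the first assertion of the corollary. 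For the Poincar\'e disc the argument is identical, invoking instead the second (Poincar\'e) assertion of Theorem \ref{main} and replacing $-\kappa(r)r^2$ by $r$ throughout.

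The argument is elementary, so I do not anticipate a genuine obstacle. The only two points that merit a line of care are: (i) that the comparison integer $k$ is chosen uniformly in $f$ --- immediate, since it is specified solely in terms of $A$ and $K_X(D)$, the divisor $E$ alone being allowed to depend on $f(o)$; and (ii) that multiplying the Theorem \ref{main} error term by the constant $k$ does not alter its shape, which holds because that term is a sum of $\log^+$-type and curvature contributions, each stable under multiplication by a positive constant inside $O(\cdot)$.
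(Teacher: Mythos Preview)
Your proof is correct and follows essentially the same approach as the paper: the paper's argument is the two-line observation that $A^{\otimes k}\ge K_X(D)$ implies $T_{f,K_X(D)}(r)\le k\,T_{f,A}(r)+O(1)$, after which Theorem~\ref{main} gives the result. You have simply spelled out in detail why that characteristic-function comparison holds (via an effective divisor in $|A^{\otimes k}\otimes K_X(D)^{-1}|$ and the First Main Theorem), which the paper takes for granted.
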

\begin{proof} 
Since $A^{\otimes k}\geq K_X(D),$ then 
we see that    $T_{f,K_X(D)}(r)\leq k T_{f,A}(r)+O(1).$   Therefore,  the corollary follows from 
Theorem \ref{main}.
\end{proof}

Let us consider  Siegel modular varieties \cite{Sig}. 
A Siegel modular variety  is a moduli space of principally polarized Abelian varieties of a fixed dimension.  Exactly speaking, the Siegel modular variety  
$\mathcal A_g$  parametrizes the principally polarized Abelian varieties of dimension $g$, which can be constructed as the complex analytic spaces (constructed as the quotient of  the Siegel upper half-space of degree $g$ by the action of a symplectic group). 
Refer  to \cite{H-S, Tai},   
 $\mathcal A_g$ has dimension $g(g+1)/2,$  and is of general type for $g\geq7.$ 
A Siegel modular variety $\mathcal A_g^{[n]},$ which 
parametrizes the principally polarized Abelian varieties of dimension $g$ with  level-$n$ structure, 
 arises as the quotient of   Siegel upper half-space by the action of the principal congruence subgroup of level-$n$ of a symplectic group.

\begin{theorem}\label{mm} Let $\mathcal A_g^{[n]}$ $(n\geq 3)$ be the moduli space of principally polarized Abelian 
varieties with level-$n$  structure. 
Let $\overline{\mathcal A}_g^{[n]}$ be the  smooth compactification of $\mathcal A_g^{[n]}$ such that  $D=\overline{\mathcal A}_g^{[n]} \setminus\mathcal A_g^{[n]}$
is a normal crossing $($boundary$)$ divisor.  
For  any holomorphic curve $f:S\rightarrow \overline{\mathcal A}_g^{[n]}$  whose image is  not  contained in ${\rm{Supp}}D,$ we have  
\begin{eqnarray*}
T_{f,K_{\overline{\mathcal A}_g^{[n]}}(D)}(r) &\leq_{\rm exc}&
\frac{(g+1)^2}{2}N^{[1]}_f(r,D)+
O\Big(\log T_{f, A}(r)-\kappa(r)r^2+\log^+\log r\Big),
\end{eqnarray*}
where  $\kappa$ is defined by $(\ref{kappa}).$ More precisely, if $S$ is the Poincar\'e disc $($take $o$  as the center of disc$)$,  then
\begin{eqnarray*}
T_{f,K_{\overline{\mathcal A}_g^{[n]}}(D)}(r) &\leq_{\rm exc}&
\frac{(g+1)^2}{2}N^{[1]}_f(r,D)+
O\Big(\log T_{f, A}(r)+r\Big).
\end{eqnarray*}
\end{theorem}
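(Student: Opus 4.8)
The plan is to deduce Theorem \ref{mm} from the Second Main Theorem (Theorem \ref{main}, i.e.\ Theorem~A) together with Corollary \ref{cor1} (Theorem~C), applied to the universal abelian scheme over $\mathcal A_g^{[n]}$. Two inputs are required: first, that the hypotheses of Theorem \ref{main} are met by this family, with fiber dimension $d=g$; second, the identification of the ample line bundle $A$ of Lemma \ref{cci} with a power of the Hodge line bundle, so that Mumford's formula for the log-canonical bundle of a Siegel modular variety forces the admissible integer in Corollary \ref{cor1} to be $k=g+1$, producing the coefficient $\tfrac{(g+1)^2}{2}$.

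First I would set up the family. For $n\ge 3$ the moduli space $\mathcal A_g^{[n]}$ is a smooth quasi-projective fine moduli space (the level-$n$ structure kills automorphisms), hence carries a universal principally polarized abelian scheme $\psi\colon V\to U:=\mathcal A_g^{[n]}$ with $g$-dimensional fibers. Each fiber is an abelian variety, so has trivial canonical bundle, which is in particular semi-ample; and the induced classifying map $U\to\mathcal M_h$ into the moduli scheme of polarized varieties with the appropriate Hilbert polynomial $h$ is quasi-finite, indeed finite onto its image, $\mathcal A_g$ being the coarse moduli space. Thus the hypotheses of Theorem \ref{main} hold with $d=g$, and for any holomorphic curve $f\colon S\to\overline{\mathcal A}_g^{[n]}$ with $f(S)\not\subseteq{\rm Supp}\,D$ we get
\[
T_{f,A}(r)\ \leq_{\rm exc}\ \frac{g+1}{2}\,N^{[1]}_f(r,D)+O\!\Big(\log T_{f,A}(r)-\kappa(r)r^2+\log^+\log r\Big),
\]
with the sharpened error $O(\log T_{f,A}(r)+r)$ when $S$ is the Poincar\'e disc.

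Next I would identify $A$ and invoke Corollary \ref{cor1}. For an abelian scheme one has $\psi_*\omega_{V/U}=\det\psi_*\Omega^1_{V/U}=:\lambda$, the Hodge line bundle, and the same identity persists for all powers, so $\det(\psi_*\omega^\nu_{V/U})^{\otimes\mu}=\lambda^{\otimes\nu\mu}$; hence the Viehweg line bundle of Lemma \ref{cci} is a positive power of $\lambda$, and $A$ may be taken to be the natural (ample, resp.\ nef-and-big) extension of $\lambda$ over $X=\overline{\mathcal A}_g^{[n]}$. The decisive classical input is Mumford's formula for the toroidal compactification, $K_{\overline{\mathcal A}_g^{[n]}}(D)\cong\lambda^{\otimes(g+1)}$, valid integrally since $n\ge 3$. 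Consequently $A^{\otimes(g+1)}\ge K_X(D)$, and Corollary \ref{cor1} applies with $k=g+1$ and $d=g$, giving coefficient $\tfrac{k(d+1)}{2}=\tfrac{(g+1)^2}{2}$; the Poincar\'e-disc statement comes from the corresponding half of Corollary \ref{cor1}.

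I expect the main obstacle to be the line-bundle bookkeeping on the compactification: reconciling the ampleness hypothesis of Theorem \ref{main} (and Lemma \ref{cci}) with the fact that $\lambda$ is only nef and big — not ample — on a toroidal compactification, while being ample on $\mathcal A_g^{[n]}$ itself and on the Baily--Borel compactification, and checking that the boundary twist relating an honestly ample $A$ to $\lambda$ is effective, so that the clean value $k=g+1$ (rather than some larger $k$) is indeed admissible. One must also ensure Mumford's formula is invoked in its integral form available for $n\ge 3$, and that passing from $\mathcal A_g$ to the level cover preserves both this formula and the quasi-finiteness of the classifying map; these are standard but need to be stated with care.
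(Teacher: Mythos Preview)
Your proposal is correct and follows essentially the same route as the paper: the paper's own proof simply says to ``carry the arguments of Sun [\cite{sun}, Corollary 4.3] and use Corollary \ref{cor1}'', and what you have written is precisely an unpacking of Sun's argument (universal abelian scheme over $\mathcal A_g^{[n]}$ with $d=g$, identification of the Viehweg bundle with a power of the Hodge line bundle $\lambda$, and Mumford's isomorphism $K_{\overline{\mathcal A}_g^{[n]}}(D)\cong\lambda^{\otimes(g+1)}$) followed by Corollary \ref{cor1} with $k=g+1$. Your flagged obstacle about the ampleness of $\lambda$ on a toroidal compactification versus the hypothesis of Lemma \ref{cci} is exactly the technical point that the paper delegates to Sun's treatment.
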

\begin{proof}
We just need to carry the arguments of Sun (\cite{sun}, Corollary 4.3) and use Corollary \ref{cor1}, 
 then  the  theorem can be  proved.
\end{proof}

\noindent\textbf{Acknowledgement.} The author is  very grateful to   
Prof. Songyan Xie and Dr. Ruiran Sun for  the valuable  discussions with them, and grateful  to  Dr. Yan He for his useful suggestions. 
The author also thanks  referee for his/her valuable comments and suggestions. 
\vskip\baselineskip

\label{lastpage-01}

\vskip\baselineskip
\vskip\baselineskip


\begin{thebibliography}{99}
\vskip\baselineskip
\bibitem{at} Atsuji A.: A second main theorem of Nevanlinna theory for meromorphic functions on complete K\"ahler manifolds, J. Math. Japan Soc. \textbf{60}(2008), 471-493.
\bibitem{atsuji} Atsuji A.: Nevanlinna-type theorems for meromorphic functions on non-positively curved K\"ahler manifolds, Forum Math. (2018), 171-189.
\bibitem{A-R}  Adler A. and  Ramanan S.:  Moduli of Abelian Varieties, Lecture Notes in Mathematics, Springer-Verlag, Berlin Heidelberg, (1996).
\bibitem{bass} Bass R.: Probabilistic Techniques in Analysis, Springer, New York, (1995).
\bibitem{carne} Carne T.K.: Brownian motion and Nevanlinna theory, Proc. London Math. Soc. (3) \textbf{52}(1986), 349-368.
\bibitem{Deb} Debiard A., Gaveau B. and Mazet E.: Theorems de comparaison en geometrie Riemannienne, Pub. Res. Inst. Math. Sci. Kyoto, \textbf{12}(1976), 390-425; \textbf{111}(2019), 303-314.
\bibitem{dong} Dong X., He Y. and Ru M.: Nevanlinna theory through the Brownian motion, Sci. China Math. \textbf{62}(2019), 2131-2154.
\bibitem{cc}  Gunning R. and Narasimhan R.: Immersion of Open Riemann Surfaces, Math. Annalen \textbf{174}(1967),103-108.
\bibitem{13} Hsu E.: Stochastic Analysis on Manifolds, Grad. Stud. Math. 38, Amer. Math. Soc., Providence, (2002).
\bibitem{Hit} Hitchin N.: The self-duality equations on a Riemann surface, Proc. London Math. Soc. (3) \textbf{55}(1987), 59-126.
\bibitem{HM} Harris J. and Morrison I.:  Moduli of curves, Graduate Texts in Mathematics, Springer-Verlag, \textbf{197}(1998).
\bibitem{H-S} Hulek K. and Sankaran G.:  The Geometry of Siegel Modular Varieties,  Higher Dimensional Birational Geometry, Advanced Studies in Pure Mathematics. \textbf{}35(2002), 89-156.
\bibitem{NN} Ikeda N. and Watanabe S.: Stochastic Differential Equations and Diffusion Processes, 2nd edn. North-Holland Mathematical Library, North-Holland, Amsterdam, \textbf{24}(1989).
\bibitem{HR} He Y. and Ru M.: Nevanlinna and algebraic hyperbolicity, Intern. J. Math. (2021), online. 
\bibitem{itoo} It${\rm{\hat{o}}}$ K. and McKean Jr H P.: Diffusion Processes and Their sample Paths, Academic Press, New York,  (1965).
\bibitem{MQ} McQuillan M.: Diophantine approximations and foliations, Pub. Math. IHES  \textbf{87}(1998), 121-174.
\bibitem{nev} Nevanlinna R.: Zur Theorie der meromorphen Funktionen, Acta Math. \textbf{46}(1925), 1-99.
\bibitem{Noguchi} Noguchi J. and Winkelmann J.: Nevanlinna theory in several complex variables and Diophantine approximation,  A series of comprehensive studies in mathematics, Springer, (2014).
\bibitem{Sibony} P${\rm{\breve{a}}}$un M. and Sibony N.: Value distribution theory for parabolic Riemann surfaces, arXiv: 1403.6596.
\bibitem{popp}  Popp H.: Moduli Theory and Classification Theory of Algebraic Varieties, Lecture Notes in Mathematics, Springer-Verlag Berlin Heidelberg, (1977).
\bibitem{ru} Ru M.: Nevanlinna theory and its relation to diophantine approximation, World Scientific Publishing, (2001).
\bibitem{ru00}  Ru M.: Holomorphic curves into algebraic varieties. Ann.  Math. \textbf{169}(2009), 255-267.
\bibitem{Ru-Sibony} Ru M. and Sibony N.: The second main theorem in the hyperbolic case, Math. Ann.  \textbf{377}(2020), 759-795.
\bibitem{Shabat} Shabat B.: Distribution of Values of Holomorphic Mappings, Translations of Mathematical Monographs,
 \textbf{61}(1985).
\bibitem{Sig} Siegel C.:   Symplectic Geometry, Amer. J. Math. The Johns Hopkins University Press. \textbf{65}(1943), 1-86.
 \bibitem{sun} Sun R.: Second Main Theorem on the moduli spaces of polarized varieties, arXiv: 2008.01624.
 \bibitem{Tai} Tai Y.:  On the Kodaira dimension of the moduli space of abelian varieties, Invent. Math. (3) \textbf{68}(1982), 425-439.
\bibitem{vojta} Vojta P.: Diophantine approximation and Nevanlinna theory, Arithmetic geometry, Lecture Notes in Math.,  Springer, Berlin, \textbf{2009}(2011), 111-224.
\bibitem{vz4} Viehweg E.: Quasi-projective Moduli for Polarized Manifolds. Ergebnisse der Mathematik, Springer Verlag, Berlin-Heidelberg-New York, (3) \textbf{30}(1995).
\bibitem{vz1}  Viehweg E. and Zuo K.: Base spaces of non-isotrivial families of smooth minimal models, Complex geometry (G\"ottingen, 2000), Springer, Berlin, (2002), 279-328.
\bibitem{vz2} Viehweg E. and Zuo K.: On the Brody hyperbolicity of moduli spaces for canonically polarized manifolds, Duke Math. J. (1)  \textbf{118}(2003), 103-150.
\bibitem{wu} Wu H.:   The equidistribution theory of holomorphic curves,  Princeton University Press, (1970).




 \end{thebibliography}
\end{document}